\documentclass{article}

\usepackage[colorlinks, citecolor=blue]{hyperref}

\usepackage{amscd}
\usepackage{times}
\usepackage{latexsym}
\usepackage{amsmath}
\usepackage{amssymb}
\usepackage{dsfont}
\usepackage{mathrsfs}
\usepackage{epsfig}
\usepackage{graphicx,graphics}
\usepackage{pstricks}
\usepackage{pst-all}
\usepackage[body={15cm,23cm}, top=2cm]{geometry}

\newtheorem{theorem}{Theorem}[section]
\newtheorem{corollary}[theorem]{Corollary}
\newtheorem{conjecture}[theorem]{Conjecture}
\newtheorem{definition}[theorem]{Definition}
\newtheorem{remark}[theorem]{Remark}
\newtheorem{proposition}[theorem]{Proposition}
\newtheorem{lemma}[theorem]{Lemma}
\newtheorem{example}[theorem]{Example}

\newenvironment{proof}[1][Proof]{\noindent\textbf{Proof:} }{\hfill\rule{3mm}{3mm}}

\setlength{\parindent}{14.9pt} \setlength{\parskip}{4pt plus2pt
minus1pt} \setlength{\baselineskip}{20pt plus22pt minus1pt}
\setlength{\oddsidemargin}{0.5cm}\setlength{\evensidemargin}{0.5cm}

\pagestyle{plain}

\newcommand{\mt}{\mathscr{T}}
\newcommand{\rank}{\mathop{\mathrm{rank}}}
\newcommand{\Ker}{\mathop{\mathrm{Ker}}}
\renewcommand{\Im}{\mathop{\mathrm{Im}}}
\newcommand{\lev}{\mathop{\mathrm{lev}}}
\begin{document}
\title {Dimensions of Biquadratic and Bicubic Spline Spaces \\over Hierarchical T-meshes}
\author{Chao Zeng \qquad Fang Deng \qquad Xin Li \qquad Jiansong Deng \\
\small $^1$ School of Mathematical Sciences,
University of Science and Technology of China, Hefei, China, 230026\\
\small  Email: zengchao@mail.ustc.edu.cn}
\date{}
\maketitle

\emph{\textbf{Abstract}
This paper discusses the dimensions of biquadratic $C^1$ spline spaces and bicubic $C^2$ spline spaces
over hierarchical T-meshes using the smoothing cofactor-conformality method. We obtain the dimension formula of
biquadratic $C^1$ spline spaces over hierarchical T-meshes in a concise way. In addition, we provide a dimension formula
for bicubic $C^2$ spline spaces over hierarchical T-mesh with fewer restrictions than that in  the previous literature.
A dimension formula for bicubic $C^2$ spline spaces over a new type hierarchical T-mesh is also provided.}

\noindent \textbf{Keywords}:
Dimension, Spline spaces over T-meshes, Smoothing cofactor-conformality

\section{Introduction}
\label{Introduction}
Splines are a useful tool for representing functions and surface models, and Non-uniform Rational B-Splines (NURBS),
which are defined on the tensor product meshes, are the most popular splines in the industry.
However, due to the tensor product structure, local refinement of
NURBS is impossible; furthermore, NURBS models generally contain a large number of superfluous control points.
Therefore, many splines defined on T-meshes are developed.

There are four main types of splines that are defined on T-meshes.
Hierarchical B-splines~\cite{For88} are polynomial splines defined on hierarchical T-meshes.
The definition is improved in \cite{Vuo11}. More papers \cite{Gia12,Mok140,Mok14} are published
to discuss the completeness and the partition of unity.
T-splines~\cite{Sed03,Sed04}
are rational splines defined on T-meshes, and they are a piecewise rational polynomial on every cell of the meshes.
The blending functions of T-splines may be linearly dependent, which is adverse to the application in IGA.
Therefore, analysis-suitable T-splines are introduced in \cite{Li12}.
Polynomial splines over hierarchical T-meshes (PHT-splines)~\cite{Deng08} are developed directly from the spline spaces.
The bases of PHT-splines are linearly independent and form a partition of unity. The main drawback of
PHT-splines is that they are only $C^1$ continuous.
LR-splines~\cite{Dok13}, which are defined on a special type of T-meshes, are also polynomial splines.
LR-splines are defined with the help of the dimension formulae, and they are complete.
However, the linear independence can not be guaranteed.
Among all of these splines, one fundamental problem is to
understand the spline space of the piecewise polynomials of a given smoothness on
a T-mesh, which is called the spline space over a T-mesh.

The spline space over a T-mesh $\mathbf{S}(m, n, \alpha, \beta, \mathscr{T})$ is first introduced in~\cite{Deng06},
which is a bi-degree $(m, n)$ piecewise
polynomial spline space over a T-mesh $\mt$ with smoothness order
$\alpha$ and $\beta$ in two directions. When $m\geqslant 2\alpha+1$ and $n\geqslant 2\beta+1$, a dimension formula
has been given in~\cite{Deng06}, and the bases have been constructed in~\cite{Deng08}. However,
if we relax the constraints of $m\geqslant 2\alpha+1$ and $n\geqslant 2\beta+1$, there is not a general dimension formula.
In 2011, \cite{Li11} discovered
that the dimension of the associated spline space has instability over some particular T-meshes,
i.e., the dimension is associated not only with the topological information of the T-mesh but also
with the geometric information of the T-mesh. In additon, in~\cite{Ber12}, D. Berdinskya
et al. give two more examples of $\mathbf{S}(5,5,3,3,\mathscr{T})$ and $\mathbf{S}(4,4,2,2,\mathscr{T})$ for the instability of
dimensions. These results suggest that we should consider the dimension formula for the spline space over some
special T-meshes. For this purpose, weighted T-meshes~\cite{Mou14}, diagonalizable T-meshes~\cite{Lipre}, and
T-meshes for hierarchical B-spline~\cite{Gia13}, over which the dimensions are stable, are developed .
The present paper is interested in hierarchical T-meshes, which have a nature tree structure and have existed in
the finite element analysis community for a long time. For a hierarchical T-mesh, \cite{Deng13} derives a
dimension formula for biquadratic $C^1$ spline spaces. And \cite{Wu13}
provides the dimension formula for $\mathbf{S}(d,d,d-1,d-1,\mathscr{T})$ over a very special hierarchical
T-mesh using the homological algebra technique.

In this paper, we present three results:
\begin{enumerate}
\item We provide a concise proof of the dimension formula of $\mathbf{S}(2,2,1,1,\mathscr{T})$ over a hierarchical T-mesh.
\item We prove a dimension formula for $\mathbf{S}(3,3,2,2,\mathscr{T})$ over a hierarchical T-mesh with fewer restrictions
than that in the previous literature.
\item
We give a dimension formula for $\mathbf{S}(3,3,2,2,\mathscr{T})$ over a new type of hierarchical
T-mesh that splits each cell into $3\times3$ parts during the local refinement.
\end{enumerate}

\begin{figure}[!ht]
\begin{center}
\begin{pspicture}(0,0)(4,4)
\psline(0,0)(4,0)(4,4)(0,4)(0,0)
\psline(0,1)(4,1)\psline(0,2)(4,2)\psline(0,3)(4,3)
\psline(1,0)(1,4)\psline(2,0)(2,4)\psline(3,0)(3,4)
\psline(1,1.5)(3,1.5)\psline(1,2.5)(3,2.5)\psline(1.5,1)(1.5,3)\psline(2.5,1)(2.5,3)
\psline(1,1.25)(2,1.25)\psline(1,1.75)(2.5,1.75)\psline(1.5,2.25)(2.5,2.25)
\psline(1.25,1)(1.25,2)\psline(1.75,1)(1.75,2.5)\psline(2.25,1.5)(2.25,2.5)
\end{pspicture}
\caption{A hierarchical T-mesh. \label{exa-dim}}
\end{center}
\end{figure}
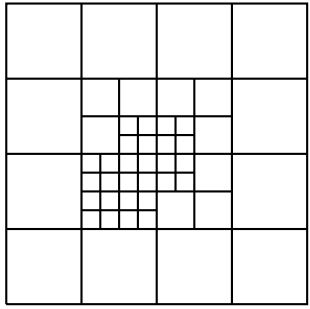

\cite{Lipre,Mok14,Wu13} also discuss the dimensions of $\mathbf{S}(3,3,2,2,\mathscr{T})$. For \cite{Lipre,Wu13},
their conclusions are aimed at diagonalizable T-meshes or weighted T-meshes. In this paper, the T-meshes we considered
may be not of these types. See Figure \ref{exa-dim} for an example.
For \cite{Mok14}, the authors present a new type of splines, which is called TDHB-splines.
The idea of this paper is to construct the bases of TDHB-splines first, then to prove the completeness of the bases.
That is to say, the dimension of the spline space has been obtained, which is the number of the bases.
For $\mathbf{S}(3,3,2,2,\mathscr{T})$, their conclusion is our third result; moreover,
they do not derive an explicit dimension formula.

The rest of the paper is organized as follows. In Section 2, we review the definitions and some results regarding T-meshes
and spline spaces over T-meshes. In Section 3, the smoothing cofactor-conformality method is explored. In section 4, we present
some conclusions about homogeneous boundary conditions. We discuss the dimensions
of $\mathbf{S}(2,2,1,1,\mathscr{T})$ over hierarchical T-meshes in Section 5 and the dimensions
of $\mathbf{S}(3,3,2,2,\mathscr{T})$ over hierarchical
T-meshes in Section 6. Section 7 ends the paper with conclusions and future work.

\section{T-Meshes and Spline Spaces}

A T-mesh is a rectangular grid that allows T-junctions.

\begin{definition}[\cite{Deng06,Deng13}]
Suppose $\mathscr{T}$ is a set of axis-aligned rectangles and the intersection of any two distinct rectangles in $\mathscr{T}$ either is
empty or consists of points on the boundaries of the rectangles. Then, $\mathscr{T}$ is called a \textbf{T-mesh}. Furthermore, if the entire
domain occupied by $\mathscr{T}$ is a rectangle, $\mathscr{T}$ is called a \textbf{regular T-mesh}.
If some edges of $\mathscr{T}$ also form a T-mesh $\mathscr{T}'$, $\mathscr{T}'$ is called a \textbf{submesh} of $\mathscr{T}$.
\end{definition}
In this paper, the T-meshes that we consider are regular, and we adopt the definitions of vertex, edge, and cell provided
in~\cite{Deng06}. T-meshes are allowed to have \textbf{T-junctions}, or \textbf{T-nodes},
which are the vertex of one rectangle that lies in the interior of an edge of another rectangle.
A valence 4 vertex is called a \textbf{crossing-vertex}.

Vertices, edges and cells can be divided into two parts. If a vertex is on the boundary grid
line of the T-mesh, it is called a \textbf{boundary vertex}.
Otherwise, it is called an \textbf{interior vertex}. There are two types of interior vertices: T-junctions and crossing-vertices.
If an edge is on the boundary of the T-mesh,
it is called a \textbf{boundary edge}; otherwise, it is
called an \textbf{interior edge}. A cell is called an \textbf{interior cell} if all its edges are interior edges; otherwise, it is called
a \textbf{boundary cell}.
An \textbf{l-edge} is a line segment that consists of several edges. It is the longest
possible line segment, the interior edges of which are connected and the two
end points are T-junctions or boundary vertices. If an l-edge is comprised of some boundary edges, the l-edge is called a \textbf{boundary l-edge};
otherwise, it is called an \textbf{interior l-edge}. There are three types of interior l-edges.
If the two end points of an interior l-edge are both T-junctions, the l-edge is called a \textbf{T l-edge}.
If the two end points of an interior l-edge are both boundary vertices, the l-edge is called a \textbf{cross-cut}.
If one end point is a boundary vertex and the other one is an interior vertex, the l-edge is called
a \textbf{ray}.

\begin{figure}[!ht]
\begin{center}
\psset{unit=0.75cm,linewidth=0.8pt}
\begin{pspicture}(-0.5,-0.5)(5.5,6.5)
{\psset{linewidth=0.01pt}
\psframe[fillstyle=solid,fillcolor=lightgray](0,0)(1,1)
\psframe[fillstyle=solid,fillcolor=lightgray](3,3)(4,5)}
\psline(0,0)(5,0)(5,6)(0,6)(0,0)
\psline(0,1)(5,1)\psline(1,2)(4,2)\psline(3,3)(4,3)\psline(1,4)(3,4)\psline(0,5)(5,5)
\psline(1,0)(1,6)\psline(2,1)(2,5)\psline(3,1)(3,6)\psline(4,0)(4,6)
\psdots(0,0)\rput(-0.4,-0.3){$v_7$}\psdots(5,0)\rput(5.4,-0.3){$v_8$}
\psdots(1,2)\rput(0.6,2){$v_5$}\psdots(4,2)\rput(4.4,2){$v_6$}
\psdots(3,1)\rput(3,0.6){$v_4$}\psdots(3,6)\rput(3,6.4){$v_3$}
\psdots(0,5)\rput(-0.4,5){$v_1$}\psdots(5,5)\rput(5.4,5){$v_2$}
\psdots(2,4)\rput(1.7,3.7){$v_9$}\psdots(3,2)\rput(2.7,2.3){$v_{10}$}
\rput(0.5,0.5){$c_1$}\rput(3.5,4){$c_2$}
\end{pspicture}
\caption{A regular T-mesh. \label{T-mesh}}
\end{center}
\end{figure}
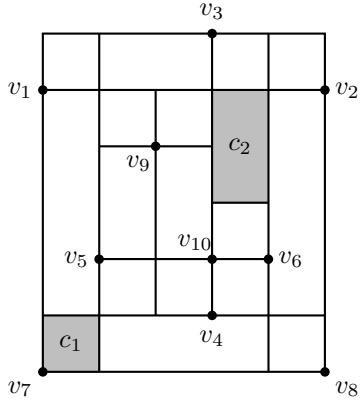
In Figure \ref{T-mesh}, $v_1,v_2,v_3,v_7$ and $v_8$ are boundary vertices, $v_9$ and $v_{10}$ are crossing-vertices, and $v_4,v_5$ and $v_6$
are T-junctions; $c_1$ is a boundary cell, and $c_2$ is an interior cell;
the l-edge between $v_1$ and $v_2$ is a cross-cut, the l-edge between $v_3$ and $v_4$ is a ray, the l-edge between $v_5$ and $v_6$ is a T l-edge, and the l-edge between $v_7$ and $v_8$ is a boundary l-edge.

\subsection{Hierarchical T-meshes}\label{hierarchy}

A hierarchical T-mesh \cite{Deng08} is a special type of T-mesh
that has a natural level structure. It is defined recursively.
Generally, we start from a tensor-product mesh (level
$0$), the elements (vertices, edges and cells) of which are called level $0$ elements. Then, some cells at level $k$ are
each divided into $m\times n$ subcells equally, where the new vertices, the new edges and the new cells are of level $k+1$. The resulting T-mesh
is  called \textbf{a hierarchical T-mesh of $m \times n$ division}.
In the following, the default division of a hierarchical T-mesh is $2\times 2$.
Figure \ref{HTmesh} illustrates the process of generating a hierarchical T-mesh.
To emphasize the level structure of a hierarchical T-mesh $\mathscr{T}$, in certain cases,
we denote the T-mesh of level $k$ as $\mathscr{T}^k$.
The maximal level number that appears is
called the \textbf{level} of the hierarchical T-mesh, and is denoted by $\lev({\mathscr{T}})$.

\begin{figure}[!ht]
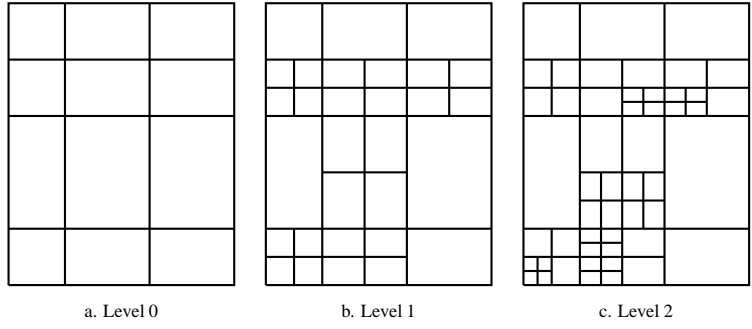

\begin{center}
\psset{unit=0.75cm,linewidth=0.8pt}
\begin{tabular}{ccc}
\pspicture(0,0)(4,5) \psline(0,0)(4,0)(4,5)(0,5)(0,0)
\psline(0,1)(4,1)\psline(0,3)(4,3)\psline(0,4)(4,4)
\psline(1,0)(1,5)\psline(2.5,0)(2.5,5)
\endpspicture &
\pspicture(0,0)(4,5)
\psline(0,0)(4,0)(4,5)(0,5)(0,0)\psline(0,1)(4,1)\psline(0,3)(4,3)\psline(0,4)(4,4)
\psline(1,0)(1,5)\psline(2.5,0)(2.5,5)\psline(1.75,0)(1.75,4)
\psline(0,0.5)(2.5,0.5)\psline(0.5,0)(0.5,1)\psline(2.5,2)(1,2)\psline(0,3.5)(4,3.5)
\psline(0.5,3)(0.5,4)\psline(3.25,3)(3.25,4)
\endpspicture &\pspicture(0,0)(4,5) \psline(0,0)(4,0)(4,5)(0,5)(0,0)
\psline(0,1)(4,1)\psline(0,3)(4,3)\psline(0,4)(4,4)
\psline(1,0)(1,5)\psline(2.5,0)(2.5,5)\psline(1.75,0)(1.75,4)
\psline(0,0.5)(2.5,0.5)\psline(0.5,0)(0.5,1)\psline(2.5,2)(1,2)\psline(0,3.5)(4,3.5)
\psline(0.5,3)(0.5,4)\psline(3.25,3)(3.25,4)\psline(0,0.25)(0.5,0.25)
\psline(0.25,0)(0.25,0.5)\psline(1.375,0)(1.375,2)\psline(1,0.25)(1.75,0.25)
\psline(1,0.75)(1.75,0.75)\psline(1,1.5)(2.5,1.5)\psline(2.125,1)(2.125,2)
\psline(2.125,3)(2.125,3.5)\psline(1.75,3.25)(3.25,3.25)\psline(2.875,3)(2.875,3.5)
\endpspicture \\
\scriptsize  a. Level 0  & \scriptsize   b. Level 1  & \scriptsize  c. Level 2
\end{tabular}
\caption{A hierarchical T-mesh. \label{HTmesh}}
\end{center}
\end{figure}

For a hierarchical T-mesh $\mathscr{T}$ of level $n$, we use $T_k$ to denote the set of all of the level $k$ l-edges and
the level $k$ vertices of $\mathscr{T}$,
and $T_k^o$ to denote the set of all of the level $k$ l-edges and all of the vertices on these level $k$ l-edges in $\mathscr{T}$. For any l-edge $l$ of level
$j$, we use $N(l)$ to denote the number of the cells of $\mathscr{T}^{j-1}$ that $l$ crosses.

Figure \ref{exam1} is an example. $\mathscr{T}$ is a hierarchical T-mesh and $\lev(\mathscr{T})=2$. The level 1 l-edges are labeled with dashed lines,
the level 1 vertices are labeled with``$\bullet$", and the level 2 vertices are labeled with ``$\circ$".
Here, $N(l_1)=2$ and $N(l_2)=1$ for the two l-edges $l_1$(between $v_1$ and $v_2$) and $l_2$(between $v_3$ and $v_4$).

\begin{figure}[!htp]
\begin{center}
\begin{tabular}{c@{\hspace*{1.3cm}}c@{\hspace*{1cm}}c}
\begin{pspicture}(0,0)(3,3)
\psline(0,0)(3,0)(3,3)(0,3)(0,0)\psline(1,0)(1,3)\psline(2,0)(2,3)\psline(0,1)(3,1)\psline(0,2)(3,2)
\psline(1.5,1.25)(2,1.25)\psline(1.75,1)(1.75,1.5)
\psset{linestyle=dashed}
\psline(1,1.5)(3,1.5)\psline(1.5,0)(1.5,2)\psline(1,0.5)(3,0.5)\psline(2.5,0)(2.5,2)
\psdots[dotscale=0.7](1,0.5)\rput(0.8,0.5){$v_1$}\psdots[dotscale=0.7](3,0.5)\rput(3.25,0.5){$v_2$}
\rput(1.75,0.8){$v_4$}\rput(1.75,1.7){$v_3$}
\psdots[dotscale=0.7](2,1.5)(1.5,1)(2,0.5)(2.5,1)(1,1.5)(3,1.5)(1.5,0)(1.5,2)(2.5,0)(2.5,2)(1.5,0.5)(2.5,0.5)(1.5,1.5)(2.5,1.5)
\psdots[dotscale=0.7,dotstyle=o](1.5,1.25)(1.75,1.5)(1.75,1)(2,1.25)
\end{pspicture} &
\begin{pspicture}(0,0)(2,2)
\psset{linestyle=dashed}
\psline(0,1.5)(2,1.5)\psline(0.5,0)(0.5,2)\psline(0,0.5)(2,0.5)\psline(1.5,0)(1.5,2)
\psdots[dotscale=0.7](1,1.5)(0.5,1)(1,0.5)(1.5,1)(0,1.5)(2,1.5)(0.5,0)(0.5,2)(0,0.5)(2,0.5)(1.5,0)(1.5,2)(0.5,0.5)(1.5,0.5)(0.5,1.5)(1.5,1.5)
\end{pspicture} &
\begin{pspicture}(0,0)(2,2)
\psset{linestyle=dashed}
\psline(0,1.5)(2,1.5)\psline(0.5,0)(0.5,2)\psline(0,0.5)(2,0.5)\psline(1.5,0)(1.5,2)
\psdots[dotscale=0.7,dotstyle=o](0.5,1.25)(0.75,1.5)
\psdots[dotscale=0.7](1,1.5)(0.5,1)(1,0.5)(1.5,1)(0,1.5)(2,1.5)(0.5,0)(0.5,2)(0,0.5)(2,0.5)(1.5,0)(1.5,2)(0.5,0.5)(1.5,0.5)(0.5,1.5)(1.5,1.5)
\end{pspicture} \\
$\mathscr{T}$ & $T_1$ & $T_1^o$
\end{tabular}
\caption{\label{exam1}$T_k$ and $T_k^o$.}
\end{center}
\end{figure}
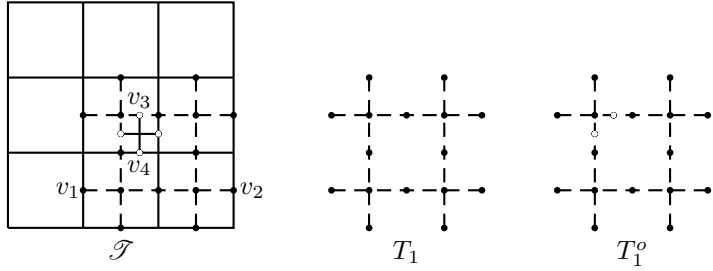

\subsection{Spline spaces over T-meshes}
Given a T-mesh $\mathscr{T}$, we use $\mathscr{F}$ to denote all of the cells in $\mathscr{T}$ and $\Omega$ to denote the region
occupied by the cells in $\mathscr{T}$. In \cite{Deng06}, the following spline space definition is proposed:
$$
\mathbf{S}(m,n,\alpha,\beta,\mathscr{T}):=\{f(x,y)\in
C^{\alpha,\beta}(\Omega): f(x,y)|_{\phi}\in \mathbb{P}_{mn},\forall
\phi\in\mathscr{F}\},
$$
where $\mathbb{P}_{mn}$ is the space of the polynomials with bi-degree $(m,n)$ and $C^{\alpha,\beta}$ is the space consisting of
all of the bivariate functions continuous in $\Omega$ with order $\alpha$ along the $x$-direction and with order $\beta$
along the $y$-direction. It is obvious that $\mathbf{S}(m,n,\alpha,\beta,\mathscr{T})$ is a linear space. In this paper, we only discuss $\mathbf{S}(d,d,d-1,d-1,\mathscr{T})$, which is denoted as $\mathbf{S}^{d}(\mathscr{T})$ for convenience.

\section{Smoothing Cofactor-conformality Method and Conformality Vector Spaces}\label{smocof}
In this section, we review the smooth cofactor-conformality method introduced in~\cite{Sch}
and~\cite{Wang} for computing the dimensions of spline spaces over T-meshes~\cite{LiC,Lipre,Wu13}.

\begin{figure}[!htp]
\begin{center}
\begin{pspicture}(0,0)(3,3)
\psline(0,1.5)(3,1.5)\psline(1.5,0)(1.5,3)
\psdots[dotscale=0.7](1.5,1.5)\rput(2.3,1.2){$v_i(x_i,y_i)$}
\rput(2.25,2.25){$f_4$}\rput(2.25,0.75){$f_3$}
\rput(0.75,2.25){$f_1$}\rput(0.75,0.75){$f_2$}
\end{pspicture}
\caption{Smoothing conditions in adjacent cells.\label{smooth}}
\end{center}
\end{figure}
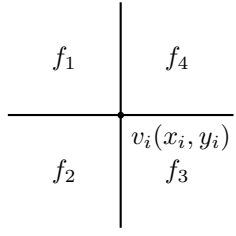

Referring to Figure~\ref{smooth}, let $f_j(x,y), j = 1, 2, 3, 4$ be the
bivariate polynomials surrounding the interior vertex $v_{i}(x_i,y_i)$ (if the vertex $v_{i}$ is a T-junction,
some of the polynomials are identical). Then there exist a constant $\gamma_{i}\in \mathbb{R}$ and two polynomials $a(y)\in \mathbb{P}_d[y], b(x)\in \mathbb{P}_d[x]$, such that
\begin{align*}
& f_1(x,y)-f_2(x,y)=b(x)(y-y_i)^d, \\
& f_3(x,y)-f_2(x,y)=a(y)(x-x_i)^d,  \\
& f_4(x,y)-f_2(x,y)=a(y)(x-x_i)^d+b(x)(y-y_i)^d+\gamma_{i}(x-x_i)^d(y-y_i)^d,
\end{align*}
where $a(y)$ and $b(x)$ are the edge co-factors associated with the corresponding edges, and $\gamma_{i}$ is the vertex co-factor associated with vertex $v_{i}$.

There are other constraints for the T l-edges of the T-mesh. For example, given a horizontal T l-edge $l_j$ with $r$
vertices $v_{j_{1}},v_{j_{2}},\ldots,v_{j_{r}}$, let the $x$-coordinate of $v_{j_{i}}$ be $x_{j_{i}}$, and the vertex co-factors of $v_{j_{i}}$ be $\gamma_{j_{i}}$.
Then we have
$$
\sum_{i=1}^r \gamma_{j_{i}}(x-x_{j_{i}})^d=0.
$$
This equation is equivalent to a linear system denoted by $\mathscr{S}_{l_j}=0$:
\begin{equation}\label{system}
\left\{
\begin{array}{lll}
\sum_{i=1}^r \gamma_{j_{i}}=0,  \\
\sum_{i=1}^r \gamma_{j_{i}}x_{j_{i}}=0,  \\
\cdots,  \\
\sum_{i=1}^r \gamma_{j_{i}}x_{j_{i}}^d=0.
\end{array}
\right.
\end{equation}
Similarly, we can derive the linear system for a vertical T l-edge.

As in~\cite{Wu13}, we can define the {\bf conformality vector space} for a set of T l-edges as follows.
\begin{definition}
Suppose $L$ is a set of T l-edges: $L=\{l_1,l_2,\ldots,l_n:l_i \mbox{ is a T l-edge,} 1\leqslant i \leqslant n\}$,
$v_1,v_2,\ldots,v_m$ are all of the vertices on $l_1,l_2,\ldots,l_n$, and $\gamma_j$ is the co-factor of $v_j$. Then the
\textbf{conformality vector space} $W[L]$ of $L$ is defined by
$$
W[L]:=\{\gamma=(\gamma_1, \gamma_2,\ldots,\gamma_m)^T:\mathscr{S}_{l_i}=0,\,0\leqslant i\leqslant n\},
$$
where $\mathscr{S}_{l_i}=0$ is the linear system as Equation \eqref{system} associated with
the l-edge $l_i$.
For some predefined order of the vertex co-factors and the l-edges, the coefficient
matrix for the homogeneous system of $W[L]$ is called the \textbf{conformality matrix} of $L$.
\end{definition}

In particular, if $L$ only contains one T l-edge $l_1$ with $V_0$ vertices,
$\dim W[l_1]=(V_0-d-1)_+:=\mbox{max}(0,V_0-d-1)$. In the following, we also call such an l-edge an l-edge of order $(V_0-d-1)_+$.
When $(V_0-d-1)_+=0$, we say $l_1$ is a \textbf{trivial} l-edge.

For an l-edge of order 1, we have the following property.
\begin{proposition}
In a given T-mesh $\mathscr{T}$, suppose $l$ is a T l-edge with $d+2$ vertices $v_0,v_1,\ldots,v_{d+1}$ (from one end point to the other end point) and the co-factor of $v_i$ is $\gamma_i$. Then
\begin{align} \label{1-order}
\gamma_{i+k}=(-1)^k \gamma_i \frac{\prod_{j\neq i} d_{i,j}}{\prod_{j\neq i+k} d_{i+k,j}},
\end{align}
where $d_{i,j}$ is the distance between $v_i$ and $v_j$, $k\geqslant 1$.
\end{proposition}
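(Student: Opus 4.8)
The plan is to recognize the linear system $\mathscr{S}_l = 0$ as a homogeneous Vandermonde system and to exhibit its (one-dimensional) solution space explicitly. Ordering the vertices so that $x_0 < x_1 < \cdots < x_{d+1}$ along $l$, the identity $\sum_{i=0}^{d+1}\gamma_i(x-x_i)^d = 0$ is equivalent, after comparing the coefficients of $1, x, \ldots, x^d$, to the system $M\gamma = 0$, where
$$
M = \begin{pmatrix} 1 & \cdots & 1 \\ x_0 & \cdots & x_{d+1} \\ \vdots & & \vdots \\ x_0^d & \cdots & x_{d+1}^d \end{pmatrix}
$$
is the $(d+1)\times(d+2)$ Vandermonde-type matrix on the $d+2$ distinct nodes. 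Since the nodes are distinct, $M$ has full row rank $d+1$, so $\dim\Ker M = 1$, which is consistent with $l$ being an l-edge of order $1$.

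First I would write down a generator of this one-dimensional kernel by the standard cofactor rule: the vector whose $i$-th entry is $(-1)^i\det M^{(i)}$, where $M^{(i)}$ denotes $M$ with its $i$-th column deleted, is annihilated by $M$. This follows by padding $M$ with a repeated row to form a singular $(d+2)\times(d+2)$ matrix and expanding its vanishing determinant along that row. Here $M^{(i)}$ is itself a genuine Vandermonde matrix on the nodes $\{x_j: j\neq i\}$, so $\det M^{(i)} = V_i := \prod_{p<q,\,p,q\neq i}(x_q-x_p) > 0$. Hence every solution is a scalar multiple of $\bigl((-1)^i V_i\bigr)_i$, and in particular $\gamma_{i+k}/\gamma_i = (-1)^k V_{i+k}/V_i$.

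Next I would convert this ratio of Vandermonde minors into the stated product of distances. Writing $V = \prod_{p<q}(x_q-x_p)$ for the full Vandermonde determinant on all $d+2$ nodes, the factors of $V$ that involve the index $i$ are exactly $\prod_{j\neq i}|x_i-x_j| = \prod_{j\neq i} d_{i,j}$, so $V = V_i\cdot\prod_{j\neq i} d_{i,j}$; that is, $V_i = V/\prod_{j\neq i} d_{i,j}$. Substituting this (and the analogous identity for $V_{i+k}$) into $\gamma_{i+k}/\gamma_i = (-1)^k V_{i+k}/V_i$ cancels the common factor $V$ and yields exactly \eqref{1-order}.

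The conceptual content, namely that the solution space is one-dimensional and spanned by the signed Vandermonde minors, is routine linear algebra; the step most likely to cause errors is the sign bookkeeping, where I must use the monotone ordering $x_0<\cdots<x_{d+1}$ of the vertices along $l$ to match each signed factor $(x_q-x_p)$ against the unsigned distance $d_{i,j}$ and to confirm that the leftover sign is precisely $(-1)^k$. An equivalent and perhaps slicker route avoids minors altogether: setting $\omega(x)=\prod_{m=0}^{d+1}(x-x_m)$, one checks directly that $\gamma_i = 1/\omega'(x_i)$ solves $M\gamma=0$ via the classical identity $\sum_i x_i^p/\omega'(x_i)=0$ for $0\leqslant p\leqslant d$, after which $\gamma_{i+k}/\gamma_i = \omega'(x_i)/\omega'(x_{i+k})$ gives \eqref{1-order} once the sign of $\omega'(x_i)=\prod_{j\neq i}(x_i-x_j)$ is tracked.
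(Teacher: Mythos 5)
Your proof is correct and follows essentially the same route as the paper: both identify $\mathscr{S}_l=0$ as a full-row-rank $(d+1)\times(d+2)$ Vandermonde system and express $\gamma_{i+k}/\gamma_i$ as a signed ratio of Vandermonde determinants --- you via the signed-minor generator of the one-dimensional kernel, the paper via Cramer's rule after moving the $\gamma_i$ column to the right-hand side, which amounts to the same computation. If anything, your write-up is more complete, since the paper stops at ``it is easy to verify'' precisely at the step you carry out explicitly, namely factoring the full Vandermonde determinant as $V=V_i\prod_{j\neq i}d_{i,j}$ and tracking the residual sign $(-1)^k$.
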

\begin{proof}
Suppose $l$ is a horizontal T l-edge, and the $x$-coordinate of $v_i$ is $x_i$. Then the linear
system $\mathscr{S}_{l}=0$ has the following form:
$$
\begin{pmatrix}
1    & 1    & \cdots & 1    & \cdots & 1    & \cdots  & 1   \\
x_0  & x_1  & \cdots & x_i  & \cdots & x_{i+k} & \cdots & x_{d+1} \\
\vdots & \vdots &   & \vdots &      &  \vdots &     & \vdots \\
x_0^d  & x_1^d  & \cdots & x_i^d  & \cdots & x_{i+k}^d & \cdots & x_{d+1}^d
\end{pmatrix}
\begin{pmatrix}
\gamma_0 \\ \gamma_1 \\ \vdots \\ \gamma_{d+1}
\end{pmatrix}
=0
$$

$$
\Rightarrow
\begin{pmatrix}
1    & \cdots & 1    & 1  & \cdots  & 1   \\
x_0  & \cdots & x_{i-1}   & x_{i+1} & \cdots & x_{d+1} \\
\vdots  &  & \vdots & \vdots &     & \vdots \\
x_0^d   & \cdots & x_{i-1}^d   & x_{i+1}^d & \cdots & x_{d+1}^d
\end{pmatrix}
\begin{pmatrix}
\gamma_0 \\ \vdots \\ \gamma_{i-1} \\ \gamma_{i+1} \\\vdots \\ \gamma_{d+1}
\end{pmatrix}
=-\gamma_i
\begin{pmatrix}
1 \\ x_i \\ \vdots \\ x_i^d
\end{pmatrix}.
$$
According to Cramer's Rule of the linear equation system, we know that $\gamma_{i+k}=-\gamma_i \cdot \frac{\Delta'}{\Delta}$. Here
$$
\Delta =
\begin{vmatrix}
1    & \cdots & 1    & 1  & \cdots  & 1   \\
x_0  & \cdots & x_{i-1}   & x_{i+1} & \cdots & x_{d+1} \\
\vdots  &  & \vdots & \vdots &     & \vdots \\
x_0^d   & \cdots & x_{i-1}^d   & x_{i+1}^d & \cdots & x_{d+1}^d
\end{vmatrix},
$$
$$
\Delta' =
\begin{vmatrix}
1    & \cdots & 1    & 1  & \cdots & 1 & 1 & 1 & \cdots  & 1   \\
x_0  & \cdots & x_{i-1}   & x_{i+1} & \cdots & x_{i+k-1} & x_i & x_{i+k+1} & \cdots & x_{d+1} \\
\vdots  &  & \vdots & \vdots & & \vdots & \vdots & \vdots &     & \vdots \\
x_0^d   & \cdots & x_{i-1}^d   & x_{i+1}^d & \cdots & x_{i+k-1}^d & x_i^d &x_{i+k+1}^d & \cdots & x_{d+1}^d
\end{vmatrix}.
$$
$\Delta$ and $\Delta'$ are both Vandermonde determinants. Hence, it is easy to verify that equation \eqref{1-order} holds.
\end{proof}

Referring to \cite{LiC}, we have the following lemma for the dimensions of the spline spaces over T-meshes.
\begin{lemma}
\label{lem2.1}
Given a T-mesh $\mathscr{T}$, suppose it has $n_{c}$ cross-cuts and $n_{v}$ interior vertices and $M$ is the conformality matrix of all of the
T l-edges.  Then, it follows that
$$
\dim \mathbf{S}^d(\mathscr{T})=(d+1)^2+n_{c}(d+1)+ n_{v}-\rank M.
$$
\end{lemma}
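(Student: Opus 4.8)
The plan is to exhibit an explicit isomorphism between $\mathbf{S}^d(\mathscr{T})$ and a space of cofactor data, and then count its dimension. First I would fix a reference cell $c_0$ and record the restriction $f|_{c_0}$, which is an arbitrary element of $\mathbb{P}_{dd}$ and hence contributes $(d+1)^2$ free parameters. By the smoothing relations recalled above, a spline $f$ is completely encoded by $f|_{c_0}$ together with the edge cofactors on every interior edge and the vertex cofactors $\gamma_v$ at every interior vertex; conversely, given such data one reconstructs $f$ on any cell $c$ by summing the prescribed jumps along a path in the dual graph from $c_0$ to $c$. The crucial point for well-definedness is \emph{path-independence}: since $\Omega$ is a rectangle and hence simply connected, it suffices that the jumps cancel around each elementary loop encircling a single interior vertex, and this cancellation is exactly the vertex cofactor relation that defines $\gamma_v$. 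Thus the reconstruction yields a genuine $C^{d-1,d-1}$ spline and the correspondence $f\leftrightarrow(f|_{c_0},\text{cofactors})$ is a bijection.

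Next I would organize the edge cofactors by l-edges and determine how many are genuinely free. Along any horizontal l-edge the cofactor of each segment equals the cofactor of the leftmost segment plus the accumulated jumps $\gamma_v(x-x_v)^d$ of the vertices crossed so far, with an analogous statement for vertical l-edges; so the cofactors on a whole l-edge are fixed by its leading cofactor together with the vertex cofactors lying on it, and it remains to analyze the endpoints according to the three types of interior l-edge. For a cross-cut both endpoints are boundary vertices, no vanishing is imposed, and the leading cofactor is a free element of $\mathbb{P}_d[x]$, giving $d+1$ parameters per cross-cut and the term $n_c(d+1)$. For a ray the T-junction endpoint forces the cofactor to vanish just beyond it, an equation that merely \emph{solves for} the leading cofactor in terms of the $\gamma_v$ on the ray and imposes no condition on the $\gamma_v$. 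For a T l-edge both ends force vanishing: the left end fixes the leading cofactor as $\gamma_{Q_L}(x-x_{Q_L})^d$, while the right end produces precisely the conformality equation $\sum_v\gamma_v(x-x_v)^d=0$ of \eqref{system}.

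Consequently the only surviving free parameters are: the $(d+1)^2$ coefficients of $f|_{c_0}$; the $n_c(d+1)$ leading cofactors of the cross-cuts; and the vertex cofactor vector $\gamma=(\gamma_v)$, which ranges over the subspace of $\mathbb{R}^{n_v}$ cut out by the union of all T l-edge conformality systems. Assembling those systems into the single matrix $M$, this subspace has dimension $n_v-\rank M$ (interior vertices lying on no T l-edge contribute zero columns and stay free, so it is immaterial whether $M$ is indexed by all interior vertices or only those on T l-edges). Adding the three contributions gives $\dim\mathbf{S}^d(\mathscr{T})=(d+1)^2+n_c(d+1)+n_v-\rank M$, as claimed.

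I anticipate that the main obstacle is the endpoint bookkeeping in the second step: establishing cleanly that each cross-cut releases $d+1$ free parameters while each ray releases none and imposes no constraint, and that each T l-edge releases none but contributes exactly one conformality system. The subtleties to verify are that every interior edge lies on exactly one interior l-edge of one of the three types, so that no cofactor is left undetermined and none is counted twice; that the vertex cofactor shared by the horizontal and vertical l-edges through a crossing vertex enters both reconstructions consistently (which is what makes the loop condition at that vertex automatic); and that the simple-connectivity argument genuinely reduces global consistency to the per-vertex relations.
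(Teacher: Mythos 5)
The paper never actually proves Lemma~\ref{lem2.1}: it is imported from the literature (``Referring to \cite{LiC}, we have the following lemma''), so there is no internal proof to compare yours against. Your proposal is a correct, self-contained proof, and it is essentially the standard smoothing cofactor-conformality count that underlies the cited result and the machinery the paper reviews in Section~\ref{smocof}. The key points all check out: the correspondence $f\leftrightarrow(f|_{c_0},\text{cofactors})$ is a linear bijection, with path-independence of the reconstruction reduced, via simple connectivity of $\Omega$, to the elementary loop at each interior vertex, and that loop condition holds automatically once the horizontal and vertical edge cofactors at a vertex $v$ are incremented by the same $\gamma_v$ via $\gamma_v(x-x_v)^d$ and $\gamma_v(y-y_v)^d$ respectively. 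The endpoint bookkeeping you flag as the main obstacle is exactly right: a cross-cut keeps its leading cofactor free, giving $d+1$ parameters; a ray's single vanishing condition at its T-junction end merely solves for its leading cofactor and constrains no vertex cofactors; a T l-edge's two vanishing conditions force a zero leading cofactor and yield precisely the system \eqref{system}. Your remark that interior vertices lying on no T l-edge contribute zero columns to $M$ is also what makes $n_v-\rank M$ correct regardless of how $M$ is indexed. A quick sanity check confirms the count: for an $m\times n$ tensor-product mesh all interior l-edges are cross-cuts, $M$ is empty, and the formula gives $\left((d+1)+(m-2)\right)\left((d+1)+(n-2)\right)$, the familiar tensor-product dimension.
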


\begin{remark}For a given T-mesh $\mathscr{T}$, we use $L(\mathscr{T})$ to denote the set of all of the
T l-edges in $\mathscr{T}$. Then, Lemma \ref{lem2.1} states that
\begin{align} \label{equ2.1}
\dim \mathbf{S}^d(\mathscr{T})=\dim W[L(\mathscr{T})]+\dim \mathbf{S}^d(\mathscr{T}\backslash L(\mathscr{T})).
\end{align}
Here, $\mathscr{T}\backslash L(\mathscr{T})$ is the mesh obtained by deleting $L(\mathscr{T})$ from $\mathscr{T}$. See
Figure~\ref{dim} for an example.
\end{remark}

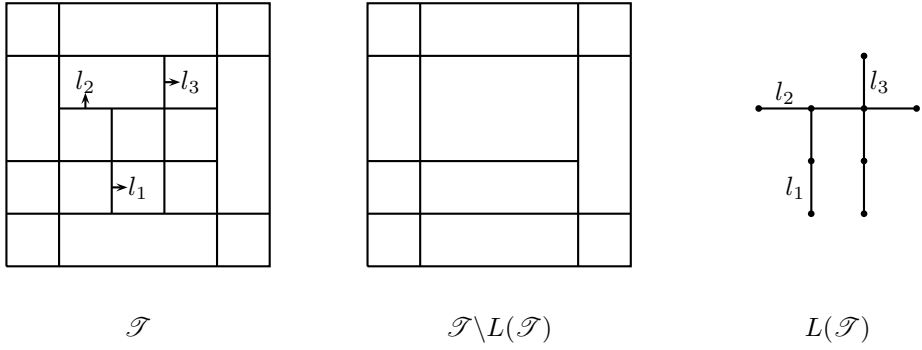
\begin{figure}[!ht]
\begin{center}
\psset{unit=0.7cm,linewidth=0.8pt}
\begin{tabular}{c@{\hspace*{1.3cm}}c@{\hspace*{1cm}}c}
\begin{pspicture}(0,0)(5,5)
\psline(0,0)(0,5)(5,5)(5,0)(0,0)
\psline(0,1)(5,1)\psline(0,2)(4,2)\psline(1,3)(4,3)\psline(0,4)(5,4)
\psline(1,0)(1,5)\psline(2,1)(2,3)\psline(3,1)(3,4)\psline(4,0)(4,5)
\psline{->}(2,1.5)(2.3,1.5)\psline{->}(1.5,3)(1.5,3.3)\psline{->}(3,3.5)(3.3,3.5)
\rput(2.5,1.5){$l_1$}\rput(1.5,3.5){$l_2$}\rput(3.5,3.5){$l_3$}
\end{pspicture} &
\begin{pspicture}(0,0)(5,5)
\psline(0,0)(0,5)(5,5)(5,0)(0,0)
\psline(0,1)(5,1)\psline(0,2)(4,2)\psline(0,4)(5,4)
\psline(1,0)(1,5)\psline(4,0)(4,5)
\end{pspicture} &
\begin{pspicture}(0,0)(5,5)
\psline(1,3)(4,3)\psline(2,1)(2,3)\psline(3,1)(3,4)
\psdots[dotscale=0.7](1,3)(2,3)(3,3)(4,3)(2,1)(2,2)(3,1)(3,2)(3,4)
\rput(1.7,1.5){$l_1$}\rput(1.5,3.3){$l_2$}\rput(3.3,3.5){$l_3$}
\end{pspicture}  \\[0.5cm]
$\mathscr{T}$ & $\mathscr{T}\backslash L(\mathscr{T})$ & $L(\mathscr{T})$
\end{tabular}
\caption{Decompose a T-mesh into a semi-cross mesh and the set of all of the T l-edges.\label{dim}}
\end{center}
\end{figure}

\begin{lemma}
\label{2.3}
For a given T-mesh $\mathscr{T}$, suppose $L(\mathscr{T})$ is the set of all of the T l-edges in $\mathscr{T}$, and
$L$ is a subset of $L(\mathscr{T})$. Then, the following statements are equivalent:
\begin{enumerate}
  \item $\dim W[L(\mathscr{T})]=\dim W[L]+\dim W[L(\mathscr{T}\backslash L)]$.
  \item The projection mapping $\pi : W[L(\mathscr{T})]\rightarrow W[L]$ is surjective.
  \item Suppose the conformality matrix of $L$ is $A$, the conformality matrix of $L(\mathscr{T}\backslash L)$ is $B$, and the conformality matrix of $T(\mathscr{T})$ is $M$. Then, $\rank M = \rank A  + \rank B$.
\end{enumerate}
\end{lemma}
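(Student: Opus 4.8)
The plan is to reduce everything to two ingredients: the elementary dimension count for a conformality vector space, and the rank–nullity theorem for the projection $\pi$. For any set $S$ of T l-edges, $W[S]$ is by definition the solution space of a homogeneous linear system with one unknown cofactor per vertex of $S$, whose coefficient matrix is the conformality matrix of $S$; hence
$$
\dim W[S]=V(S)-\rank(\text{conformality matrix of }S),
$$
where $V(S)$ denotes the number of vertices lying on the l-edges of $S$. Applied to $S=L(\mathscr{T}),\,L,\,L(\mathscr{T}\backslash L)$ this reads $\dim W[L(\mathscr{T})]=V(L(\mathscr{T}))-\rank M$, $\dim W[L]=V(L)-\rank A$, and $\dim W[L(\mathscr{T}\backslash L)]=V(L(\mathscr{T}\backslash L))-\rank B$.

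The combinatorial heart of the proof is the disjoint vertex count
$$
V(L(\mathscr{T}))=V(L)+V(L(\mathscr{T}\backslash L)).
$$
To establish it I would sort the vertices of $L(\mathscr{T})$ by the l-edges passing through them. A vertex lying only on l-edges of $L$ vanishes from $\mathscr{T}\backslash L$ and contributes solely to $V(L)$; a vertex lying only on the complementary l-edges survives and contributes solely to $V(L(\mathscr{T}\backslash L))$; and the decisive case is a crossing vertex $v$ lying on one l-edge of $L$ and one of $L(\mathscr{T})\backslash L$. Such $v$ has valence $4$, but once the $L$-segment through it is deleted its valence drops to $2$, so $v$ becomes an interior point of a surviving l-edge and is no longer a vertex of $L(\mathscr{T}\backslash L)$; thus it too is counted exactly once, on the $V(L)$ side. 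Granting this identity, $(1)\Leftrightarrow(3)$ follows at once: inserting the three dimension formulas into $(1)$ and cancelling $V(L(\mathscr{T}))=V(L)+V(L(\mathscr{T}\backslash L))$ collapses it to $\rank M=\rank A+\rank B$.

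For $(1)\Leftrightarrow(2)$ I would apply rank–nullity to $\pi:W[L(\mathscr{T})]\to W[L]$. The map is well defined since a cofactor vector satisfying every system of $L(\mathscr{T})$ in particular satisfies those of $L$, so its restriction lies in $W[L]$. The key step is to identify $\Ker\pi$ with $W[L(\mathscr{T}\backslash L)]$: an element of $\Ker\pi$ vanishes on all vertices of $L$, hence is supported on exactly the vertices that survive in $\mathscr{T}\backslash L$, and on those the only surviving constraints are the systems of the complementary l-edges, because the cofactors at the demoted crossing vertices have been set to zero and drop out — these are precisely the defining equations of $W[L(\mathscr{T}\backslash L)]$. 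Rank–nullity then gives $\dim W[L(\mathscr{T})]=\dim W[L(\mathscr{T}\backslash L)]+\dim\Im\pi$, and since $\Im\pi\subseteq W[L]$ we have $\dim\Im\pi\le\dim W[L]$ with equality iff $\pi$ is surjective; comparing with $(1)$ yields $(1)\Leftrightarrow(2)$.

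The step I expect to be delicate is the geometric bookkeeping behind both the vertex identity and the kernel identification — namely that deleting $L$ demotes precisely the shared crossing vertices to interior points while leaving each complementary l-edge intact as a genuine T l-edge carrying the same conformality system. The danger is that an l-edge of $L$ might host a T-junction that is an endpoint of a complementary l-edge, in which case deleting it could shorten, merge, or change the type of that l-edge and spoil the clean correspondence. I would control this by a vertex-by-vertex valence analysis, and where needed invoke the setting of the intended applications — where $L$ is chosen (e.g. as a union of the l-edges of the highest levels) so that no l-edge of $L(\mathscr{T})\backslash L$ has an endpoint supported on $L$ — guaranteeing that $\mathscr{T}\backslash L$ is again a T-mesh whose T l-edges are exactly the complementary ones.
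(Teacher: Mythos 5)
Your proposal is correct and takes essentially the same route as the paper's proof: both establish (1)$\Leftrightarrow$(2) by rank--nullity after identifying $\Ker\pi$ with $W[L(\mathscr{T}\backslash L)]$, and both establish (1)$\Leftrightarrow$(3) from $\dim W[S]=V(S)-\rank(\cdot)$ together with the vertex partition $V(L(\mathscr{T}))=V(L)+V(L(\mathscr{T}\backslash L))$, which the paper encodes implicitly in the block form $M=\begin{pmatrix}A & \\ C & B\end{pmatrix}$ with vertex counts $V$, $V'$, $V-V'$. The only differences are cosmetic: your direct substitution merges the paper's two separate implications (3)$\Rightarrow$(1) and (1)$\Rightarrow$(3), and you make explicit the deletion bookkeeping (which vertices and l-edges survive removing $L$) that the paper treats as apparent.
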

\begin{proof}Suppose $L(\mathscr{T})$ has $V$ interior vertices and
$L$ has $V'$ vertices. The matrix $M$ has the form
$\begin{pmatrix}A & \\C & B\end{pmatrix}$ if we define the order
of the vertex co-factors and the l-edges of $L(\mathscr{T})$ with that of $L$ in front of the others.
 \begin{enumerate}
  \item (1) $\Leftrightarrow$ (2): \\
  For the projection mapping $\pi : W[L(\mathscr{T})]\rightarrow W[L]$, it is apparent that
  $\Ker \pi=W[L(\mathscr{T}\backslash L)]$. Because $\dim W[L(\mathscr{T})]=\dim \Ker \pi +\dim \Im \pi$, (1) is equivalent to (2).

\item (3) $\Rightarrow$ (1): \\
  Because $\rank M = \rank A+\rank B$, we have
  \begin{align*}
  \dim W[L(\mathscr{T})]&= V - \rank M  \\
                          &= (V-V') - \rank B + (V'-\rank A)  \\
                          &= \dim W[L(\mathscr{T}\backslash L)]+\dim W[L].
  \end{align*}

  \item (1) $\Rightarrow$ (3):\\
  Suppose $\rank M \neq \rank A+ \rank B$. Then, $\rank M > \rank A +\rank B$. Thus,
  \begin{align*}
  \dim W[L(\mathscr{T})] & =  V - \rank M  \\
         &< (V-V')-\rank B +(V'-\rank A )  \\
         &= \dim W[L(\mathscr{T}\backslash L)]+\dim W[L],
  \end{align*}
  a contradiction.
 \end{enumerate}
\end{proof}

From the proof of lemma \ref{2.3}, we know that
\begin{equation}
\dim W[L(\mathscr{T})]\leqslant \dim W[L(\mathscr{T}\backslash L)]+\dim W[L].  \label{equ2.2}
\end{equation}
Conversely, we have
\begin{equation}
\dim W[L(\mathscr{T})]\geqslant \dim W[L(\mathscr{T}\backslash L)]. \label{lowbound}
\end{equation}
This is because $\rank M\leqslant \rank\begin{pmatrix}A \\ C\end{pmatrix}+\rank(B)\leqslant V'+\rank B$,
\begin{align*}
\dim W[L(\mathscr{T})]&=V-\rank M  \\
         &\geqslant (V-V')-\rank B \\
         &= \dim [L(\mathscr{T}\backslash L)].
\end{align*}

\begin{lemma}
\label{surject2}
For a given T-mesh $\mathscr{T}_1$, $L_{1}=\{l_1,l_2,\ldots,l_n\}$ is a subset of $L(\mathscr{T}_1)$. We extend $l_i$ to $l'_i$ and
get a new mesh $\mathscr{T}_2$, such that $l'_i$ is still a T l-edge of $\mathscr{T}_2$. Thus, $L_2=\{l'_1,l'_2,\ldots,l'_n\}$
is a subset of $L(\mathscr{T}_2)$.
As in Figure \ref{f-sur}, $L_1=\{l_1,l_3\}$, $L_2=\{l'_1,l'_3\}.$
If $\dim W[L(\mathscr{T}_2)]=\dim W[L_{2}]+\dim W[L(\mathscr{T}_2\backslash L_{2})]$,
then $\dim W[L(\mathscr{T}_1)]=\dim W[L_{1}]+ \dim W[L(\mathscr{T}_1\backslash L_{1})]$.
\end{lemma}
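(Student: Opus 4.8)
The plan is to reduce both the hypothesis and the desired conclusion to statements about the surjectivity of projection mappings, using the equivalence of (1) and (2) in Lemma \ref{2.3}. Write $\pi_1\colon W[L(\mathscr{T}_1)]\to W[L_1]$ and $\pi_2\colon W[L(\mathscr{T}_2)]\to W[L_2]$ for the two projections onto the co-factors carried by the vertices of $L_1$ and of $L_2$. By Lemma \ref{2.3}, the hypothesis $\dim W[L(\mathscr{T}_2)]=\dim W[L_2]+\dim W[L(\mathscr{T}_2\backslash L_2)]$ says exactly that $\pi_2$ is surjective, and the conclusion is exactly that $\pi_1$ is surjective. Thus the whole statement becomes the implication ``$\pi_2$ surjective $\Rightarrow$ $\pi_1$ surjective'', and this is what I would prove.

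First I would build a bridge between the two target spaces $W[L_1]$ and $W[L_2]$. Since each $l_i'$ is obtained from $l_i$ by prolongation, the vertices of $l_i$ form a subset of the vertices of $l_i'$; I denote the remaining ``extension vertices'' on the various $l_i'\backslash l_i$ by $V_{\mathrm{ext}}$. I would define $\iota\colon W[L_1]\to W[L_2]$ by keeping the given co-factors on the vertices of each $l_i$ and assigning the value $0$ to every vertex of $V_{\mathrm{ext}}$. Because the system $\mathscr{S}_{l_i'}=0$ differs from $\mathscr{S}_{l_i}=0$ only in the terms attached to $V_{\mathrm{ext}}$, setting those terms to zero shows $\iota(\gamma)\in W[L_2]$ for every $\gamma\in W[L_1]$, so $\iota$ is a well-defined lift.

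The core of the argument is then a diagram chase. Given an arbitrary $\gamma\in W[L_1]$, I would push it to $\iota(\gamma)\in W[L_2]$ and, using the assumed surjectivity of $\pi_2$, choose a preimage $\tilde\gamma\in W[L(\mathscr{T}_2)]$ with $\pi_2(\tilde\gamma)=\iota(\gamma)$. The crucial feature is that $\tilde\gamma$ must vanish on every vertex of $V_{\mathrm{ext}}$, since $\iota(\gamma)$ does. I would then restrict $\tilde\gamma$ to the vertices of $\mathscr{T}_1$ that lie on its T l-edges, obtaining a candidate $\bar\gamma$, and check that $\bar\gamma\in W[L(\mathscr{T}_1)]$: for each T l-edge $m$ of $\mathscr{T}_1$ one identifies a corresponding T l-edge $m'$ of $\mathscr{T}_2$ (either $m$ itself, now carrying extra crossing vertices produced by the extensions, or the enlarged $l_i'$ when $m=l_i$), notes that all the extra vertices of $m'$ lie in $V_{\mathrm{ext}}$ where $\tilde\gamma=0$, and concludes $\mathscr{S}_m(\bar\gamma)=\mathscr{S}_{m'}(\tilde\gamma)=0$. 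Finally, since $\bar\gamma$ agrees with $\tilde\gamma$, hence with $\iota(\gamma)$, hence with $\gamma$, on the vertices of $L_1$, we obtain $\pi_1(\bar\gamma)=\gamma$; as $\gamma$ was arbitrary, $\pi_1$ is surjective and Lemma \ref{2.3} gives the claim.

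The step I expect to be the main obstacle is the verification that $\bar\gamma$ genuinely lies in $W[L(\mathscr{T}_1)]$. This is where the geometry of the extension operation has to be controlled: I must argue that prolonging the $l_i$ neither destroys the T l-edge status of any other l-edge of $\mathscr{T}_1$ nor adds to such an l-edge a new vertex outside $V_{\mathrm{ext}}$, so that every conformality equation of $\mathscr{T}_1$ arises from one of $\mathscr{T}_2$ merely by deleting terms that $\tilde\gamma$ sends to zero. The hypothesis that each $l_i'$ is still a T l-edge of $\mathscr{T}_2$ is precisely what prevents an extension from merging with, or changing the type of, a neighbouring l-edge, and I would discharge this verification by a short case analysis over the ways an extension can meet an existing l-edge, namely crossing it transversally or terminating in its interior at a new T-junction.
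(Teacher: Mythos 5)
Your proof is correct and follows essentially the same route as the paper's: both reduce the claim via Lemma \ref{2.3} to the surjectivity of the projection mappings, extend an element of $W[L_{1}]$ by zero entries on the vertices added from $L_{1}$ to $L_{2}$ to obtain an element of $W[L_{2}]$, and then transfer a preimage back to $\mathscr{T}_1$. The only difference is presentational: the paper carries out the transfer with block conformality matrices ($A_{2}$ and $C_{2}$ arising from $A_{1}$ and $C_{1}$ by adding columns, with a common block $B$), whereas you phrase it as a diagram chase with restriction of co-factor vectors, making explicit the structural verification that the paper leaves implicit.
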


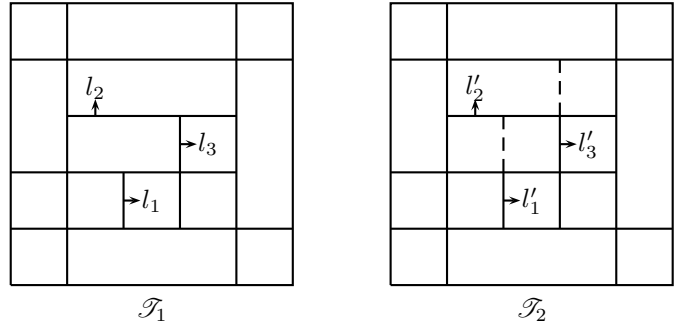
\begin{figure}[!ht]
\begin{center}
\psset{unit=0.75cm,linewidth=0.8pt}
\begin{tabular}{c@{\hspace*{1.3cm}}c}
\begin{pspicture}(0,0)(5,5)
\psline(0,0)(0,5)(5,5)(5,0)(0,0)
\psline(0,1)(5,1)\psline(0,2)(4,2)\psline(1,3)(4,3)\psline(0,4)(5,4)
\psline(1,0)(1,5)\psline(2,1)(2,2)\psline(3,1)(3,3)\psline(4,0)(4,5)
\psline{->}(2,1.5)(2.3,1.5)\psline{->}(1.5,3)(1.5,3.3)\psline{->}(3,2.5)(3.3,2.5)
\rput(2.5,1.5){$l_1$}\rput(1.5,3.5){$l_2$}\rput(3.5,2.5){$l_3$}
\end{pspicture} &
\begin{pspicture}(0,0)(5,5)
\psline(0,0)(0,5)(5,5)(5,0)(0,0)
\psline(0,1)(5,1)\psline(0,2)(4,2)\psline(1,3)(4,3)\psline(0,4)(5,4)
\psline(1,0)(1,5)\psline(2,1)(2,2)\psline(3,1)(3,3)\psline(4,0)(4,5)
\psline{->}(2,1.5)(2.3,1.5)\psline{->}(1.5,3)(1.5,3.3)\psline{->}(3,2.5)(3.3,2.5)
\psline[linestyle=dashed](2,2)(2,3)\psline[linestyle=dashed](3,3)(3,4)
\rput(2.5,1.5){$l_1'$}\rput(1.5,3.5){$l_2'$}\rput(3.5,2.5){$l_3'$}
\end{pspicture}
 \\
$\mathscr{T}_1$ & $\mathscr{T}_2$
\end{tabular}
\caption{Figure for lemma \ref{surject2}.\label{f-sur}}
\end{center}
\end{figure}

\begin{proof}
Suppose the conformality matrices of $L_{1}$, $L_{2}$, $L(\mathscr{T}_1)$ and $L(\mathscr{T}_2)$ are $A_{1}$, $A_{2}$, $M_{1}$ and
$M_{2}$, respectively. We can
arrange the order of the l-edges and the vertex co-factors such that $M_{1}=\begin{pmatrix}A_{1} & \\ C_{1} & B \end{pmatrix}$, and
$M_{2}=\begin{pmatrix}A_{2} & \\ C_{2} & B \end{pmatrix}$.
$A_{2}$ is obtained by adding some columns to $A_{1}$ on the vertices added from
$L_{1}$ to $L_{2}$, and so is $C_{2}$.

Because $\dim W[L(\mathscr{T}_2)]=\dim W[L_{2}]+\dim W[L(\mathscr{T}_2\backslash L_{2})]$, it follows that the projection mapping
$\pi' : W[L(\mathscr{T}_2)]\rightarrow W[L_{2}]$ is surjective, i.e., for any $x$ that satisfies $A_{2}x=0$, the solution
of $By=-C_{2}x$ exists. Then, for any $X_{1}$ that satisfies $A_{1}X_{1}=0$, we add $0$ entries to $X_{1}$ on the vertices added from $L_{1}$
to $L_{2}$. Suppose we obtain $X_{2}$ after adding. It is easy to verify that $A_{1}X_{1}=A_{2}X_{2}$ and $C_{1}X_{1}=C_{2}X_{2}$. Thus,
$ A_{1}X_{1}=0\Rightarrow A_{2}X_{2}=0\Rightarrow \mbox{the solution of } BY=-C_{2}X_{2}=-C_{1}X_{1} \mbox{ exists}$.
That is, the projection mapping $\pi : W[L(\mathscr{T}_1)]\rightarrow W[L_{1}]$ is surjective, which proves the lemma.
\end{proof}

\begin{lemma}
\label{surject1}
For a given T-mesh $\mathscr{T}$, $L$ is a subset of $L(\mathscr{T})$. Suppose $L(\mathscr{T}\backslash L)$ has $V$ vertices and $E$ l-edges.
If $\dim W[L(\mathscr{T}\backslash L)]=V-(d+1)E$, then $\dim W[L(\mathscr{T})]=\dim W[L]+\dim W[L(\mathscr{T}\backslash L)]$.
\end{lemma}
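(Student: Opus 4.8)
The plan is to reduce everything to the block structure of the conformality matrix already exhibited in the proof of Lemma~\ref{2.3}, and then to read the hypothesis as a full-row-rank statement. Recall that, ordering the vertex co-factors and the l-edges of $L(\mathscr{T})$ so that those coming from $L$ are listed first, the conformality matrix of $L(\mathscr{T})$ takes the block lower-triangular form $M=\begin{pmatrix}A&0\\ C&B\end{pmatrix}$, where $A$ is the conformality matrix of $L$ and $B$ is the conformality matrix of $L(\mathscr{T}\backslash L)$. The first thing I would record is a counting observation: since each T l-edge contributes exactly $d+1$ equations to its system $\mathscr{S}_{l_i}=0$, the matrix $B$ has precisely $(d+1)E$ rows and $V$ columns. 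Because $\dim W[L(\mathscr{T}\backslash L)]=V-\rank B$, the hypothesis $\dim W[L(\mathscr{T}\backslash L)]=V-(d+1)E$ forces $\rank B=(d+1)E$; that is, $B$ attains its maximal possible rank and so has full row rank. This is the one piece of genuine content; the remainder is formal.

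Given this, I would invoke the equivalence (1)$\Leftrightarrow$(2) of Lemma~\ref{2.3} and prove that the projection $\pi:W[L(\mathscr{T})]\to W[L]$ is surjective. Let $x\in W[L]$, so that $Ax=0$. By the block form of $M$, a co-factor vector $(x,y)^{T}$ lies in $W[L(\mathscr{T})]$ precisely when $Ax=0$ and $Cx+By=0$, i.e. when $By=-Cx$. Since $B$ has full row rank it is surjective as a linear map, so the system $By=-Cx$ admits a solution $y$ for every right-hand side, in particular for $-Cx$. Hence every $x\in W[L]$ is the image under $\pi$ of some element of $W[L(\mathscr{T})]$, so $\pi$ is surjective, and Lemma~\ref{2.3} yields $\dim W[L(\mathscr{T})]=\dim W[L]+\dim W[L(\mathscr{T}\backslash L)]$, which is the assertion.

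Equivalently, I could argue at the level of ranks through part (3) of Lemma~\ref{2.3}: for a block lower-triangular matrix one always has $\rank M\geq \rank A+\rank B$, while $\rank(C\mid B)\leqslant (d+1)E=\rank B$ gives $\rank M\leqslant \rank A+\rank(C\mid B)=\rank A+\rank B$, so the two bounds coincide. Either route is essentially bookkeeping once the full-row-rank of $B$ is in hand. I do not expect a real obstacle in this lemma: the only point that needs care is confirming that the bottom block of $M$ really is the conformality matrix $B$ of $L(\mathscr{T}\backslash L)$, with exactly $(d+1)E$ rows, which is precisely the block decomposition supplied by Lemma~\ref{2.3}. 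With that established, no geometric information about the T-mesh enters, and the key insight is simply that the stated dimension hypothesis is the assertion $\rank B=(d+1)E$.
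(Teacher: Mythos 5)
Your proposal is correct and matches the paper's proof in its essential content: both hinge on reading the hypothesis as $\rank B=(d+1)E$, i.e.\ full row rank of the conformality matrix $B$ of $L(\mathscr{T}\backslash L)$, within the block decomposition $M=\begin{pmatrix}A&\\ C&B\end{pmatrix}$ from Lemma~\ref{2.3}. The paper argues via the rank inequalities $\rank A+\rank B\leqslant\rank M\leqslant \rank A+\rank\begin{pmatrix}C&B\end{pmatrix}=\rank A+\rank B$ and then applies part (3) of Lemma~\ref{2.3}, which is exactly the alternative route you sketch at the end; your primary surjectivity argument is just the (1)$\Leftrightarrow$(2) face of the same equivalence.
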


\begin{proof}
Suppose the conformality matrix of $L$ is $A$, the conformality matrix of $L(\mathscr{T}\backslash L)$ is B, and
the conformality matrix of $L(\mathscr{T})$ is $M$. Then, we can arrange the order of the vertex co-factors and the l-edges
such that $M=\begin{pmatrix}A & \\ C & B \end{pmatrix}$.

Because $\rank B =V-\dim W[L(\mathscr{T}\backslash L)]=V-(V-(d+1)E)=(d+1)E$, it follows that $B$ is of full row rank.
We have $\rank M \leqslant \rank A + \rank \begin{pmatrix}C & B \end{pmatrix} \leqslant \rank A + (d+1)E= \rank A + \rank B$.
Conversely, $\rank M \geqslant \rank A+ \rank B$. Thus, $\rank M= \rank A+ \rank B$, which is
equivalent to $\dim W[L(\mathscr{T})]=\dim W[L]+\dim W[L(\mathscr{T}\backslash L)]$ according to Lemma \ref{2.3}.
\end{proof}

Using lemma \ref{surject1}, we can obtain the following corollary.

\begin{corollary}
\label{general}
Given a T-mesh $\mathscr{T}$, if there is an order of all of the T l-edges, say $l_1,l_2,\ldots,l_n$, such that $n_{l_i}\geqslant d+1$, where $n_{l_i}$ is the number of the vertices on $l_i$ but not on $l_j, j=1,2,\ldots,i-1$, then $\dim W[L(\mathscr{T})]=V-(d+1)n$, where $V$ is the number of the vertices on all of these l-edges.
\end{corollary}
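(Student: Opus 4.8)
The plan is to induct on the number $n$ of T l-edges in $L(\mathscr{T})=\{l_1,\ldots,l_n\}$, peeling off the first l-edge at each stage and applying Lemma \ref{surject1}. The base case $n=1$ is immediate: a single l-edge carries $n_{l_1}\ge d+1$ vertices, so by the dimension count recorded just after the definition of $W[\cdot]$ we get $\dim W[\{l_1\}]=(n_{l_1}-d-1)_+=n_{l_1}-(d+1)=V-(d+1)\cdot 1$.

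For the inductive step I would invoke Lemma \ref{surject1} with $L=\{l_1\}$, so that $L(\mathscr{T}\backslash L)=\{l_2,\ldots,l_n\}$. The key point is that the reindexed family $l_2,\ldots,l_n$, regarded inside $\mathscr{T}\backslash\{l_1\}$, still satisfies the ordering hypothesis of the corollary. Here I would use that every interior vertex lies on at most one horizontal and one vertical l-edge, so $l_1$ meets each later $l_i$ in at most one point $w$; since $w$ lies on the earlier l-edge $l_1$ it is never counted among the $n_{l_i}$ new vertices in $\mathscr{T}$, and a short case check shows that deleting $l_1$ leaves each new-vertex count unchanged. The inductive hypothesis applied to $\mathscr{T}\backslash\{l_1\}$ then gives $\dim W[\{l_2,\ldots,l_n\}]=V'-(d+1)(n-1)$, with $V'$ the number of vertices on $l_2,\ldots,l_n$ after deletion; this is exactly the premise $\dim W[L(\mathscr{T}\backslash L)]=V'-(d+1)(n-1)$ that Lemma \ref{surject1} requires, and the lemma yields $\dim W[L(\mathscr{T})]=\dim W[\{l_1\}]+\dim W[\{l_2,\ldots,l_n\}]$.

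It then remains to assemble the counts: $\dim W[\{l_1\}]=n_{l_1}-(d+1)$ and $\dim W[\{l_2,\ldots,l_n\}]=V'-(d+1)(n-1)$ sum to $n_{l_1}+V'-(d+1)n$, which equals $V-(d+1)n$ once one verifies the bookkeeping identity $V=n_{l_1}+V'$ — i.e. that the vertices $l_1$ shares with the later l-edges are precisely those that cease to be vertices when $l_1$ is removed. I expect this vertex bookkeeping to be the main obstacle, both in confirming that the ``$\ge d+1$ new vertices'' hypothesis survives deletion and in establishing $V=n_{l_1}+V'$, since it is where the combinatorics of shared crossing vertices enters. By contrast the algebraic core of Lemma \ref{surject1} is routine here: it amounts to the conformality matrix of a single l-edge with $\ge d+1$ distinct-coordinate vertices having full row rank $d+1$, which is the Vandermonde system \eqref{system}. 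Indeed, choosing $d+1$ of the new vertices of each $l_i$ as ``private'' columns exhibits a square submatrix of the full conformality matrix $M$ that is block lower-triangular (the equations of $l_i$ vanish on the private columns of every later $l_j$) with invertible Vandermonde diagonal blocks, which is a global way to see $\rank M=(d+1)n$ and explains why exactly $d+1$ \emph{new} vertices per l-edge are needed.
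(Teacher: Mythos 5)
Your argument is correct, and its engine is the same as the paper's: peel off one T l-edge at a time and split off its contribution with Lemma \ref{surject1}. The difference is the direction of the peeling, which redistributes where the work sits. The paper takes $L=\{l_1,\ldots,l_{n-1}\}$, so the remaining part $L(\mathscr{T}\backslash L)$ is the single l-edge $l_n$ carrying exactly its $n_{l_n}\geqslant d+1$ new vertices; the full-row-rank hypothesis of Lemma \ref{surject1} is then trivial (one Vandermonde block), and the recursion runs on the prefix $\{l_1,\ldots,l_{n-1}\}$, whose counts $n_{l_i}$ are untouched because they only refer to earlier l-edges. You instead take $L=\{l_1\}$, so the hypothesis of Lemma \ref{surject1} must hold for the suffix, and you get it from the inductive hypothesis applied to $\mathscr{T}\backslash\{l_1\}$; this is what forces the bookkeeping you flag — that deleting $l_1$ removes exactly the shared vertices, none of which were counted in any $n_{l_i}$, so both the ordering hypothesis and the identity $V=n_{l_1}+V'$ survive — and you handle it correctly. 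Your direction has the mild advantage that each sub-problem is the full l-edge set of an actual mesh, whereas the paper's recursion applies Lemma \ref{surject1} to a proper subcollection $L$ treated as if it were such a set (harmless, since the lemma is really a block-matrix statement). Finally, your closing observation — choosing $d+1$ private new-vertex columns per l-edge yields a square block-lower-triangular submatrix of $M$ with invertible Vandermonde diagonal blocks, so $\rank M=(d+1)n$ outright — is a valid induction-free proof that the paper does not give, and it is arguably the most transparent way to see why $d+1$ new vertices per l-edge is exactly the right threshold.
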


\begin{proof}
Let $L=\{l_1,l_2,\ldots,l_{n-1}\}$. Then, $L(\mathscr{T}\backslash L)$ has $n_{l_n}$ vertices. Because $n_{l_n}\geqslant d+1$, $\dim W[L(\mathscr{T}\backslash L)]=(n_{l_n}-(d+1))_+=n_{l_n}-(d+1)$. According to Lemma \ref{surject1}, we have $\dim W[L(\mathscr{T})]=\dim W[L]+\dim W[L(\mathscr{T}\backslash L)]=\dim W[L]+n_{l_n}-(d+1)$. $\dim W[L]$ can be analyzed as
$\dim W[L(\mathscr{T})]$. Continuing this process, at last we have
$$
\dim W[L(\mathscr{T})]=\sum_{i=1}^n (n_{l_i}-(d+1))=V-(d+1)n.
$$
\end{proof}

For any subset $L$ of $L(\mathscr{T})$, if it has at least one such order as Corollary~\ref{general}, we say $L$ has a \textbf{reasonable order}.

\section{Extended T-Meshes and Homogeneous Boundary Conditions}\label{exthbc}

For a T-mesh $\mathscr{T}$ of $\mathbf{S}(m,n,m-1,n-1,\mathscr{T})$, the extended T-mesh $\mathscr{T}^{\varepsilon}$ is an enlarged
T-mesh by copying each horizontal boundary l-edge of $\mathscr{T}$ $m$ times, and each vertical boundary l-edge of $\mathscr{T}$ $n$ times,
and by extending all of the l-edges with an end point on the boundary of $\mathscr{T}$. Figure \ref{extend} is an example of
an extended T-mesh for $\mathbf{S}^2(\mathscr{T})$.

\begin{figure}[!htb]
\begin{center}
\psset{unit=0.75cm,linewidth=0.8pt}
\begin{tabular}{cc}
\begin{pspicture}(0.5,0.5)(6,6)
\psline(1,1)(1,5)(5,5)(5,1)(1,1) \psline(1,2)(5,2)\psline(2,3)(4,3)
\psline(1,4)(5,4)\psline(2,1)(2,4)
\psline(3,2)(3,5)\psline(4,1)(4,5)
\end{pspicture} &
\begin{pspicture}(0.5,0.5)(6,6)
\psline(0.6,0.6)(0.6,5.4)(5.4,5.4)(5.4,0.6)(0.6,0.6)
\psline(0.6,2)(5.4,2)\psline(2,3)(4,3)
\psline(0.6,4)(5.4,4)\psline(2,0.6)(2,4)
\psline(3,2)(3,5.4)\psline(4,0.6)(4,5.4)
\psline(0.6,0.8)(5.4,0.8)\psline(0.6,1)(5.4,1)
\psline(0.6,5)(5.4,5)\psline(0.6,5.2)(5.4,5.2)
\psline(0.8,0.6)(0.8,5.4)\psline(1,0.6)(1,5.4)
\psline(5,0.6)(5,5.4)\psline(5.2,0.6)(5.2,5.4)
\end{pspicture} \\
$\mathscr{T}$  & $\mathscr{T}^\varepsilon$
\end{tabular}
\caption{A T-mesh $\mathscr{T}$ and its extended T-mesh $\mathscr{T}^{\varepsilon}$ for $\mathbf{S}^2(\mathscr{T})$.\label{extend}}
\end{center}
\end{figure}
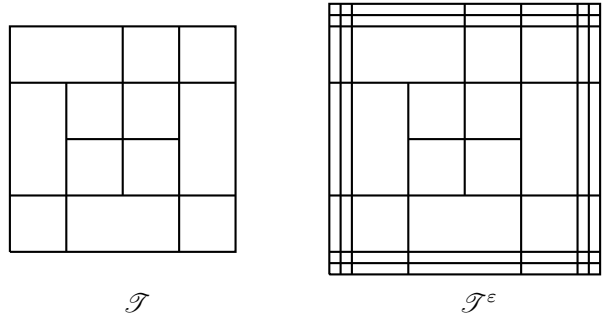

A spline space over a given T-mesh $\mathscr{T}$ with homogeneous boundary conditions (HBC for short) is defined by~\cite{Deng13}
$$
\overline{\mathbf{S}}(m,n,\alpha,\beta,\mathscr{T}):=\{f(x,y)\in
C^{\alpha,\beta}(\mathbb{R}^2): f(x,y)|_{\phi}\in
\mathbb{P}_{mn},\forall \phi\in\mathscr{F},\ \mathrm{ and }\
f|_{\mathbb{R}^2\setminus\Omega}\equiv0\},
$$
where $\mathbb{P}_{mn},\mathscr{F},C^{\alpha,\beta}$ are defined as before. One important observation in \cite{Deng13} is that the
two spline spaces $\mathbf{S}(m,n,m-1,n-1,\mathscr{T})$ and $\overline{\mathbf{S}}(m,n,m-1,n-1,\mathscr{T}^\varepsilon)$ are closely related.

\begin{theorem}[\cite{Deng13}]
 \label{extended}
 Given a T-mesh $\mathscr{T}$ , let $\mathscr{T}^{\varepsilon}$ be the extended T-mesh associated
with $\mathbf{S}(m,n,m-1,n-1,\mathscr{T})$. Then
\begin{align*}
\mathbf{S}(m,n,m-1,n-1,\mathscr{T})&=
                       \overline{\mathbf{S}}(m,n,m-1,n-1,
                       \mathscr{T}^{\varepsilon})|_{\Omega},\\
\dim \mathbf{S}(m,n,m-1,n-1,\mathscr{T})&=
                       \dim
                       \overline{\mathbf{S}}(m,n,m-1,n-1,\mathscr{T}^{\varepsilon}).
\end{align*}
\end{theorem}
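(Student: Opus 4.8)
The plan is to realize the asserted identity as the statement that the restriction map
$$R:\overline{\mathbf{S}}(m,n,m-1,n-1,\mathscr{T}^{\varepsilon})\longrightarrow \mathbf{S}(m,n,m-1,n-1,\mathscr{T}),\qquad \bar f\mapsto \bar f|_{\Omega},$$
is a well-defined linear isomorphism. Well-definedness is immediate: the construction of $\mathscr{T}^{\varepsilon}$ only adds cells outside $\Omega$ and prolongs l-edges outward, so inside $\Omega$ the mesh $\mathscr{T}^{\varepsilon}$ coincides cell-for-cell with $\mathscr{T}$, and $\bar f|_{\Omega}$ is automatically a bi-degree $(m,n)$, $C^{m-1,n-1}$ piecewise polynomial on $\mathscr{F}$. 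The two displayed equalities then split into two claims about $R$: surjectivity yields the set identity $\overline{\mathbf{S}}(\ldots,\mathscr{T}^{\varepsilon})|_{\Omega}=\mathbf{S}(\ldots,\mathscr{T})$, and injectivity upgrades it to equality of dimensions.

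First I would isolate the one-dimensional mechanism underlying everything. By construction each of the four boundary layers of $\mathscr{T}^{\varepsilon}$ (the strip between a side of $\partial\Omega$ and the new outer frame) is locally a full tensor grid whose added grid lines run the entire length of that side, and it contains exactly as many cell-strips as the polynomial degree $p$ in the direction transverse to that side. The key univariate fact is: for a degree-$p$, $C^{p-1}$ spline on an interval partitioned into exactly $p$ subintervals, the linear map sending the spline to its pair of $p$-jets at the two endpoints is a bijection onto $\mathbb{R}^{p}\times\mathbb{R}^{p}$. I would prove this by a dimension count — such a spline space has dimension $2p$ — together with injectivity of the endpoint-jet map: a spline annihilated by both endpoint jets extends by zero to a compactly supported $C^{p-1}$ spline of degree $p$ living in only $p$ knot intervals, which forces it to vanish, since a nonzero such spline must span at least $p+1$ consecutive intervals (the minimal-support/B-spline property).

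With this lemma, injectivity of $R$ is quick. If $\bar f$ vanishes on $\Omega$, then along any vertical line meeting the bottom or top layer the restriction $\bar f(x_{0},\cdot)$ is a univariate $C^{p-1}$ spline of the transverse degree $p$ that is zero inside $\Omega$ and zero beyond the frame, hence supported in the $p$ strips of the layer; the minimal-support fact makes it identically zero. Running the same argument on horizontal lines kills the left and right layers, and a second pass on vertical lines — now exploiting that the side layers are already known to vanish — kills the four corner blocks. Thus $\bar f\equiv 0$. For surjectivity I would extend a given $f\in\mathbf{S}(\ldots,\mathscr{T})$ in two stages, using that the tensor smoothness $C^{m-1,n-1}$ decouples the directions: for each fixed $x$ extend $f(x,\cdot)$ through the top and bottom layers by the univariate bijection (matching $f$'s inner jet and prescribing zero outer jet), then repeat in the $x$-direction on the left and right layers for every $y$, the data in the corner ranges being supplied by the first stage and vanishing beyond the transverse frames.

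The step I expect to be the main obstacle is the interaction of the two directions at the corners, namely checking that this staged construction yields a genuinely $C^{m-1,n-1}$ spline rather than one smooth in each variable only along fixed lines of the other. The delicate point is that the second ($x$-direction) extension must preserve the $C^{n-1}$ regularity in $y$ produced by the first stage; this is exactly where the product structure of the boundary layers is indispensable, because every extended l-edge of $\mathscr{T}^{\varepsilon}$ reaches the outer frame, so a single $x$-partition is used for all $y$ and the univariate extension operator depends linearly — hence $C^{n-1}$-ly in $y$ — on the boundary data it is fed. I would make this rigorous by writing the univariate extension explicitly as a linear operator on jet data and verifying that it commutes with differentiation in the transverse variable, which transports the inner-direction smoothness cleanly across the corner blocks and completes the proof that $\bar f\in\overline{\mathbf{S}}(\ldots,\mathscr{T}^{\varepsilon})$ with $R(\bar f)=f$.
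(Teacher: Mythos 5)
There is no in-paper proof to compare against here: Theorem \ref{extended} is imported verbatim from \cite{Deng13} and used as a black box in Sections 5 and 6, so your proposal must be judged on its own merits. On those merits it is correct, and it is the natural argument. The restriction map $R$ is the right object; your univariate lemma (a degree-$p$, $C^{p-1}$ spline on exactly $p$ intervals is uniquely determined by, and can arbitrarily realize, its endpoint derivative vectors of orders $0,\ldots,p-1$) is precisely the mechanism that makes the extended mesh work, and your proof of it, the dimension count $2p$ together with the fact that a nonzero compactly supported degree-$p$, $C^{p-1}$ spline must occupy at least $p+1$ intervals, is sound. The three-pass sweep (vertical lines for the top/bottom layers, horizontal lines for the side layers, vertical lines again for the corner blocks) does give injectivity, because each one-dimensional trace inside a boundary layer is a spline on exactly $p$ intervals with vanishing jets at both ends; here you use that the transverse knot lines of each layer run its full length, which is guaranteed by the construction. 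For surjectivity you correctly isolate the only delicate point, joint $C^{m-1,n-1}$ smoothness at the corner blocks, and your fix is the right one: since each side layer is a genuine tensor-product grid (every copied or extended l-edge reaches the outer frame), the layer extension can be written as $\sum_{i=0}^{m-1} E_i(x)\,\partial_x^i \bar f(x_0^+,y)$ with \emph{fixed} univariate splines $E_i$, so it is a fixed linear operator applied to jet data that is itself piecewise polynomial of degree $n$ and $C^{n-1}$ in $y$ (this uses that $C^{m-1,n-1}$ provides continuous mixed partials up to order $(m-1,n-1)$, and that the breakpoints of the jet data are exactly the extended horizontal l-edges).

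Two small corrections you should make when writing this up. First, what you call a ``$p$-jet'' must mean the derivatives of orders $0$ through $p-1$; that is what lands in $\mathbb{R}^p$ and what $C^{p-1}$ gluing sees, so the name is off by one even though your dimension count shows the intended meaning. Second, your phrase ``exactly as many cell-strips as the polynomial degree in the direction transverse to that side'' silently corrects the construction as literally stated in Section 4 of this paper, which copies each \emph{horizontal} boundary l-edge $m$ times and each \emph{vertical} one $n$ times, i.e., the transposed convention. Under that literal reading your jet map would not be square and the theorem would fail whenever $m\neq n$; your reading is the one consistent with \cite{Deng13} and with the truth of the statement, and the discrepancy is invisible in this paper only because it always works with $m=n=d$.
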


In the T-mesh $\mathscr{T}$ of $\overline{\mathbf{S}}^d(\mathscr{T})$, for any l-edge $l$, the co-factors of all of the vertices on $l$
satisfy the linear system $\mathscr{S}_l=0$ as Equation \eqref{system}, which means we have the similar conformality vector spaces $W[\mathscr{T}]$
for all of the l-edges. HBC unifies all types of l-edges, which will bring convenience to the computation
of dimensions. From~\cite{Wu13}, we know that
$$
\overline{\mathbf{S}}^d(\mathscr{T})\cong W[\mathscr{T}].
$$

\begin{lemma}\label{tensor}
Given a T-mesh $\mathscr{T}$, we can construct a tensor-product mesh $\mathscr{T}_{\bigotimes}$ with all of the
boundary l-edges and some of the cross-cuts. See Figure~\ref{ten} for an example. Let $L$ be the set of all of the l-edges of $\mathscr{T}$
that are not contained in $\mathscr{T}_{\bigotimes}$. If $\mathscr{T}_{\bigotimes}$ has at least $d+1$
horizontal l-edges and $d+1$ vertical l-edges, then
$$
\dim \overline{\mathbf{S}}^d(\mathscr{T})=\dim \overline{\mathbf{S}}^d(\mathscr{T}_{\bigotimes})+\dim W[L].
$$
\end{lemma}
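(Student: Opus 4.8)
The plan is to argue entirely on the conformality side. Using the isomorphism $\overline{\mathbf{S}}^d(\mathscr{T})\cong W[\mathscr{T}]$ recalled above, together with the same identity for $\mathscr{T}_{\bigotimes}$, the assertion is equivalent to
\[
\dim W[L(\mathscr{T})]=\dim W[L(\mathscr{T}_{\bigotimes})]+\dim W[L],
\]
which is precisely statement (1) of Lemma \ref{2.3} with the peeled subset taken to be $L$ and the complement $L(\mathscr{T}\backslash L)=L(\mathscr{T}_{\bigotimes})$. Since inequality \eqref{equ2.2} already gives ``$\leqslant$'', it suffices to prove the reverse, and for this I would verify condition (2) of Lemma \ref{2.3}: the projection $\pi:W[L(\mathscr{T})]\to W[L]$ is surjective. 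Writing $M=\begin{pmatrix}A&0\\ C&B\end{pmatrix}$ for the conformality matrix of $\mathscr{T}$, with $A,B$ the conformality matrices of $L$ and of $\mathscr{T}_{\bigotimes}$ respectively, surjectivity amounts to solving $By=-C\gamma$ for every $\gamma\in W[L]$, i.e. to showing $C\gamma\in\Im B$.

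It is worth noting first why Lemma \ref{surject1} does not apply directly: a tensor-product mesh with $p$ horizontal and $q$ vertical l-edges satisfies $\dim W[L(\mathscr{T}_{\bigotimes})]=(p-(d+1))(q-(d+1))$, which exceeds $V-(d+1)E$ by exactly $(d+1)^2=\dim\mathbb{P}_{dd}$, so $B$ is rank deficient and the hypothesis of Lemma \ref{surject1} fails. This deficiency is the obstruction to overcome, and I would handle it by identifying $\Ker B^{T}$ explicitly. A left null vector of $B$ assigns to each horizontal grid l-edge at height $y_i$ a polynomial $P_i\in\mathbb{P}_d[x]$ and to each vertical grid l-edge at abscissa $x_j$ a polynomial $Q_j\in\mathbb{P}_d[y]$ subject to $P_i(x_j)+Q_j(y_i)=0$ at every grid crossing. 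Because $\mathscr{T}_{\bigotimes}$ has at least $d+1$ l-edges in each direction, this system forces $P_i(x)=F(x,y_i)$ and $Q_j(y)=-F(x_j,y)$ for a single $F\in\mathbb{P}_{dd}$; hence $\Ker B^{T}\cong\mathbb{P}_{dd}$ is exactly $(d+1)^2$-dimensional and is parametrized by the global polynomial space.

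Surjectivity then reduces to checking the single scalar identity $z_F^{T}C\gamma=0$ for every $F\in\mathbb{P}_{dd}$ and every $\gamma\in W[L]$, where $z_F\in\Ker B^{T}$ is the relation attached to $F$. Writing $F_v:=F(x_v,y_v)$ and running a case analysis on the two l-edges through a vertex $v$ of $L$, only two kinds of vertices contribute: the set $V_1$ of vertices lying on a horizontal grid l-edge and a vertical l-edge of $L$ (each contributing $+F_v\gamma_v$), and the set $V_2$ of vertices lying on a vertical grid l-edge and a horizontal l-edge of $L$ (each contributing $-F_v\gamma_v$); a pure grid crossing cannot be a vertex of $L$, and a vertex on two l-edges of $L$ receives no grid contribution. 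Thus $z_F^{T}C\gamma=\sum_{v\in V_1}F_v\gamma_v-\sum_{v\in V_2}F_v\gamma_v$. Finally I would upgrade each l-edge constraint \eqref{system} of $L$ into $\sum_{v\in\ell}\gamma_vF_v=0$ for all $F\in\mathbb{P}_{dd}$ (multiplying by powers of the coordinate that is constant along $\ell$), and sum these once over all vertical l-edges of $L$ and once over all horizontal ones. Letting $V_0$ denote the vertices on two l-edges of $L$, this yields $\sum_{V_1}F_v\gamma_v+\sum_{V_0}F_v\gamma_v=0$ and $\sum_{V_2}F_v\gamma_v+\sum_{V_0}F_v\gamma_v=0$; subtracting, the $V_0$ terms cancel and $\sum_{V_1}F_v\gamma_v=\sum_{V_2}F_v\gamma_v$, so $z_F^{T}C\gamma=0$, as required.

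The main obstacle is the heart of the third step: identifying $\Ker B^{T}$ with $\mathbb{P}_{dd}$ and then showing that the grid-incidence ``boundary terms'' reorganize precisely into $L$'s own constraints. The hypothesis that $\mathscr{T}_{\bigotimes}$ carries at least $d+1$ l-edges in each direction is used twice and is essential: it keeps the obstruction space no larger than $\mathbb{P}_{dd}$, and it guarantees the Vandermonde systems along the grid lines are solvable so that the extension $y$ genuinely exists. As a simplifying first move one could apply Lemma \ref{surject2} to extend every l-edge of $L$ to a cross-cut, reducing to a genuine tensor-product mesh where this incidence bookkeeping is cleanest.
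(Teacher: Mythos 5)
Your proof is correct, but it takes a genuinely different route from the paper's. The paper never computes a kernel: its proof opens with what you offer as a closing simplification, namely extending every l-edge of $L$ to a cross-cut via Lemma \ref{surject2}. This produces a larger tensor-product mesh $\mathscr{T}_{1}$ and an extended family $L_{1}$; then $\dim\overline{\mathbf{S}}^d(\mathscr{T}_{\bigotimes})=(m-d-1)(n-d-1)$ and $\dim\overline{\mathbf{S}}^d(\mathscr{T}_{1})=(m_{1}-d-1)(n_{1}-d-1)$ are known products, the hypothesis of $d+1$ l-edges in each direction gives $L_{1}$ a reasonable order so that $\dim W[L_{1}]$ is a vertex count by Corollary \ref{general}, the identity for $(\mathscr{T}_{1},L_{1})$ reduces to arithmetic, and Lemma \ref{surject2} transports it back to $(\mathscr{T},L)$; the kernel computation is thereby avoided entirely rather than simplified. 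Your route instead stays on the original mesh and verifies surjectivity of $\pi$ by a Fredholm-type argument: you identify $\Ker B^{T}\cong\mathbb{P}_{dd}$ (the interpolation argument is sound, and the dimension matches the rank deficiency $(d+1)^2$ you computed), and you check the compatibility condition $z_F^{T}C\gamma=0$ by upgrading each constraint of $L$ to $\sum_{v\in\ell}\gamma_v F_v=0$ and cancelling the contributions of vertices interior to $L$; the vertex bookkeeping is right, since a vertex lies on exactly one horizontal and one vertical l-edge, so your sets $V_1$, $V_2$, $V_0$ do partition the vertices of $L$ and $V_1\cap V_2=\emptyset$. What the paper's proof buys is brevity and maximal reuse of the lemmas already established in Section \ref{smocof}; what yours buys is self-containedness and structural insight: it exhibits the obstruction space as the global polynomial space $\mathbb{P}_{dd}$ and shows that the conformality equations of $L$ are exactly what annihilate it, which makes the role of the $d+1$-lines hypothesis transparent. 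One cosmetic remark: under HBC every l-edge of $\mathscr{T}$ carries the system \eqref{system}, so your $W[L(\mathscr{T})]$ and $W[L(\mathscr{T}_{\bigotimes})]$ should be read as $W[\mathscr{T}]$ and $W[\mathscr{T}_{\bigotimes}]$ (all l-edges, not only T l-edges), which is how you in fact use them.
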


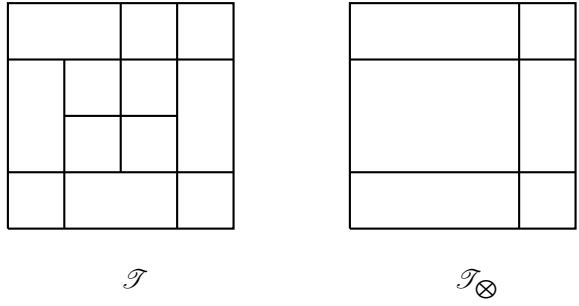
\begin{figure}[!htb]
\begin{center}
\psset{unit=0.75cm,linewidth=0.8pt}
\begin{tabular}{cc}
\begin{pspicture}(0.5,0.5)(6,6)
\psline(1,1)(1,5)(5,5)(5,1)(1,1) \psline(1,2)(5,2)\psline(2,3)(4,3)
\psline(1,4)(5,4)\psline(2,1)(2,4)
\psline(3,2)(3,5)\psline(4,1)(4,5)
\end{pspicture} &
\begin{pspicture}(0.5,0.5)(6,6)
\psline(1,1)(1,5)(5,5)(5,1)(1,1) \psline(1,2)(5,2)
\psline(1,4)(5,4)
\psline(4,1)(4,5)
\end{pspicture} \\
$\mathscr{T}$  & $\mathscr{T}_{\bigotimes}$
\end{tabular}
\caption{An example T-mesh for Lemma~\ref{tensor}. \label{ten}}
\end{center}
\end{figure}

\begin{proof}
Suppose $\mathscr{T}_{\bigotimes}$ is an $m\times n$ tensor-product mesh. Then $\dim \overline{\mathbf{S}}^d(\mathscr{T}_{\bigotimes})=(m-d-1)(n-d-1)$.
Now, we extend all of the interior l-edges in $L$ to the boundary, where the new l-edges set is denoted as $L_{1}$, and we create a new $m_{1}\times n_{1}$
tensor-product mesh $\mathscr{T}_{1}$. $\dim \overline{\mathbf{S}}^d(\mathscr{T}_{1})=(m_{1}-d-1)(n_{1}-d-1)$.

Because $\mathscr{T}_{\bigotimes}$ has at least $d+1$ horizontal l-edges and $d+1$ vertical l-edges, it is easy
to verify that $L_1$ has a reasonable order and we have $\dim W[L_{1}]=m_{1}n_{1}-mn-(d+1)((m_{1}-m)+(n_{1}-n))$, which means that
$$\dim \overline{\mathbf{S}}^d(\mathscr{T}_{1})=\dim \overline{\mathbf{S}}^d(\mathscr{T}_{\bigotimes})+\dim W[L_{1}].$$
Thus, the lemma is directly derived from Lemma~\ref{surject2}.
\end{proof}

\section{Dimensions of $\mathbf{S}^2(\mathscr{T})$ over Hierarchical T-Meshes\label{2211}}

In \cite{Deng13}, the dimension formula of $\mathbf{S}^2(\mathscr{T})$ over Hierarchical T-Meshes is provided, where the analysis is somewhat complex
and the main strategy is to construct a mapping from $\mathbf{S}^2(\mathscr{T})$ to $\mathbf{S}^1(\mathscr{T})$ and
decompose $\mathbf{S}^2(\mathscr{T})$ into the direct sum of some subspaces.
In this section, we provide a concise proof for the dimension formula of $\mathbf{S}^2(\mathscr{T})$
over a hierarchical T-mesh $\mathscr{T}$.

First, we define some notations. Suppose $\lev(\mathscr{T})=n$. We use
$V$, $V^+$, and $E$ to denote the numbers of the vertices, the crossing-vertices, and the interior l-edges of $\mathscr{T}$,
respectively. In addition, we use $V_i$, $V_i^+$, and $E_i$ to represent the numbers of the vertices,
the crossing-vertices, and the interior l-edges in $T_i$, respectively. Here, $T_i$ is the set of all of the level $i$ l-edges and the level $i$ vertices
of $\mathscr{T}$ (see Section \ref{hierarchy}).
It follows that
$$V=\sum_{i=0}^nV_i, \quad V^+=\sum_{i=0}^nV_i^+, \quad E=\sum_{i=0}^nE_i.$$
Then, the key procedure of the proof consists of the following components.
\begin{enumerate}
  \item Compute $\dim W[T_i],i=1,\ldots,n$;
  \item Prove that $\dim \overline{\mathbf{S}}^2(\mathscr{T})=\dim \overline{\mathbf{S}}^2(\mathscr{T}^0)+\dim W[T_1]+\dim W[T_2]+\cdots +\dim W[T_n]$.
\end{enumerate}

\begin{lemma} \label{t0_2}
If $\mathscr{T}^0$ has at least $3$ horizontal l-edges and $3$ vertical l-edges,
$\dim \overline{\mathbf{S}}^2(\mathscr{T}^0)=V_0^+-E_0+1$.
\end{lemma}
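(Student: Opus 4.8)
The plan is to exploit the fact that $\mathscr{T}^0$, being the level-$0$ mesh of a hierarchical T-mesh, is an ordinary tensor-product mesh, so that $\overline{\mathbf{S}}^2(\mathscr{T}^0)$ is a genuine tensor-product spline space whose dimension factors as a product of one-dimensional dimensions. Write $p$ for the number of horizontal l-edges and $q$ for the number of vertical l-edges of $\mathscr{T}^0$; the hypothesis says $p\geqslant 3$ and $q\geqslant 3$. Viewing $\mathscr{T}^0$ as a $p\times q$ tensor-product mesh and applying the tensor-product dimension formula already used in the proof of Lemma~\ref{tensor}, namely $\dim\overline{\mathbf{S}}^d(\mathscr{T}_{\bigotimes})=(m-d-1)(n-d-1)$, I would specialize to $d=2$ and obtain $\dim\overline{\mathbf{S}}^2(\mathscr{T}^0)=(p-3)(q-3)$. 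The role of the hypothesis $p,q\geqslant 3$ is precisely to keep each factor nonnegative and to make the product formula hold with equality rather than as a mere bound; factor-wise this rests on the one-dimensional statement that a degree-$2$, $C^1$ spline that is globally $C^1$ and vanishes identically outside $[y_1,y_p]$ must have value and first derivative zero at both ends, leaving exactly $p-3$ free parameters in each direction.

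Next I would translate $V_0^+$ and $E_0$ into $p$ and $q$ by a direct count on the tensor-product mesh. Since $\mathscr{T}^0$ has no T-junctions, every interior vertex is a valence-$4$ crossing-vertex, and these are exactly the intersections of the $p-2$ interior horizontal grid lines with the $q-2$ interior vertical grid lines; hence $V_0^+=(p-2)(q-2)$. The interior l-edges of $\mathscr{T}^0$ are the interior horizontal and interior vertical grid lines (the four boundary l-edges being excluded), so $E_0=(p-2)+(q-2)=p+q-4$.

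Finally I would verify the algebraic identity $(p-3)(q-3)=V_0^+-E_0+1$. Expanding gives $(p-3)(q-3)=(p-2)(q-2)-(p-2)-(q-2)+1=V_0^+-E_0+1$, which is exactly the claimed formula. The argument is essentially bookkeeping, so I do not expect a deep obstacle; the points requiring care are distinguishing l-edges (grid lines) from cells when counting, correctly excluding the four boundary l-edges from $E_0$, and invoking the hypothesis $p,q\geqslant 3$ so that the tensor-product dimension formula is valid with equality. I expect this last justification of the product formula to be the most delicate step, but it reduces to the standard one-dimensional count just described.
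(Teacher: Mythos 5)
Your proposal is correct and follows essentially the same route as the paper: identify $\mathscr{T}^0$ as a tensor-product mesh, count $V_0^+=(p-2)(q-2)$ and $E_0=p+q-4$, apply the homogeneous-boundary tensor-product dimension $(p-3)(q-3)$, and verify the algebraic identity. The only difference is that you spell out the one-dimensional justification of the tensor-product formula, which the paper takes as known.
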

\begin{proof}Suppose $\mathscr{T}^0$ is an $m\times n$ tensor-product mesh. It follows that
$V_{0}^{+} = (m-2)(n-2)$, $E_{0} = m+n-4$ and
$\dim \overline{\mathbf{S}}^{2}(\mathscr{T}^0)=(m-3)(n-3) = V_0^+-E_0+1$.
\end{proof}

\begin{definition}
For a hierarchical T-mesh $\mathscr{T}$, $T_i$ $(i\geqslant 1)$ can be divided into
several parts, $T_{i_{1}},T_{i_{2}},\ldots,T_{i_{m_i}}$, such that the l-edges of $T_{i_{r}}$ do not
intersect with the l-edges of $T_{i_{s}}$, where $r,s\in \{1,2,\ldots,m_i\}$ and $r\neq s$.
$T_{i_k}$ is called a \textbf{connected component} of $T_i$, $k\in \{1,2,\ldots,m_i\}$.
\end{definition}

For the hierarchical T-mesh in Figure \ref{HTmesh}, $T_1$ has 1 connected component, and $T_2$ has 3 connected components.

\begin{lemma}\label{lemma:1}In a hierarchical T-mesh $\mathscr{T}$, suppose $T_{i_{1}},T_{i_{2}},\ldots,T_{i_{m_i}}$ are all of the connected components of $T_i$ $(i\geqslant 1)$,
then we have
\begin{equation}\label{con com}
\dim W[T_i]=\sum_{k=1}^{m_i}\dim W[T_{i_k}].
\end{equation}
\end{lemma}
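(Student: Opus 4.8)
The plan is to put the conformality matrix of $T_i$ into block-diagonal form, with one block for each connected component, and then to read off the claimed identity from the additivity of rank over the diagonal blocks. Throughout I will use the basic fact (already exploited in the proof of Lemma \ref{2.3}) that if $M$ is the conformality matrix of a set of l-edges then $\dim W[\,\cdot\,]$ equals the number of vertices carrying the co-factors minus $\rank M$.

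First I would show that the vertices of $T_i$ are partitioned by the connected components: every vertex lying on an l-edge of $T_i$ belongs to the l-edges of exactly one component $T_{i_k}$. Indeed, any point at which two l-edges of a T-mesh meet is a vertex of the T-mesh; hence if a vertex $v$ were shared by an l-edge $l\in T_{i_r}$ and an l-edge $l'\in T_{i_s}$ with $r\neq s$, then $l$ and $l'$ would intersect at $v$, contradicting the defining property that the l-edges of distinct components do not intersect. Consequently the set of vertices carrying the co-factors of $W[T_i]$ is the disjoint union of the vertex sets of $W[T_{i_1}],\ldots,W[T_{i_{m_i}}]$.

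Next I would order the co-factors and the l-edges so that all the data of $T_{i_1}$ come first, then those of $T_{i_2}$, and so on. For each l-edge $l$ the linear system $\mathscr{S}_l=0$ of Equation \eqref{system} involves only the co-factors of the vertices on $l$, and by the previous paragraph these all lie in a single component. Hence the conformality matrix $M$ of $T_i$ has no coupling between distinct components and takes the block-diagonal shape $M=\mathrm{diag}(M_1,\ldots,M_{m_i})$, where $M_k$ is exactly the conformality matrix of $T_{i_k}$.

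Finally, using $\rank M=\sum_{k=1}^{m_i}\rank M_k$ for a block-diagonal matrix together with the vertex partition of the first step, I would compute
$$\dim W[T_i]=\nu-\rank M=\sum_{k=1}^{m_i}\bigl(\nu_k-\rank M_k\bigr)=\sum_{k=1}^{m_i}\dim W[T_{i_k}],$$
where $\nu$ and $\nu_k$ denote the numbers of vertices on the l-edges of $T_i$ and of $T_{i_k}$ respectively, so that $\nu=\sum_k\nu_k$ by disjointness. The only point requiring genuine care is the disjointness of the vertex sets across components established in the first step; once that is in hand the statement is a routine consequence of block-diagonality, so I do not anticipate a real obstacle beyond checking that no vertex is simultaneously carried by two components.
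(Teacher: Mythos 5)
Your proof is correct and is essentially the paper's argument made explicit: the paper disposes of this lemma in one line by citing Lemma~\ref{2.3}, whose rank-additivity criterion (condition~(3)) is exactly what you verify by hand, since the disjointness of vertex sets across components forces the off-diagonal coupling block to vanish and the conformality matrix to be block-diagonal. Your careful check that no vertex is shared by two components is the right point to isolate, as it is precisely what makes Lemma~\ref{2.3} applicable here.
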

\begin{proof}This can be directly derived from Lemma~\ref{2.3}.
\end{proof}

\begin{lemma}\label{dimT_i}
Suppose $T_i(i\geqslant 1)$ has $\delta_{1i}$ connected components that divide only one cell of $\mathscr{T}^{i-1}$. Then,
$ \dim W[T_i]=V_i^{+}-E_i+\delta_{1i}$.
\end{lemma}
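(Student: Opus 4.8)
The plan is to reduce $\dim W[T_i]$ to a sum over connected components and to handle separately the components that subdivide a single cell of $\mathscr{T}^{i-1}$ and those that subdivide several. By Lemma~\ref{lemma:1}, $\dim W[T_i]=\sum_{k=1}^{m_i}\dim W[T_{i_k}]$; since $V_i^+=\sum_k V_{i_k}^+$, $E_i=\sum_k E_{i_k}$, and $\delta_{1i}$ counts exactly those $k$ for which $T_{i_k}$ divides one cell, it suffices to prove $\dim W[T_{i_k}]=V_{i_k}^+-E_{i_k}+\epsilon_k$ for each component, where $\epsilon_k=1$ if $T_{i_k}$ divides a single cell and $\epsilon_k=0$ otherwise. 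First I would record the local geometry: the cells divided by $T_{i_k}$ lie at level $i-1$ on one dyadic grid, so the level $i$ l-edges are the midlines, merged along maximal runs of adjacent divided cells. Two structural facts are then isolated and proved. Every vertex in the interior of a level $i$ l-edge is a crossing-vertex, because the l-edge runs straight through it while a transverse edge --- the perpendicular midline at a divided cell's center, or a lower-level cell edge at a shared boundary --- continues on both sides; in particular two perpendicular l-edges meet only at centers of commonly divided cells. On the other hand, each endpoint of an l-edge is the midpoint of a boundary edge of the divided region, where the transverse direction is an edge of level lower than $i$, so it is a T-junction belonging to that l-edge alone. Hence every vertex of $T_{i_k}$ is either a crossing-vertex or a T-junction, and counting the $2E_{i_k}$ endpoints gives $V_{i_k}=V_{i_k}^+ + 2E_{i_k}$.

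For a component dividing more than one cell I would exhibit a reasonable order and apply Corollary~\ref{general}, obtaining $\dim W[T_{i_k}]=V_{i_k}-(d+1)E_{i_k}=V_{i_k}-3E_{i_k}$; with $V_{i_k}=V_{i_k}^+ + 2E_{i_k}$ this gives precisely $\dim W[T_{i_k}]=V_{i_k}^+-E_{i_k}$. Constructing the reasonable order is the main obstacle. The leverage is the first structural fact: the only vertices shared between distinct l-edges are cell centers. An l-edge spanning $\ell\geqslant2$ cells therefore owns $\ell+1\geqslant3$ vertices --- its two endpoints and its $\ell-1$ interior edge-midpoints --- that lie on no other l-edge, so it contributes at least three new vertices in any order. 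The awkward l-edges are those spanning a single cell, which own only their two endpoints and share their unique center with the perpendicular l-edge through that cell; since the component is connected with more than one cell, that partner spans at least two cells, and at most one of the two l-edges through any cell can be single-cell-spanning. I would then place all single-cell-spanning l-edges first: each contributes its two private endpoints and its still-unused center (three new vertices, as its multi-cell partner comes later), and the remaining l-edges each contribute three or more private non-center vertices. This order is reasonable.

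For a component dividing exactly one cell there is no reasonable order, and I would compute directly: the horizontal and vertical midlines each carry three vertices, so each $\mathscr{S}_l=0$ is a $3\times3$ Vandermonde system in its three co-factors and forces them to vanish; thus $\dim W[T_{i_k}]=0$, while $V_{i_k}^+=1$ and $E_{i_k}=2$ make $V_{i_k}^+-E_{i_k}+1=0$ agree. Summing both cases over all components gives $\dim W[T_i]=V_i^+-E_i+\delta_{1i}$. I expect the genuinely delicate points to be the two structural facts above and the reasonable-order construction they enable; once vertex-sharing is pinned to cell centers and endpoints to single l-edges, the rest is bookkeeping via Corollary~\ref{general} and Lemma~\ref{lemma:1}.
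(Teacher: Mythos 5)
Your proposal is correct and follows essentially the same route as the paper's proof: reduction to connected components via Lemma~\ref{lemma:1}, the count $V_{i_k}=V_{i_k}^{+}+2E_{i_k}$, a reasonable order placing the single-cell-spanning l-edges first (each giving exactly $3$ new vertices, the multi-cell ones giving at least $3$) so that Corollary~\ref{general} yields $V_{i_k}^{+}-E_{i_k}$, and the degenerate one-cell cross handled directly with $\dim W[T_{i_k}]=0=V_{i_k}^{+}-E_{i_k}+1$. The only difference is expository: you make explicit the structural facts (vertex sharing occurs only at cell centers, endpoints are private T-junctions) that the paper uses implicitly.
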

\begin{proof}
According to Lemma~\ref{lemma:1}, we can only consider $\dim W[T_{i_{1}}]$. Suppose $T_{i_{1}}$
has $V_{i_{1}}$ vertices, $V_{i_{1}}^+$ crossing-vertices and $E_{i_{1}}$ l-edges.
Then $V_{i_{1}}=V_{i_{1}}^{+} + 2E_{i_{1}}$. Let $t_1,t_2,\ldots, t_k,t_{k+1},\ldots,t_r$ be all of the l-edges in $T_{i_{1}}$, where
$N(t_m)=1,m\in \{1,2,\ldots,k\}$ and $N(t_n)\geqslant 2,n\in \{k+1,\ldots,r\}$.

If $t_1,t_2,\ldots,t_k$ do not intersect with each other, $t_1,t_2,\ldots,t_k,\ldots,t_r$ form a reasonable
order of $T_{i_{1}}$. Actually, for any $j$, if $j \leqslant k$, the number of the vertices on $t_{j}$ but not on $t_{l}, l < j$,
is exactly 3 and for $j > k$, because $t_{j}$ crosses at least two cells in the upper level, the number of the vertices on $t_{j}$ but not on $t_{l}, l < j$,
is at least 3. Hence, we have $\dim W[T_{i_{1}}]=V_{i_{1}}-3E_{i_{1}}=V_{i_{1}}^{+} - E_{i_{1}}$.

Otherwise, $T_{i_{1}}$ only divides one cell of $\mathscr{T}^{i-1}$, and it is easy to determine that $\dim W[T_{i_{1}}]=0=V_{i_{1}}^{+}-E_{i_{1}}+1$. Therefore,
$$\dim W[T_{i_{1}}]= \begin{cases}
V_{i_{1}}^+-E_{i_{1}}+1,  & T_{i_{1}} \mbox{only divides one cell of} \mathscr{T}^{i-1}, \\
V_{i_{1}}^+-E_{i_{1}}, & \mbox{otherwise}.
\end{cases}
$$
Suppose there are $\delta_{1i}$ connected components of $T_i$ that divide only one cell of $\mathscr{T}^{i-1}$. Then,
$$
\dim W[T_i]=V_i^+-E_i+\delta_{1i}.
$$
\end{proof}

Let $T=T_1^o\cup T_2^o\cup\cdots \cup T_n^o$, $T'=T_2^o\cup\cdots \cup T_n^o$, where $T_i^o$ has the same meaning as in Section \ref{hierarchy}. For the second step of the proof, we only have to prove that $\dim \overline{\mathbf{S}}^2(\mathscr{T}^\varepsilon)=
\dim \overline{\mathbf{S}}^2(\mathscr{T}^0)+\dim W[T]$ and
$\dim W[T]=\dim W[T_1]+\dim W[T']$. First, we have the following lemma.

\begin{lemma}\label{+}
For $\overline{\mathbf{S}}^2(\mathscr{T})$, with the definition above, we have
$$
\dim W[T]=\sum_{i=1}^{n}\dim W[T_i].
$$
\end{lemma}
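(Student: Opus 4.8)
The plan is to prove the formula by induction on $n=\lev(\mathscr{T})$, peeling off the highest level at each step. The key structural remark is that if both the l-edges and the vertex co-factors are listed in order of increasing level, then the conformality matrix $M$ of $T$ is block upper-triangular whose $i$-th diagonal block is the conformality matrix of $T_i$. This is because every interior vertex lying on a level-$i$ l-edge is created simultaneously with that l-edge and is therefore of level exactly $i$: a level-$i$ l-edge never carries a co-factor of strictly lower level, so nothing appears below the diagonal, while the only off-diagonal entries come from the higher-level vertices sitting on a lower-level l-edge. Since every co-factor is counted once, at its own level, the claimed identity is equivalent to the rank identity $\rank M=\sum_i\rank M_i$, i.e. to the assertion that the off-diagonal coupling does not increase the rank.

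The induction step takes $L=T_n$ (equivalently $T_n^o$, since all vertices on a top-level l-edge are themselves of level $n$). Because the top-level l-edges meet only top-level vertices, the conformality matrix splits as $M=\begin{pmatrix}A & \\ C & B\end{pmatrix}$ in exactly the form required by Lemma~\ref{2.3}, with $A$ the conformality matrix of $T_n$. Moreover the complement $L(\mathscr{T}\backslash T_n)$ is precisely the analogous system for the coarser mesh $\mathscr{T}^{n-1}$: it consists of the l-edges of levels $1,\dots,n-1$ together with their vertices of level at most $n-1$, which is unchanged by the last subdivision. Hence, once the decomposition $\dim W[T]=\dim W[T_n]+\dim W[L(\mathscr{T}\backslash T_n)]$ is established, the induction hypothesis gives $\dim W[L(\mathscr{T}\backslash T_n)]=\sum_{i=1}^{n-1}\dim W[T_i]$ and the formula follows. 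I would also record that Lemma~\ref{lemma:1} lets me treat each connected component of $T_n$ separately, so the top-level contribution is read off from Lemma~\ref{dimT_i}.

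The hard part is verifying the hypothesis of Lemma~\ref{2.3}, i.e. that the projection $W[T]\to W[T_n]$ is surjective: given a conformality vector $x$ on the top-level l-edges, the coupling vector $Cx$ it induces on the lower l-edges must be solvable by some choice of lower-level co-factors, which is not achievable by extending $x$ with zeros. To handle this I would straighten the top-level l-edges: using Lemma~\ref{surject2} I may extend each l-edge of $T_n$ until it becomes a cross-cut or acquires enough fresh vertices, passing to a mesh on which the relevant l-edge set has a reasonable order. On that mesh Corollary~\ref{general} and Lemma~\ref{surject1} supply the full-row-rank condition that forces the rank to add, and Lemma~\ref{surject2} transports the decomposition back to $\mathscr{T}$. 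The only delicate point is the presence of trivial components---top-level l-edges subdividing a single cell of $\mathscr{T}^{n-1}$, which carry fewer than $d+1$ new vertices and hence admit no reasonable order directly; it is exactly for these that the extension trick, rather than a bare application of Corollary~\ref{general}, is required.
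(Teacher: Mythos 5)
Your reduction is set up correctly: the block-triangular form of $M$, the equivalence of the lemma to $\rank M=\sum_i\rank M_i$, and the identification, via Lemma \ref{2.3} with $L=T_n$, of the remaining task with surjectivity of $W[T]\to W[T_n]$, whose complement block $B$ is precisely the conformality system of $\mathscr{T}^{n-1}$, are all sound. The gap is the mechanism you offer for that surjectivity. The full-row-rank hypothesis of Lemma \ref{surject1} bears on the \emph{complement} matrix $B$, not on the matrix of the set you extend: its proof runs $\rank M\leqslant \rank A+\rank\begin{pmatrix} C& B\end{pmatrix}\leqslant \rank A+(d+1)E=\rank A+\rank B$, and the last step is exactly full row rank of $B$. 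Arranging, by extension, a reasonable order for $L_2$ only makes $A_2$ of full row rank, which does not force $\rank M=\rank A+\rank B$ (rank additivity can fail even when $A$ has full row rank). Worse, $B$ is completely untouched by any extension of the level-$n$ l-edges, since its rows are the conditions of the level-$<n$ l-edges restricted to level-$<n$ vertices; and $B$ genuinely fails to have full row rank whenever some $T_i$ with $i<n$ has a connected component dividing a single cell of $\mathscr{T}^{i-1}$. Already for $n=2$, with one cell of $\mathscr{T}^0$ subdivided and one of its subcells subdivided again, $B$ consists of the two l-edges of the level-$1$ cross carrying their five level-$1$ vertices: six equations in five unknowns. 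Such lower-level trivial components are exactly the configurations that produce the $\delta_1$ term, so they cannot be assumed away. Your ``delicate point'' is therefore misdiagnosed: in the top-peeling split, trivial components at level $n$ are harmless (Lemma \ref{surject1} puts no condition on $A$), while the real obstruction sits at the lower levels, where no extension of $T_n$ can reach it. Re-orienting the split, with $L$ the lower levels and the extended top level as complement, does not help either: Lemma \ref{surject1} would then deliver $\dim W[T]=\dim W[\mbox{lower, full}]+\dim W[\mbox{top, reduced}]$, which is a different identity from the hypothesis $\dim W[L(\mathscr{T}_2)]=\dim W[L_2]+\dim W[L(\mathscr{T}_2\backslash L_2)]$ that Lemma \ref{surject2} requires.

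This is precisely why the paper peels off the \emph{lowest} level rather than the highest. There the complement in Lemma \ref{surject1} is $T_1$ itself, and for components of $T_1$ dividing at least two cells the reasonable-order computation of Lemma \ref{dimT_i} gives $\dim W[\widetilde{T_1}]=\widetilde{V}_1^+-\widetilde{E}_1=\widetilde{V}_1-3\widetilde{E}_1$, i.e.\ genuine full row rank, so Lemma \ref{surject1} applies to those components. The single-cell components $\overline{T_1}$, for which full row rank fails, are handled by an entirely different argument, not by any extension or reasonable-order trick: one forms the auxiliary mesh $\mathscr{T}'$ from $\overline{T}$ and the edges of the subdivided cell, uses Lemma \ref{tensor} to get $\dim\overline{\mathbf{S}}^2(\mathscr{T}')=\dim W[\overline{T'}]$, and then sandwiches $\dim W[\overline{T}]$ between $\dim W[\overline{T'}]$ and $\dim W[\overline{T'}]+\dim W[\overline{T_1}]=\dim W[\overline{T'}]$ via Equation \eqref{equ2.2}. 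To salvage your top-down induction you would need a new proof of the surjectivity of $W[T]\to W[T_n]$ that remains valid in the presence of lower-level trivial components; Corollary \ref{general}, Lemma \ref{surject1} and Lemma \ref{surject2} do not supply one.
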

\begin{proof} We only need to prove that
$\dim W[T]=\dim W[T_1]+\dim W[T']$. According to Lemma \ref{dimT_i}, we suppose $T_1$ has only two connected components
$\overline{T_1}$ and $\widetilde{T_1}$. $\overline{T_1}$ divides only one cell of $\mathscr{T}^0$, while $\widetilde{T_1}$ divides more. Correspondingly, $T$ also has two connected components; say they are $\overline{T}$ and $\widetilde{T}$. $T'$ is divided into two parts; say they are $\overline{T'}$ and $\widetilde{T'}$, where $\overline{T'}\subseteq \overline{T}$, $\widetilde{T'}\subseteq \widetilde{T}$.

Because $\dim W[\widetilde{T_1}]=\widetilde{V}_1^+-\widetilde{E}_1$, where $\widetilde{V}_1^+$ and $\widetilde{E}_1$ are the numbers of the crossing-vertices
and the l-edges of $\widetilde{T_1}$, respectively, according to Lemma \ref{surject1}, we have $\dim W[\widetilde{T}]=\dim W[\widetilde{T_1}]+\dim W[\widetilde{T'}]$.
For $\overline{T_1}$, we consider the mesh $\mathscr{T}'$ generated by $\overline{T}$ and the edges of the cell of $\mathscr{T}^0$ divided by $\overline{T_1}$. Then, we have
\begin{align*}
\dim W[\overline{T'}] & =  \dim \overline{\mathbf{S}}^2(\mathscr{T}')
                       \leqslant  \dim W[\overline{T}]
                       \leqslant  \dim W[\overline{T'}]+\dim W[\overline{T_1}]
                    =  \dim W[\overline{T'}].
\end{align*}
That is, $\dim W[\overline{T}]=\dim W[\overline{T'}]=\dim W[\overline{T'}]+\dim W[\overline{T_1}]$, which proves the lemma.
\end{proof}

Now, we can give a new proof for the dimension formula of $\overline{\mathbf{S}}^2(\mathscr{T})$ provided in \cite{Deng13}.
\begin{theorem}[\cite{Deng13}]\label{hbc2}
Suppose $\mathscr{T}$ is a hierarchical T-mesh with $V^+$ crossing-vertices, $E$ interior l-edges and
$\delta_{1i}$ connected components in $T_i(i\geqslant 1)$ that divide only one cell of $\mathscr{T}^{i-1}$. Denote
$\delta_1=\sum_{i=1}^n\delta_{1i}$. If $\mathscr{T}^0$ has at least $3$ horizontal l-edges and $3$ vertical l-edges,
then $$\dim \overline{\mathbf{S}}^2(\mathscr{T})=V^+-E+\delta_1+1.$$
\end{theorem}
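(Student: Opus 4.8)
The plan is to read the asserted formula off the two-step decomposition flagged just before Lemma~\ref{+}, feeding in the per-level counts that Lemmas~\ref{t0_2} and~\ref{dimT_i} already supply; in the spirit of this section the whole argument is routed through the conformality-space machinery of Section~\ref{smocof} rather than through the explicit maps of~\cite{Deng13}, which is what makes it concise.

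First I would produce the coarse-versus-refinement splitting. Apply Lemma~\ref{tensor} with the tensor-product submesh $\mathscr{T}_{\bigotimes}$ chosen to be the level-$0$ mesh $\mathscr{T}^0$. This is legitimate because $\mathscr{T}^0$ consists precisely of the boundary l-edges of $\mathscr{T}$ together with the level-$0$ cross-cuts, and by hypothesis it carries at least $3$ horizontal and $3$ vertical l-edges, which is exactly the $d+1=3$ requirement of Lemma~\ref{tensor}. The complementary family $L$ in that lemma is then the collection $T=T_1^o\cup\cdots\cup T_n^o$ of all level-$\geqslant 1$ refinement l-edges, so Lemma~\ref{tensor} gives
$$
\dim \overline{\mathbf{S}}^2(\mathscr{T})=\dim \overline{\mathbf{S}}^2(\mathscr{T}^0)+\dim W[T].
$$

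Next I would insert the remaining lemmas. Lemma~\ref{+} splits the refinement contribution across levels as $\dim W[T]=\sum_{i=1}^n\dim W[T_i]$; Lemma~\ref{dimT_i} evaluates each term as $\dim W[T_i]=V_i^+-E_i+\delta_{1i}$; and Lemma~\ref{t0_2} gives the base term $\dim \overline{\mathbf{S}}^2(\mathscr{T}^0)=V_0^+-E_0+1$. Summing over levels then yields
\begin{align*}
\dim \overline{\mathbf{S}}^2(\mathscr{T})
&=\bigl(V_0^+-E_0+1\bigr)+\sum_{i=1}^n\bigl(V_i^+-E_i+\delta_{1i}\bigr)\\
&=\Bigl(\sum_{i=0}^nV_i^+\Bigr)-\Bigl(\sum_{i=0}^nE_i\Bigr)+1+\sum_{i=1}^n\delta_{1i},
\end{align*}
and substituting the identities $V^+=\sum_{i=0}^nV_i^+$, $E=\sum_{i=0}^nE_i$ and $\delta_1=\sum_{i=1}^n\delta_{1i}$ delivers $\dim \overline{\mathbf{S}}^2(\mathscr{T})=V^+-E+\delta_1+1$.

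With all four inputs in place the theorem is pure bookkeeping, so the real content sits inside the lemmas. I expect the decisive obstacle to be the across-level additivity recorded in Lemma~\ref{+}: one must guarantee that the conformality constraints imposed at distinct refinement levels do not interact so as to push $\rank M$ below the sum of the per-level ranks. The delicate sub-case is a connected component of $T_i$ that subdivides a single cell of $\mathscr{T}^{i-1}$: its refinement l-edges mutually intersect, so no reasonable order exists and the surjectivity argument of Lemma~\ref{surject1} does not apply; such a component must instead be treated by squeezing its conformality dimension between two coinciding bounds, and it is exactly this phenomenon that produces the $+\delta_{1i}$ correction, hence the $+\delta_1$ term in the final formula.
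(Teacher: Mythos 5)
Your proposal is correct and follows exactly the paper's own proof: the same application of Lemma~\ref{tensor} with $\mathscr{T}_{\bigotimes}=\mathscr{T}^0$, followed by Lemma~\ref{+}, Lemma~\ref{dimT_i}, and Lemma~\ref{t0_2}, then summing over levels. Your closing remarks correctly identify where the actual difficulty is absorbed (the single-cell components lacking a reasonable order, handled by the squeeze argument in Lemma~\ref{+}, which is the source of the $\delta_1$ term).
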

\begin{proof}According to Lemma~\ref{t0_2}, Lemma~\ref{dimT_i}, Lemma~\ref{tensor} and Lemma~\ref{+}, we have
\begin{align*}
\dim \overline{\mathbf{S}}^2(\mathscr{T}) &= \dim \overline{\mathbf{S}}^2(\mathscr{T}^0)+\dim W[T]  \\
 &= V_0^+-E_0+1 + \sum_{i=1}^{n} \dim W[T_{i}]  \\
 &=V_0^+-E_0+1 + \sum_{i=1}^{n} (V_i^{+}-E_i+\delta_{1i})  \\
 &= V^+-E+\delta_1+1.
\end{align*}
\end{proof}

\begin{theorem} \label{nonhbc2}
Suppose $\mathscr{T}$ is a hierarchical T-mesh with $V^+$ crossing-vertices, $V^b$ boundary vertices, $E$ interior l-edges and
$\delta_{1i}$ connected components in $T_i(i\geqslant 1)$ that divide only one interior cell of $\mathscr{T}^{i-1}$. Denote
$\delta_1=\sum_{i=1}^n\delta_{1i}$. Then $$\mathbf{S}^2(\mathscr{T})=2V^b+V^+-E+\delta_1+1.$$
\end{theorem}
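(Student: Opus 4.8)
The plan is to reduce the statement to the homogeneous case already settled in Theorem~\ref{hbc2}, via the extended T-mesh. First I would invoke Theorem~\ref{extended} to pass from the ordinary space to the HBC space over the extended mesh, $\dim\mathbf{S}^2(\mathscr{T})=\dim\overline{\mathbf{S}}^2(\mathscr{T}^{\varepsilon})$. Since copying each boundary l-edge $2$ times adds grid lines on every side, the level-$0$ mesh of $\mathscr{T}^{\varepsilon}$ easily has at least three horizontal and three vertical l-edges, and $\mathscr{T}^{\varepsilon}$ inherits the level structure (the frame sitting at level $0$, the extended rays keeping their original levels), so Theorem~\ref{hbc2} applies and gives $\dim\overline{\mathbf{S}}^2(\mathscr{T}^{\varepsilon})=(V^+)^{\varepsilon}-E^{\varepsilon}+\delta_1^{\varepsilon}+1$, where the superscript $\varepsilon$ denotes the corresponding quantities of $\mathscr{T}^{\varepsilon}$. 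Everything then reduces to the combinatorial identity $(V^+)^{\varepsilon}-E^{\varepsilon}+\delta_1^{\varepsilon}=2V^b+V^+-E+\delta_1$.

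Next I would track how each quantity changes under the extension. Writing $n_c$ and $n_r$ for the numbers of cross-cuts and rays of $\mathscr{T}$, every non-corner boundary vertex is the endpoint of exactly one perpendicular cross-cut or ray, so $V^b=4+2n_c+n_r$. For the interior l-edges, the four boundary l-edges of $\mathscr{T}$ become interior cross-cuts of $\mathscr{T}^{\varepsilon}$, and the copied frame contributes one further interior cross-cut on each of the four sides, while T~l-edges, rays and the original cross-cuts merely get lengthened and keep their type; hence $E^{\varepsilon}=E+8$. For the crossing-vertices I would split the new ones into two groups: (a) every boundary vertex of $\mathscr{T}$ becomes a crossing-vertex, since its incident l-edge (or, at a corner, both boundary l-edges) is now extended straight through it, giving $V^b$; and (b) the crossings created strictly inside the frame, where the extended cross-cuts and rays meet the new frame lines. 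Counting the second group band by band and keeping track of which rays (top/bottom, left/right) reach which band yields $12+2n_c+n_r=V^b+8$. Thus $(V^+)^{\varepsilon}-V^+=2V^b+8$.

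It remains to show $\delta_1^{\varepsilon}=\delta_1$, and here the contrast between ``interior cell'' in the present statement and ``cell'' in Theorem~\ref{hbc2} is exactly what makes things balance. An interior single-cell $2\times2$ division of $\mathscr{T}$ lies away from the boundary, is untouched by the extension, and hence remains a single-cell component of $T_i(\mathscr{T}^{\varepsilon})$, counted by both $\delta_1$ and $\delta_1^{\varepsilon}$. A single-cell division of a boundary cell always has at least one arm on the boundary, i.e.\ a ray; upon extension that ray runs out into the frame, so the component now spans several cells of $(\mathscr{T}^{\varepsilon})^{i-1}$ and is no longer single-cell, and is excluded from $\delta_1^{\varepsilon}$ exactly as it is excluded from $\delta_1$. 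No single-cell components arise inside the unrefined frame. Assembling the three relations gives $(V^+)^{\varepsilon}-E^{\varepsilon}+\delta_1^{\varepsilon}=(V^++2V^b+8)-(E+8)+\delta_1=2V^b+V^+-E+\delta_1$, which combined with the first two steps proves $\dim\mathbf{S}^2(\mathscr{T})=2V^b+V^+-E+\delta_1+1$.

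The hard part will be the crossing-vertex bookkeeping of group (b): one must check that each extended cross-cut and each ray produces exactly the claimed new frame crossings with no coincidences or double counts, and confirm the clean relation $V^b=4+2n_c+n_r$ so that the frame total collapses to $V^b+8$. The identity $\delta_1^{\varepsilon}=\delta_1$ is conceptually the most delicate point, but it is short once one observes that ray extension is precisely what destroys the single-cell property of boundary-cell divisions.
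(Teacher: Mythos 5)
Your overall route coincides with the paper's: reduce to $\overline{\mathbf{S}}^2(\mathscr{T}^{\varepsilon})$ via Theorem~\ref{extended}, apply Theorem~\ref{hbc2} to the extended mesh, and verify the bookkeeping $(V^+)^{\varepsilon}=V^++2V^b+8$, $E^{\varepsilon}=E+8$, $\delta_1^{\varepsilon}=\delta_1$. Your counts are correct, and indeed more detailed than the paper, which disposes of them with ``it is easy to check''; in particular, your observation that a single-cell refinement of a \emph{boundary} cell loses the single-cell property after extension (its ray runs out into the frame) is exactly the reason why ``interior cell'' in the present statement matches ``cell'' in Theorem~\ref{hbc2}.

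There is, however, one genuine gap: you invoke Theorem~\ref{hbc2} on $\mathscr{T}^{\varepsilon}$ as a black box, justified only by saying that $\mathscr{T}^{\varepsilon}$ ``inherits the level structure.'' That justification is insufficient, because whenever some boundary cell of $\mathscr{T}$ has been refined, $\mathscr{T}^{\varepsilon}$ is \emph{not} a hierarchical T-mesh of $2\times2$ division at all: the refining l-edges, once extended, protrude beyond the cell they subdivide and partially cut level-$0$ frame cells, something the recursive $2\times2$ subdivision process can never produce. So the hypothesis of Theorem~\ref{hbc2} is literally not satisfied, and possessing a level structure is not by itself enough to guarantee that dimensions can be computed level by level --- the paper's Figure~\ref{nolevel} exhibits precisely such a failure (for $d=3$). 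The paper closes this gap with a short but essential remark: after extension, every l-edge of $(\mathscr{T}^{\varepsilon})^{i}$, $i\geqslant 1$, has at least as many (in fact more) vertices than the corresponding original l-edge, and the proof of Theorem~\ref{hbc2} (through Lemma~\ref{dimT_i}, Lemma~\ref{+} and Lemma~\ref{tensor}) only needs lower bounds on the number of vertices per l-edge to produce reasonable orders; hence the \emph{conclusion} of Theorem~\ref{hbc2}, if not its hypothesis, holds for $\mathscr{T}^{\varepsilon}$. Your proof needs this argument (or an equivalent one) inserted at the point where you apply Theorem~\ref{hbc2}; the rest stands as written.
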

\begin{proof}
According to Theorem \ref{extended}, we have only to compute $\dim \overline{\mathbf{S}}^2(\mathscr{T}^\varepsilon)$. It is easy
to check that $\mathscr{T}^{\varepsilon}$ has $2V^b+V^++8$ crossing-vertices, $E+8$ interior l-edges and $\delta_{1i}$
connected components in $T^\varepsilon_i(i\geqslant 1)$ that divide only one cell of $\mathscr{T}^{\varepsilon^{i-1}}$.

We should notice that $\mathscr{T}^\varepsilon$ is not a hierarchical T-mesh of $2\times 2$ division if any boundary cell of $\mathscr{T}$ is divided.
However, after extending, every l-edge of $\mathscr{T}^{\varepsilon^i}(i\geqslant 1)$ has more vertices than the original l-edge. Therefore, Theorem
\ref{hbc2} holds for $\mathscr{T}^\varepsilon$. Using the conclusion of Theorem \ref{hbc2} on $\mathscr{T}^\varepsilon$, we complete the proof.
\end{proof}

\section{Dimensions of $\mathbf{S}^3(\mathscr{T})$ over Hierarchical T-Meshes}\label{3322}

The difficulty in generalizing the above method to $\mathbf{S}^3(\mathscr{T})$ over a hierarchical T-mesh
is the fact that
the equation $\dim W[T]=\dim W[T_1]+\dim W[T']$ is not always true. For example, as indicated in Figure \ref{nolevel},
it is easy to compute that $\dim W[T_1]=0$, $\dim W[T_2]=6$, but $\dim W[T]\leqslant 5$. This tells us that we can not compute the
dimension of $\mathbf{S}^3(\mathscr{T})$ level by level if we consider general hierarchical T-meshes of $2\times2$  division. Thus, we
consider hierarchical T-meshes with some restrictions. In Section \ref{N(t)}, we consider hierarchical T-meshes with $N(t)\geqslant 2$ for any l-edge $t$ of $T$, and in Section \ref{3*3}, we consider hierarchical T-meshes of $3\times 3$ division. The symbols in this section have the same meanings as in Section \ref{2211}.
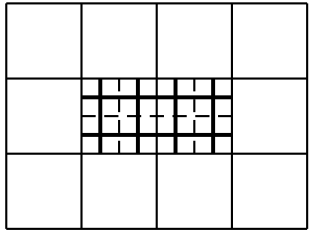
\begin{figure}[!ht]
\begin{center}
\begin{pspicture}(0,0)(4,4)
\psline(0,0)(0,3)\psline(1,0)(1,3)\psline(2,0)(2,3)\psline(3,0)(3,3)\psline(4,0)(4,3)\psline(0,0)(4,0)\psline(0,1)(4,1)\psline(0,2)(4,2)\psline(0,3)(4,3)
\psline[linewidth=1.5pt](1,1.25)(3,1.25)\psline[linestyle=dashed](1,1.5)(3,1.5)\psline[linewidth=1.5pt](1,1.75)(3,1.75)
\psline[linewidth=1.5pt](1.25,1)(1.25,2)\psline[linestyle=dashed](1.5,1)(1.5,2)\psline[linewidth=1.5pt](1.75,1)(1.75,2)
\psline[linewidth=1.5pt](2.25,1)(2.25,2)\psline[linestyle=dashed](2.5,1)(2.5,2)\psline[linewidth=1.5pt](2.75,1)(2.75,2)
\end{pspicture}
\caption{An example of a T-mesh. \label{nolevel}}
\end{center}
\end{figure}

\subsection{Hierarchical T-meshes with $N(t)\geqslant 2$}\label{N(t)}
\begin{lemma}\label{t0_3}
If $\mathscr{T}^0$ has at least $4$ horizontal l-edges and $4$ vertical l-edges,
then $\dim \overline{\mathbf{S}}^3(\mathscr{T}^0)=V_0^+ -2E_0+4$.
\end{lemma}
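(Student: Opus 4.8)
The plan is to treat this as the exact $d=3$ counterpart of Lemma~\ref{t0_2} and to run the same argument verbatim with $d+1=4$ in place of $d+1=3$. Since $\mathscr{T}^0$ is a tensor-product mesh, I would write it as an $m\times n$ grid, and observe that the hypothesis of at least $4$ horizontal and $4$ vertical l-edges translates precisely into $m\geqslant 4$ and $n\geqslant 4$. The entire proof then reduces to three elementary counts together with one algebraic identity.

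First I would record the combinatorial data. The crossing-vertices of an $m\times n$ tensor-product mesh are exactly the intersections of its $m-2$ interior vertical grid lines with its $n-2$ interior horizontal grid lines, so $V_0^+=(m-2)(n-2)$; the interior l-edges are the interior cross-cuts, of which there are $(m-2)+(n-2)$, so $E_0=m+n-4$. These are the same counts used in the proof of Lemma~\ref{t0_2}. Next I would invoke the dimension of the homogeneous-boundary tensor-product spline space, which for bi-degree $(d,d)$ with maximal smoothness is $\dim\overline{\mathbf{S}}^d(\mathscr{T}^0)=(m-d-1)(n-d-1)$ — the formula already used in the proof of Lemma~\ref{tensor}; for $d=3$ this reads $\dim\overline{\mathbf{S}}^3(\mathscr{T}^0)=(m-4)(n-4)$. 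Finally I would verify the identity
$$
(m-4)(n-4)=(m-2)(n-2)-2(m+n-4)+4=V_0^+-2E_0+4,
$$
which is a direct expansion, and this completes the proof.

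The only step meriting care is the tensor-product dimension $(m-4)(n-4)$ itself. It is the number of interior tensor-product bicubic $C^2$ basis functions, equivalently the product of the two univariate homogeneous-boundary spline-space dimensions $m-4$ and $n-4$ (each univariate degree-$3$, $C^2$ spline space has dimension $m+d-1=m+2$, from which $2d=6$ boundary vanishing conditions are subtracted, leaving $m-4$). This formula is valid only when neither factor must be truncated to zero, i.e. when $m\geqslant 4$ and $n\geqslant 4$, which is exactly why the hypothesis demands at least $4=d+1$ l-edges in each direction, mirroring the $3=d+1$ requirement in Lemma~\ref{t0_2}. Once that dimension count is granted, everything else is bookkeeping and the claimed equality $\dim\overline{\mathbf{S}}^3(\mathscr{T}^0)=V_0^+-2E_0+4$ follows immediately.
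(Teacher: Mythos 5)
Your proposal is correct and follows essentially the same argument as the paper: write $\mathscr{T}^0$ as an $m\times n$ tensor-product mesh, count $V_0^+=(m-2)(n-2)$ and $E_0=m+n-4$, invoke $\dim\overline{\mathbf{S}}^3(\mathscr{T}^0)=(m-4)(n-4)$, and expand the identity. The paper states these facts in one line without the justification of the tensor-product dimension that you supply, but the content is identical.
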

\begin{proof}Suppose $\mathscr{T}^0$ is an $m\times n$ tensor-product mesh. Then, $V_{0}^{+} = (m-2)(n-2)$, $E_{0} = m+n-4$, and
$\dim \overline{\mathbf{S}}^{3}(\mathscr{T}^0)=(m-4)(n-4) = V_0^+-2E_0+4$.
\end{proof}

\begin{lemma}\label{dimT_i3}
Suppose $T_i(i\geqslant 1)$ has $\delta_{4i}$ connected components that only divide the $2\times2$ neighbor cells in $\mathscr{T}^{i-1}$
(see Figure~\ref{well}.a). Then,
$ \dim W[T_i]=V_i^{+}-2E_i+\delta_{4i}$.
\end{lemma}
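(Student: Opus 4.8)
The plan is to mirror the structure of the proof of Lemma~\ref{dimT_i}, which handled the $d=2$ case, and adapt every step to $d=3$. As before, Lemma~\ref{lemma:1} lets me reduce to a single connected component $T_{i_1}$ of $T_i$: it suffices to show that
$$
\dim W[T_{i_1}]=\begin{cases}
V_{i_1}^+-2E_{i_1}+1, & T_{i_1} \text{ only divides the } 2\times2 \text{ neighbor cells of } \mathscr{T}^{i-1},\\
V_{i_1}^+-2E_{i_1}, & \text{otherwise},
\end{cases}
$$
and then sum over the $m_i$ components, picking up $+1$ exactly $\delta_{4i}$ times. First I would record the incidence identity for a component: since $\mathscr{T}$ is of $2\times2$ division, every l-edge in $T_{i_1}$ is crossed on both sides by l-edges of the \emph{same} level, so counting crossing-vertices against interior versus endpoint vertices gives $V_{i_1}=V_{i_1}^++2E_{i_1}$, exactly as in the $d=2$ computation. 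This identity is what converts a reasonable-order count $V_{i_1}-(d+1)E_{i_1}=V_{i_1}-4E_{i_1}$ into $V_{i_1}^+-2E_{i_1}$.

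\textbf{The generic (reasonable-order) case.} The main tool is Corollary~\ref{general}: if the l-edges of $T_{i_1}$ admit a reasonable order, then $\dim W[T_{i_1}]=V_{i_1}-4E_{i_1}=V_{i_1}^+-2E_{i_1}$. So the heart of the matter is to produce a reasonable order whenever $T_{i_1}$ is \emph{not} one of the small exceptional configurations. Under the hypothesis $N(t)\geqslant 2$ for every l-edge $t$, each l-edge $t$ of level $i$ crosses at least two cells of $\mathscr{T}^{i-1}$; since each such cell of the coarser level is subdivided $2\times2$, the new level-$i$ grid deposits several new vertices along $t$. I would argue that, processing the l-edges in essentially any order, each newly added l-edge contributes at least $d+1=4$ vertices not already lying on previously chosen l-edges — the count being forced by $N(t)\geqslant 2$ together with the $2\times2$ subdivision pattern, which guarantees interior grid vertices along $t$ beyond its two endpoints. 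This is the step that the $N(t)\geqslant 2$ restriction is designed to make clean, in contrast to the counterexample of Figure~\ref{nolevel} where short l-edges break the level-by-level counting.

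\textbf{The exceptional case.} When $T_{i_1}$ divides exactly the $2\times2$ block of neighbor cells of $\mathscr{T}^{i-1}$ (Figure~\ref{well}.a), no reasonable order exists, and I would instead compute $\dim W[T_{i_1}]$ directly. This configuration is a fixed, bounded pattern: one identifies the interior l-edges of the refined block and their vertices, writes down the conformality systems $\mathscr{S}_{l}=0$ of degree $d=3$, and evaluates the rank of the resulting conformality matrix. The outcome should be $\dim W[T_{i_1}]=V_{i_1}^+-2E_{i_1}+1$, i.e.\ exactly one more than the generic formula, reflecting the single nontrivial cofactor relation that survives on this symmetric block. Because the pattern is fixed up to translation and scaling, this is a finite (if slightly tedious) Vandermonde-type rank computation rather than a family of cases.

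\textbf{Main obstacle.} The delicate point is not the exceptional case — that is a one-off matrix computation — but rigorously certifying the reasonable order in the generic case. I expect the hard part to be verifying that every configuration satisfying $N(t)\geqslant 2$ genuinely admits an ordering in which each l-edge contributes $\geqslant 4$ fresh vertices, since one must rule out degenerate overlaps where several l-edges share most of their vertices with earlier ones. The argument must exploit that distinct level-$i$ l-edges crossing a common coarser cell meet only at grid vertices and that $N(t)\geqslant 2$ forces enough length for each l-edge to retain new vertices after the others are accounted for; making this combinatorial bookkeeping airtight across all admissible components is where the real work lies.
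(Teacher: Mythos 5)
Your skeleton matches the paper's proof of Lemma~\ref{dimT_i3}: reduce to one connected component, use $V_{i_1}=V_{i_1}^{+}+2E_{i_1}$, settle the generic case by exhibiting a reasonable order (Corollary~\ref{general}), and treat the $2\times2$ block separately. But the generic case — which you yourself flag as the ``main obstacle'' — is exactly where the gap lies, and the claim you make there is false as stated: it is \emph{not} true that ``essentially any order'' works. An l-edge $t$ of $T_i$ with $N(t)=2$ has exactly $2N(t)+1=5$ vertices, so if it is processed after two l-edges that it intersects, it contributes only $3<d+1=4$ fresh vertices and the order fails. The paper supplies the missing combinatorial idea that you leave unresolved. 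Two l-edges of $T_i$ can intersect only at the center of a cell of $\mathscr{T}^{i-1}$; hence an l-edge with $N(t)=2$ meets at most two other l-edges of $T_i$, so the intersection graph of the $N(t)=2$ l-edges has maximum degree $2$ and is a disjoint union of paths and cycles. The paper orders the $N(t)=2$ l-edges \emph{first}, along these paths, so that each meets at most one predecessor (hence keeps at least $4$ fresh vertices), and puts the l-edges with $N(t)\geqslant 3$ \emph{last}: such an l-edge has $2N+1$ vertices of which only the $N$ cell centers can lie on other l-edges of $T_i$, leaving $N+1\geqslant 4$ private vertices no matter where it appears in the order. The same analysis yields the dichotomy your argument needs but never establishes: a cycle among $N(t)=2$ l-edges forces a $4$-cycle whose four cells form a $2\times2$ block, and then no further l-edge of $T_i$ can meet any of these four, so the whole component is precisely the configuration of Figure~\ref{well}.a. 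Without this dichotomy you cannot conclude that the exceptional components are exactly the ones counted by $\delta_{4i}$.

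A second, smaller inaccuracy concerns the exceptional case. The $2\times2$ block configuration is \emph{not} ``fixed up to translation and scaling'': the four coarse cells may have different widths and heights, so your proposed rank computation is a computation with parameters, and the answer $\dim W[T_{i_1}]=1$ (rather than $0$) is not a purely combinatorial fact — it holds because the level-$i$ l-edges pass through the \emph{midpoints} of the coarser cells. This is exactly the cycle-consistency that fails in the paper's case 2.d of Section~\ref{3*3}, where propagating co-factors around an analogous cycle gives a contraction ($0<ac<1$) and forces the trivial solution. The paper sidesteps the computation entirely by quoting \cite{Wu13} for $\dim W[T_{i_1}]=1$; your direct route is viable, but only if carried out for the full parametric family with the midpoint structure made explicit.
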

\begin{proof}
Similar to the discussion in the last section, we only consider $\dim W[T_{i_{1}}]$. Suppose $T_{i_{1}}$
has $V_{i_{1}}$ vertices, $V_{i_{1}}^+$ crossing-vertices and $E_{i_{1}}$ l-edges.
Then $V_{i_{1}}=V_{i_{1}}^{+} + 2E_{i_{1}}$. We also assume $t_1,t_2,\ldots,t_k,\ldots,t_r$ are all of the l-edges of $T_{i1}$,
where $N(t_i)=2$ when $i\leqslant k$; $N(t_i)>2$ when $i\geqslant k+1$.
For $i=1,2\ldots,k$, if there is an order, say $t_{j_1},t_{j_2},\ldots,t_{j_k}$, such that $t_{j_{p}}$ only intersects
with at most one l-edge of $t_{j_q}$, $q<p$, then $t_{j_1},t_{j_2},\ldots,t_{j_k}, t_{k+1}, \dots, t_{r}$ form a reasonable order. Hence,
$\dim W[T_{i_1}]=V_{i_1}-4E_{i_1}=V_{i_1}^+-2E_{i_1}$.

Otherwise, $T_{i_1}$ only divides the $2\times2$ neighbor cells in $\mathscr{T}^{i-1}$.
Then, we can obtain $\dim W[T_{i_1}] = 1$ according to~\cite{Wu13}. In this case, $\dim W[T_{i_1}]=1=V^+_{i_1}-2E_{i_1}+1$. Thus, if
$\delta_{4i}$ is the number of the connected components that only divide the $2\times2$ neighbor cells in $\mathscr{T}^{i-1}$, then
$ \dim W[T_i]=V_i^{+}-2E_i+\delta_{4i}$.
\end{proof}

\begin{figure}[!ht]
\begin{center}
\begin{tabular}{c@{\hspace*{1.5cm}}c}
\begin{pspicture}(0,0)(3,3)
\psline(0,0.5)(3,0.5)\psline[linestyle=dashed](0.5,1)(2.5,1)\psline(0,1.5)(3,1.5)\psline[linestyle=dashed](0.5,2)(2.5,2)\psline(0,2.5)(3,2.5)
\psline(0.5,0)(0.5,3)\psline[linestyle=dashed](1,0.5)(1,2.5)\psline(1.5,0)(1.5,3)\psline[linestyle=dashed](2,0.5)(2,2.5)\psline(2.5,0)(2.5,3)
\psline{->}(0.25,0.75)(0.5,0.75)\psline{->}(0.25,2.25)(1,2.25)
\rput[r](0.2,0.75){\small edge of level $i-1$}\rput[r](0.2,2.25){\small edge of level $i$}
\end{pspicture}  &
\begin{pspicture}(0,0)(3,3)
\psline(0.5,0.5)(2.5,0.5)\psline[linestyle=dashed](0.5,1)(2.5,1)\psline(0.5,1.5)(2.5,1.5)\psline[linestyle=dashed](0.5,2)(2.5,2)\psline(0.5,2.5)(2.5,2.5)
\psline(0.5,0.5)(0.5,2.5)\psline[linestyle=dashed](1,0.5)(1,2.5)\psline(1.5,0.5)(1.5,2.5)\psline[linestyle=dashed](2,0.5)(2,2.5)\psline(2.5,0.5)(2.5,2.5)
\end{pspicture} \\
a  & b
\end{tabular}
\caption{The special connected component $T_i$ (a) and the new mesh $\mathscr{T}'$ (b) \label{well}}
\end{center}
\end{figure}
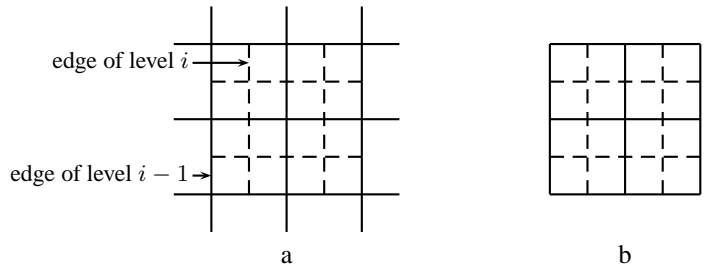

\begin{lemma}\label{++}
For $\overline{\mathbf{S}}^3(\mathscr{T})$, let $T=T_1^o\cup T_2^o\cup\cdots \cup T_n^o$. If $N(t) \geqslant 2$ for any l-edge $t$, we have
$$
\dim W[T]=\sum_{i=1}^{n}\dim W[T_i].
$$
\end{lemma}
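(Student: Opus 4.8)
The plan is to prove Lemma~\ref{++} by reducing the global statement $\dim W[T]=\sum_{i=1}^n \dim W[T_i]$ to a single level-peeling step, exactly mirroring the structure used for the biquadratic case in Lemma~\ref{+}. That is, it suffices to establish the identity
$$
\dim W[T]=\dim W[T_1]+\dim W[T'],
$$
where $T'=T_2^o\cup\cdots\cup T_n^o$, and then to iterate, applying the same argument to $T'$ (whose lowest level is now $2$), and so on down to $T_n$. The summand $\dim W[T_1]$ is already controlled by Lemma~\ref{dimT_i3}, and the inductive hypothesis handles $\dim W[T']$, so the entire content is in that one splitting identity.

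To prove the splitting identity, I would invoke Lemma~\ref{surject1}, which says that if $L(\mathscr{T}\backslash L)$ has $V$ vertices and $E$ l-edges with $\dim W[L(\mathscr{T}\backslash L)]=V-(d+1)E=V-4E$ (here $d=3$), then the desired additivity holds. Concretely, I take $L=T_1$ (the bottom-level l-edges) and I must show that the remaining l-edge set behaves like a full-rank (reasonable-order) configuration, i.e. that peeling off $T_1$ leaves a system whose conformality space has the generic dimension $V-4E$. This is precisely where the hypothesis $N(t)\geqslant 2$ for every l-edge enters: in the proof of Lemma~\ref{dimT_i3} the reasonable order required each l-edge to contribute at least $d+1=4$ new vertices, and $N(t)\geqslant 2$ guarantees that every l-edge crosses at least two cells of the previous level, hence carries enough interior vertices (at least three interior plus the endpoints) to meet the $4$-new-vertex threshold once the lower-level structure is already present.

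The main obstacle — and the reason the hypothesis $N(t)\geqslant 2$ is imposed — is the phenomenon illustrated in Figure~\ref{nolevel}: without it, a connected component at level $2$ can have $\dim W[T_2]$ strictly larger than what survives in $\dim W[T]$, so the naive level-by-level additivity fails. The delicate point is therefore to verify that, under $N(t)\geqslant 2$, no such collapse occurs across adjacent levels, equivalently that the off-diagonal block $C$ in the block decomposition $M=\begin{pmatrix}A&\\ C&B\end{pmatrix}$ of the conformality matrix never forces $\rank M$ to drop below $\rank A+\rank B$. I expect this to follow because $B$ (the conformality matrix of the higher levels after removing $T_1$) is of full row rank: every higher-level l-edge still crosses at least two cells of \emph{its own} previous level, so the reasonable-order argument of Corollary~\ref{general} applies to the higher-level l-edges independently of $T_1$. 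Once full row rank of $B$ is secured, Lemma~\ref{surject1} delivers the splitting immediately, and the induction closes.
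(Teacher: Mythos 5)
Your reduction to the single splitting identity $\dim W[T]=\dim W[T_1]+\dim W[T']$, followed by iteration over levels, is indeed the paper's strategy, but the way you realize the splitting step has a genuine gap: you invoke Lemma~\ref{surject1} with $L=T_1$, which places the full-rank hypothesis on the \emph{complement}, i.e.\ on the block $B$ built from the higher-level l-edges $T_2^o\cup\cdots\cup T_n^o$ (restricted to the vertices not lying on $T_1$), and your justification that this $B$ has full row rank is false. The hypothesis $N(t)\geqslant 2$ does \emph{not} imply that the higher levels admit a reasonable order: a connected component of $T_i$, $i\geqslant 2$, that refines exactly a $2\times 2$ block of cells of $\mathscr{T}^{i-1}$ (Figure~\ref{well}.a) has $N(t)=2$ for each of its four l-edges, yet in any ordering the last l-edge meets at most $3$ new vertices, so Corollary~\ref{general} does not apply; its conformality matrix is rank-deficient by exactly one, which is precisely the origin of the $\delta_{4i}$ term in Lemma~\ref{dimT_i3} and Theorem~\ref{thm6.4}. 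Such components are allowed at every level $i\geqslant 2$ under the hypotheses of the lemma, so your full-row-rank claim for $B$, and with it your application of Lemma~\ref{surject1}, breaks down. (A further problem: since $B$ only sees vertices off $T_1$, a level-$2$ l-edge can lose its endpoints and mid-crossings to the off-diagonal block $C$, so even generic higher-level components need not supply $4$ usable vertices per l-edge.)

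The paper applies Lemma~\ref{surject1} in the opposite direction: it takes $L=T'$ and puts the full-rank condition on the \emph{bottom} level, which holds for every connected component $\widetilde{T_1}$ of $T_1$ \emph{except} those refining exactly a $2\times2$ block of $\mathscr{T}^0$, since then $\dim W[\widetilde{T_1}]=\widetilde{V}_1^+-2\widetilde{E}_1=\widetilde{V}_1-4\widetilde{E}_1$. The exceptional components are rank-deficient, so no version of Lemma~\ref{surject1} can treat them, and the paper handles them by an argument entirely absent from your proposal: for such a component $\overline{T_1}$, with $\overline{T}$ the corresponding component of $T$ and $\overline{T'}\subseteq\overline{T}$ its higher-level part, it forms the auxiliary mesh $\mathscr{T}'$ generated by $\overline{T}$ and the edges of the refined $2\times2$ block, obtains $\dim\overline{\mathbf{S}}^3(\mathscr{T}')=1+\dim W[\overline{T'}]$ from Lemma~\ref{tensor}, and closes the argument by the squeeze
$$
1+\dim W[\overline{T'}]=\dim\overline{\mathbf{S}}^3(\mathscr{T}')\leqslant\dim W[\overline{T}]\leqslant\dim W[\overline{T_1}]+\dim W[\overline{T'}]=1+\dim W[\overline{T'}],
$$
the upper bound being the subadditivity \eqref{equ2.2}. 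Without this case analysis, and with the incorrect rank claim, your proof fails exactly at the configurations that produce the $\delta_4$ contribution in the final dimension formula.
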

\begin{proof}
Let $T'=T_2^o\cup\cdots \cup T_n^o$. Then, we have only to prove that
$\dim W[T]=\dim W[T_1]+\dim W[T']$. According to Lemma \ref{dimT_i3}, we suppose $T_1$ has only two connected components
$\overline{T_1}$ and $\widetilde{T_1}$. $\overline{T_1}$ only divides the $2\times2$ neighbor cells in $\mathscr{T}^0$,
while $\widetilde{T_1}$ does not. Correspondingly, $T$ also has two connected components; say they are $\overline{T}$ and
$\widetilde{T}$. $T'$ is divided into two parts; say they are $\overline{T'}$ and $\widetilde{T'}$, where
$\overline{T'}\subseteq \overline{T}$, $\widetilde{T'}\subseteq \widetilde{T}$.

Because $\dim W[\widetilde{T_1}]=\widetilde{V}_1^+-2\widetilde{E}_1$, where $\widetilde{V}_1^+,\widetilde{E}_1$ are the numbers of the
crossing-vertices
and the l-edges of $\widetilde{T_1}$, respectively; according to Lemma \ref{surject1}, we have $\dim W[\widetilde{T}]=\dim W[\widetilde{T_1}]+\dim W[\widetilde{T'}]$.
For $\overline{T_1}$, we consider the mesh $\mathscr{T}'$(Figure \ref{well}.b is the mesh excluding $\overline{T'}$) generated by $\overline{T}$ and the edges of the cells of $\mathscr{T}^0$ divided by $\overline{T_1}$. Then, we have
\begin{align*}
1+\dim W[\overline{T'}] =  \dim \overline{\mathbf{S}}^3(\mathscr{T}')
                       \leqslant  \dim W[\overline{T}]
                       \leqslant  \dim W[\overline{T'}]+\dim W[\overline{T_1}]
                       =  \dim W[\overline{T'}]+1.
\end{align*}
That is, $\dim W[\overline{T}]=\dim W[\overline{T'}]+1=\dim W[\overline{T'}]+\dim W[\overline{T_1}]$, which proves the lemma.
\end{proof}

\begin{theorem}
\label{thm6.4}
Suppose $\mathscr{T}$ is a hierarchical T-mesh with $V^+$ crossing-vertices, $E$ interior l-edges, and any l-edge $t$ of $T$ satisfies $N(t)\geqslant 2$. There are $\delta_{4i}$ connected components in $T_i$ that only divide the $2\times2$ neighbor cells in $\mathscr{T}^{i-1}$. Let $\delta_4=\sum_{i=1}^n\delta_{4i}$. If $\mathscr{T}^0$ has at least $4$ horizontal l-edges and $4$ vertical l-edges, then
$$
\dim \overline{\mathbf{S}}^3(\mathscr{T})= V^+-2E+4+\delta_4.
$$
\end{theorem}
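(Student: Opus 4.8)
The plan is to follow \emph{verbatim} the architecture of the degree-$2$ argument in Theorem~\ref{hbc2}, substituting each degree-$2$ ingredient by its degree-$3$ counterpart established above. First I would peel off the coarsest level using Lemma~\ref{tensor} with the tensor-product submesh $\mathscr{T}_{\bigotimes}=\mathscr{T}^0$. Since $\mathscr{T}^0$ is the level-$0$ tensor mesh, its l-edges are exactly the level-$0$ cross-cuts and boundary l-edges, so the complementary l-edge set $L$ of Lemma~\ref{tensor} is precisely $T=T_1^o\cup\cdots\cup T_n^o$. The standing hypothesis that $\mathscr{T}^0$ has at least $4$ horizontal and $4$ vertical l-edges supplies the requirement $d+1=4$ in each direction, so Lemma~\ref{tensor} applies and yields
$$
\dim \overline{\mathbf{S}}^3(\mathscr{T})=\dim \overline{\mathbf{S}}^3(\mathscr{T}^0)+\dim W[T].
$$

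Next I would resolve $\dim W[T]$ level by level. This is exactly the place where the hypothesis $N(t)\geqslant 2$ for every l-edge $t$ of $T$ is used: under it, Lemma~\ref{++} delivers the additivity $\dim W[T]=\sum_{i=1}^n \dim W[T_i]$. Each summand is then evaluated by Lemma~\ref{dimT_i3} as $\dim W[T_i]=V_i^+-2E_i+\delta_{4i}$ (which already incorporates the connected-component splitting of Lemma~\ref{lemma:1}), and the base term is supplied by Lemma~\ref{t0_3} as $\dim \overline{\mathbf{S}}^3(\mathscr{T}^0)=V_0^+-2E_0+4$. Assembling these and using $V^+=\sum_{i=0}^n V_i^+$, $E=\sum_{i=0}^n E_i$, $\delta_4=\sum_{i=1}^n\delta_{4i}$ gives
\begin{align*}
\dim \overline{\mathbf{S}}^3(\mathscr{T}) &= \dim \overline{\mathbf{S}}^3(\mathscr{T}^0)+\sum_{i=1}^n \dim W[T_i] \\
&= V_0^+-2E_0+4+\sum_{i=1}^n\bigl(V_i^+-2E_i+\delta_{4i}\bigr) \\
&= V^+-2E+4+\delta_4,
\end{align*}
which is the claimed formula.

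The genuinely delicate point—and the reason the theorem must carry the restriction $N(t)\geqslant 2$—is the cross-level additivity $\dim W[T]=\dim W[T_1]+\dim W[T']$ inside Lemma~\ref{++}, rather than the final bookkeeping. The example in Figure~\ref{nolevel} shows that without $N(t)\geqslant 2$ one can have $\dim W[T_1]=0$, $\dim W[T_2]=6$, yet $\dim W[T]\leqslant 5$, so the level-by-level splitting genuinely fails; thus the hard part is ensuring the surjectivity hypothesis of Lemma~\ref{surject1}/Lemma~\ref{2.3} that makes Lemma~\ref{++} go through, together with the isolated analysis (via \cite{Wu13}) of the one exceptional configuration in which a connected component subdivides only a $2\times2$ block of neighboring cells and contributes the extra $+1$ recorded by $\delta_{4i}$. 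Once those lemmas are in hand, the only verification left in assembling the theorem is that the hypotheses of Lemmas~\ref{tensor},~\ref{++},~\ref{dimT_i3} and~\ref{t0_3} hold simultaneously, which they do by the standing assumptions on $\mathscr{T}^0$ and on $N(t)$.
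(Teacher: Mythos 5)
Your proposal is correct and follows essentially the same route as the paper: the paper's proof of this theorem is exactly the assembly of Lemma~\ref{tensor} (peeling off $\mathscr{T}^0$), Lemma~\ref{++} (level-by-level additivity under $N(t)\geqslant 2$), Lemma~\ref{dimT_i3} (per-level count $V_i^+-2E_i+\delta_{4i}$), and Lemma~\ref{t0_3} (the tensor-product base case), combined by the same bookkeeping you wrote out. Your closing remarks correctly locate the real difficulty in Lemma~\ref{++} rather than in this final assembly, which matches the paper's organization.
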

\begin{proof}According to Lemma~\ref{t0_3}, Lemma~\ref{dimT_i3}, Lemma~\ref{tensor} and Lemma~\ref{++}, we have
\begin{align*}
\dim \overline{\mathbf{S}}^3(\mathscr{T}) &= \dim \overline{\mathbf{S}}^3(\mathscr{T}^0)+\dim W[T]  \\
 &= V_0^+-2E_0+4 + \sum_{i=1}^{n} \dim W[T_{i}]  \\
 &=V_0^+-2E_0+4+ \sum_{i=1}^{n} (V_i^{+}-2E_i+\delta_{4i})  \\
 &= V^+-2E+\delta_4+4.
\end{align*}
\end{proof}

Similar to Theorem \ref{nonhbc2}, we have the following theorem.
\begin{theorem}Suppose $\mathscr{T}$ is a hierarchical T-mesh with $V^+$ crossing-vertices, $V^b$ boundary vertices,
$E$ interior l-edges, and any l-edge $t$ of $T$ satisfies $N(t)\geqslant 2$. There are $\delta_{4i}$ connected components
in $T_i$ that only divide the $2\times2$ interior neighbor cells in $\mathscr{T}^{i-1}$. Let $\delta_4=\sum_{i=1}^n\delta_{4i}$. Then
$$
\dim {\mathbf{S}}^3(\mathscr{T})= 3V^b+V^+-2E+4+\delta_4.
$$
\end{theorem}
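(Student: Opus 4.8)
The plan is to reduce the non-homogeneous spline space $\mathbf{S}^3(\mathscr{T})$ to a homogeneous spline space over the extended T-mesh $\mathscr{T}^\varepsilon$ by invoking Theorem~\ref{extended}, which gives $\dim \mathbf{S}^3(\mathscr{T}) = \dim \overline{\mathbf{S}}^3(\mathscr{T}^\varepsilon)$. Once this reduction is made, the strategy exactly parallels the proof of Theorem~\ref{nonhbc2}: I would apply the already-proven Theorem~\ref{thm6.4} to $\mathscr{T}^\varepsilon$ and then translate the topological quantities of $\mathscr{T}^\varepsilon$ back into those of $\mathscr{T}$.

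The first concrete step is to count the elements of $\mathscr{T}^\varepsilon$ in terms of the elements of $\mathscr{T}$. Recall that the extended mesh is formed by copying each horizontal boundary l-edge $d = 3$ times, each vertical boundary l-edge $3$ times, and extending all interior l-edges that reach the boundary. I expect that each boundary vertex of $\mathscr{T}$ contributes new crossing-vertices in $\mathscr{T}^\varepsilon$ (the extension turns boundary vertices into genuine interior crossings of the extended grid); by analogy with the biquadratic count $2V^b + V^+ + 8$ in Theorem~\ref{nonhbc2}, I anticipate the crossing-vertex count for the bicubic case to take the form $3V^b + V^+ + c$ for an explicit constant $c$ coming from the corner structure of the extended frame, while the interior l-edge count becomes $E + c'$ for a corresponding constant. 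The constants $c, c'$ must be chosen so that when substituted into the formula $V^+ - 2E + 4 + \delta_4$ of Theorem~\ref{thm6.4} they contribute a net $+4 - 2\cdot c' + c$ that, combined with $3V^b$, reproduces the claimed $3V^b + V^+ - 2E + 4 + \delta_4$. I would verify that the $\delta_{4i}$ data is preserved: since extension only adds vertices to existing l-edges and never alters which cells a level-$i$ connected component subdivides in the interior, the count of connected components dividing $2\times 2$ interior neighbor cells is unchanged, so $\delta_4$ carries over verbatim.

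The second step is to check that the hypotheses of Theorem~\ref{thm6.4} are actually met by $\mathscr{T}^\varepsilon$. Two conditions need attention. First, $(\mathscr{T}^\varepsilon)^0$ must have at least $4$ horizontal and $4$ vertical l-edges; this is automatic because the extension procedure copies each boundary l-edge $3$ times and thus manufactures the required padding of grid lines. Second, every l-edge $t$ of the extended mesh must satisfy $N(t) \geqslant 2$. As noted in the proof of Theorem~\ref{nonhbc2}, after extending, every l-edge of $(\mathscr{T}^\varepsilon)^i$ acquires more vertices than the original l-edge, so the $N(t)\geqslant 2$ requirement transfers from $\mathscr{T}$ to $\mathscr{T}^\varepsilon$; I would record that the hypothesis $N(t)\geqslant 2$ on $\mathscr{T}$ guarantees the corresponding hypothesis on $\mathscr{T}^\varepsilon$. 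I should also note, as the paper does for the biquadratic case, that $\mathscr{T}^\varepsilon$ need not be a genuine $2\times 2$-division hierarchical T-mesh if boundary cells of $\mathscr{T}$ were refined, but that this does not obstruct the argument because Theorem~\ref{thm6.4} only uses the reasonable-order and connected-component structure, both of which survive the extension.

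The main obstacle I expect is the bookkeeping in the first step: getting the additive constants in the crossing-vertex count and the interior-l-edge count of $\mathscr{T}^\varepsilon$ exactly right, including the contribution of the four corners and the careful accounting of how the tripled boundary l-edges and the extended rays interact. This is a purely combinatorial verification — tracing through the extension diagram (cf.\ Figure~\ref{extend}) vertex by vertex and l-edge by l-edge — but it is the step where an off-by-a-constant error would propagate into the final formula. Once the counts $3V^b + V^+ + c$ and $E + c'$ are pinned down and shown to satisfy $c - 2c' = 0$ so the constant term remains $4$, substituting into Theorem~\ref{thm6.4} and simplifying completes the proof.
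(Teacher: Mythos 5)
Your proposal is correct and is essentially the paper's own argument: the paper proves this theorem exactly by citing Theorem~\ref{extended} to pass to $\mathscr{T}^{\varepsilon}$ and then applying Theorem~\ref{thm6.4} there, just as in the proof of Theorem~\ref{nonhbc2}. The bookkeeping you deferred works out as you predicted: $\mathscr{T}^{\varepsilon}$ has $3V^b+V^++24$ crossing-vertices and $E+12$ interior l-edges (so $c=24$, $c'=12$, and $c-2c'=0$), while $\delta_4$ and the hypotheses of Theorem~\ref{thm6.4} carry over for the reasons you gave.
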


Now, we give an example for the dimension computation.

\begin{example}
In Figure \ref{examplefordim1}, there are 20 crossing-vertices (labeled with ``$\bullet$"),
16 boundary vertices (labeled with ``$\circ$"), and 12 interior l-edges (labeled with dashed lines).
$\delta_{42}=1$, $\delta_{41}=0$. Hence,
$\dim {\mathbf{S}}^3(\mathscr{T})=3\times 16+20-2\times 12+4+1=49$.
\end{example}

\begin{figure}[!ht]
\begin{center}
\psset{unit=0.75cm,linewidth=0.8pt}
\begin{pspicture}(0,0)(3,3)
\psline(0,0)(3,0)(3,3)(0,3)(0,0)
\psset{linestyle=dashed}
\psline(0,1)(3,1)\psline(0,2)(3,2)\psline(1,0)(1,3)\psline(2,0)(2,3)
\psline(1,1.5)(3,1.5)\psline(1,2.5)(3,2.5)\psline(1.5,1)(1.5,3)\psline(2.5,1)(2.5,3)
\psline(1.25,1)(1.25,2)\psline(1.75,1)(1.75,2)\psline(1,1.25)(2,1.25)\psline(1,1.75)(2,1.75)
\psdots[dotscale=0.7](1,1)(2,1)(1.25,1.25)(1.5,1.25)(1.75,1.25)(1.25,1.5)(1.5,1.5)(1.75,1.5)(2,1.5)(2.5,1.5)
(1.25,1.75)(1.5,1.75)(1.75,1.75)(1,2)(1.5,2)(2,2)(2.5,2)(1.5,2.5)(2,2.5)(2.5,2.5)
\psdots[dotscale=0.7,dotstyle=o](0,0)(1,0)(2,0)(3,0)(0,1)(3,1)(3,1.5)(0,2)(3,2)(3,2.5)(0,3)(1,3)(1.5,3)(2,3)(2.5,3)(3,3)
\end{pspicture}
\caption{A hierarchical T-mesh $\mathscr{T}$. \label{examplefordim1}}
\end{center}
\end{figure}
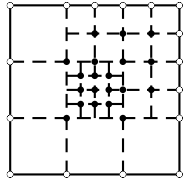
\subsubsection{The topological explanation}

In \cite{Deng13}, the following structure is introduced to propose a topological explanation to the dimension formula.
\begin{definition}[\cite{Deng13}]
Given a hierarchical T-mesh $\mathscr{T}$, we can construct a graph $\mathscr{G}$ by retaining the crossing-vertices and the edges with two end points that are crossing-vertices and removing the other vertices and the edges in $\mathscr{T}$. $\mathscr{G}$ is called the
crossing-vertex-relationship graph (CVR graph for short) of $\mathscr{T}$.
\end{definition}

\begin{figure}[!ht]
\begin{center}
\psset{unit=0.75cm,linewidth=0.8pt}
\scriptsize
\begin{tabular}{c@{\hspace{1.3cm}}c}
\begin{pspicture}(0,0)(5,4)
\psline(0,0)(5,0)(5,4)(0,4)(0,0)\psline(0,0.5)(2,0.5)\psline(0,1)(5,1)\psline(0,1.5)(3,1.5)\psline(0,2)(5,2)\psline(2,2.5)(5,2.5)\psline(0,3)(5,3)
\psline(3,3.5)(5,3.5)\psline(0.5,0)(0.5,2)\psline(1,0)(1,4)\psline(1.5,0)(1.5,2)\psline(2,0)(2,4)\psline(2.5,1)(2.5,3)\psline(3,0)(3,4)
\psline(3.5,2)(3.5,4)\psline(4,0)(4,4)\psline(4.5,2)(4.5,4)
\end{pspicture}  &
\begin{pspicture}(0,0)(5,4)
\psline(0.5,0.5)(1.5,0.5)\psline(0.5,1)(4,1)\psline(0.5,1.5)(2.5,1.5)\psline(1,2)(4,2)\psline(2.5,2.5)(4.5,2.5)\psline(1,3)(4.5,3)
\psline(3.5,3.5)(4.5,3.5)\psline(0.5,0.5)(0.5,1.5)\psline(1,0.5)(1,3)\psline(1.5,0.5)(1.5,1.5)\psline(2,1)(2,3)\psline(2.5,1.5)(2.5,2.5)\psline(3,1)(3,3)
\psline(3.5,2.5)(3.5,3.5)\psline(4,1)(4,3.5)\psline(4.5,2.5)(4.5,3.5)\psdots[dotscale=0.75](0.5,1)(0.5,1.5)(1,1.5)(1.5,1.5)(2,1.5)(2.5,1.5)\rput(0.3,1){$1$}
\rput(0.3,1.5){$2$}\rput(0.8,1.7){$3$}\rput(1.5,1.7){$T$}\rput(2.2,1.3){$+$}\rput(2.7,1.5){$L$}
\end{pspicture} \\
$\mathscr{T}$ & CVR graph of $\mathscr{T}$
\end{tabular}
\caption{A hierarchical T-mesh and its CVR graph. \label{cvr}}
\end{center}
\end{figure}
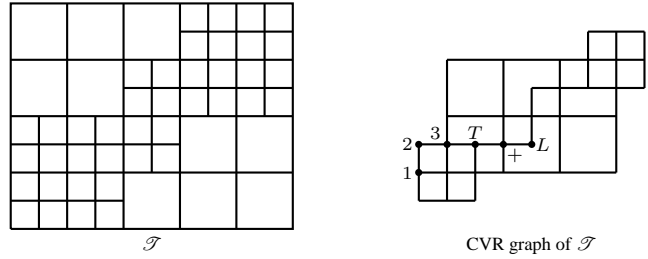

It should be noted that, when $N(t)\geqslant 2$ for any l-edge of $t$ in a hierarchical T-mesh $\mathscr{T}$, the CVR graph of $\mathscr{T}$ is connected.
See Figure~\ref{cvr} for an example. In addition, in \cite{Deng13}, the following conjecture is stated:
\begin{conjecture}
\label{conjecture}
Suppose $\mathscr{T}$ is a hierarchical T-mesh whose CVR graph is $\mathscr{G}$. When $m, n
\geqslant 2$, it follows that
$$\dim \overline{\mathbf{S}}(m, n, m-1, n-1, \mathscr{T}) = \dim
\overline{\mathbf{S}}(m-2, n-2,m-3, n-3, \mathscr{G}),$$
where $\overline{\mathbf{S}}(m,n,\alpha,\beta,\mathscr{G})$
is defined in a similar way to the spline space over a T-mesh.
\end{conjecture}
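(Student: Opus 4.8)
The plan is to reduce both sides of the asserted equality to dimensions of conformality vector spaces and then to match them. On the left, the isomorphism $\overline{\mathbf{S}}^d(\mathscr{T})\cong W[\mathscr{T}]$ recalled after Theorem \ref{extended} expresses the dimension as the nullity of the conformality matrix of all l-edges of $\mathscr{T}$, with cofactor equations of bi-degree $(m,n)$. An analogous isomorphism on $\mathscr{G}$, with cofactor equations of bi-degree $(m-2,n-2)$, would express the right-hand side as the nullity of the conformality matrix built from the crossing-vertices of $\mathscr{T}$ (the vertices of $\mathscr{G}$) and the crossing-vertex edges (the l-edges of $\mathscr{G}$). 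The conjecture is then equivalent to the statement that these two systems have equal nullity, despite living over different meshes and at degrees differing by $(2,2)$. A useful heuristic for the degree shift is the operator $\partial_x^2\partial_y^2$, which sends $\overline{\mathbf{S}}(m,n,m-1,n-1,\mathscr{T})$ into $\overline{\mathbf{S}}(m-2,n-2,m-3,n-3,\mathscr{T})$ and preserves the homogeneous boundary condition; it explains why the target has degree and smoothness lowered by two, but because it kills the functions that are affine in each variable and because it does not by itself restrict from $\mathscr{T}$ to the coarser $\mathscr{G}$, it yields only one-sided information and cannot close the argument alone.

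I would first settle the base cases $m,n\in\{2,3\}$. Here the right-hand space lies in the stable regime $m-2\geqslant 2(m-3)+1$, so its dimension is given by the classical topological formula of \cite{Deng06}, while the left-hand dimension is supplied by Theorem \ref{hbc2} for $d=2$ and by Theorem \ref{thm6.4} for $d=3$ (under $N(t)\geqslant 2$). In these cases the conjecture collapses to a purely combinatorial identity relating $V^+$, $E$ and the correction terms $\delta_1,\delta_4$ of $\mathscr{T}$ to the vertex, edge and cell counts of $\mathscr{G}$, which I would verify by Euler's formula applied to the planar graph $\mathscr{G}$, using that $\mathscr{G}$ is connected when $N(t)\geqslant 2$ (as noted after the CVR-graph definition).

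For the general statement I would compute each side by the level-by-level machinery of Section \ref{2211}. On the $\mathscr{T}$ side the template is clear: compute $\dim W[T_i]$ for each level with a reasonable order (Corollary \ref{general}) plus a local correction term for the degenerate connected components, and then establish the additivity $\dim W[T]=\sum_{i=1}^{n}\dim W[T_i]$ using the surjectivity Lemmas \ref{surject1} and \ref{surject2} and the tensor reduction Lemma \ref{tensor}, exactly as in Lemma \ref{+} and Lemma \ref{++}. On the $\mathscr{G}$ side I would set up the companion computation at the lowered degree, with the crossing-vertices as vertices and the crossing-vertex edges as l-edges, and then construct a level-preserving bijection between the cofactor unknowns of the two systems that carries one family of conformality constraints onto the other up to the degree shift, so that the nullities coincide. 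The objects deleted in passing to $\mathscr{G}$, namely the trivial and order-one l-edges governed by the explicit relation \eqref{1-order}, should be shown to contribute neither to the rank of the conformality matrix nor to the cell count of $\mathscr{G}$.

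I expect the main obstacle to be exactly the instability regime in which the conjecture lives. The left-hand space is unstable for every $m\geqslant 2$, and for $m,n\geqslant 4$ the right-hand space is unstable as well, since $m-2<2(m-3)+1$; then neither side possesses a closed topological dimension formula, and the dimensions are known to depend on the geometry of the mesh and not only its combinatorics (cf. \cite{Li11,Ber12}). Proving the equality therefore requires showing that the geometric degeneracies of the degree-$(m,n)$ system over $\mathscr{T}$ are in exact correspondence with those of the degree-$(m-2,n-2)$ system over $\mathscr{G}$. A second, closely related difficulty is that the additivity $\dim W[T]=\dim W[T_1]+\dim W[T']$ fails for general hierarchical T-meshes, as Figure \ref{nolevel} already shows for $d=3$: controlling the cross-level coupling of the conformality constraints in the presence of l-edges with $N(t)=1$ is the decisive technical hurdle. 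I would accordingly first prove the conjecture under the hypothesis $N(t)\geqslant 2$, where Theorem \ref{thm6.4} and the connectedness of $\mathscr{G}$ are at hand, and only then attempt to remove that restriction.
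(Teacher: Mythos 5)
This statement is left as a \emph{conjecture} in the paper: the authors prove it only for $m=n=2$ (quoting \cite{Deng13}) and for $m=n=3$ under the extra hypothesis $N(t)\geqslant 2$ for every l-edge (Theorem \ref{cvr=dim}), and the unrestricted case is explicitly deferred to future work. So your proposal should not be judged against a full proof, and your closing assessment is in fact the paper's own: for $m,n\geqslant 4$ both sides sit in the unstable regime, the level additivity $\dim W[T]=\dim W[T_1]+\dim W[T']$ can fail (Figure \ref{nolevel}), and the realistic deliverable is the restricted case, which you rightly propose to settle first.

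For that restricted case your plan follows the same route as the paper's proof of Theorem \ref{cvr=dim} --- compute both sides level by level with the conformality machinery and match --- but two of your mechanisms need repair. First, there is no ``level-preserving bijection between the cofactor unknowns'': $\mathscr{G}$ has strictly fewer vertices than the union of the T l-edges of $\mathscr{T}$ (only crossing-vertices survive), so the two homogeneous systems have different sizes. What the paper matches is the \emph{order} of each l-edge: under $N(t)\geqslant 2$ every l-edge $t_j$ of $T_i$ has at least $5$ vertices, all of which except the two end points are crossing-vertices, so its CVR image $t'_j$ has exactly two fewer vertices, while $d+1$ drops from $4$ to $2$; hence $(V_0-d-1)_+$ and the intersection pattern are both preserved, giving $\dim W[G_i]=\dim W[T_i]$ --- an equality of nullities with no bijection of unknowns behind it. Second, you cannot simply quote the stable-regime formula of \cite{Deng06} for $\overline{\mathbf{S}}(m-2,n-2,m-3,n-3,\mathscr{G})$: that formula concerns ordinary spline spaces over regular T-meshes, whereas here the space carries homogeneous boundary conditions and $\mathscr{G}$ is in general not a regular T-mesh (when a boundary cell of $\mathscr{T}^0$ has been refined, the boundary of $\mathscr{G}$ is not a rectangle). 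The paper avoids this by re-running its own machinery on $\mathscr{G}$: additivity $\dim W[G]=\sum_i\dim W[G_i]$ via Lemmas \ref{surject1} and \ref{surject2}, and a Lemma \ref{tensor}-type reduction $\dim\overline{\mathbf{S}}^1(\mathscr{G})=\dim\overline{\mathbf{S}}^1(\mathscr{G}^0)+\dim W[G]$, together with an extension argument that regularizes $\mathscr{G}$. Your Euler-type counting is not wasted, but in the paper it appears only in the separate topological explanation (Lemma \ref{+4} and the theorem following it, which rewrites $V^+-2E+4+\delta_4$ as $V_{\mathscr{G}}^{+}-V_{\mathscr{G}}^{\mathrm{L}}+\delta_4$); it is not what proves the equality of the two spline space dimensions.
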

In \cite{Deng13}, the conjecture is proved to hold as $m=n=2$. Now, we will prove that it also holds
as $m=n=3$ over a hierarchical T-mesh specified in Theorem \ref{thm6.4}.

In $\mathscr{G}$, we can define boundary vertex, interior vertex, crossing-vertex, T-junction, edge, l-edge, and so on in the
same way as in $\mathscr{T}$. We use $V_{\mathscr{G}}$ to denote the number of the vertices of the CVR graph. Therefore,
$V_{\mathscr{G}}=V^+$. In $\mathscr{G}$, a vertex is the intersection of two l-edges. We distinguish the vertices of $\mathscr{G}$
into six types as follows:
\begin{enumerate}
\item Type 1: a boundary vertex of $\mathscr{G}$, which is an end point of an l-edge and an interior vertex of another l-edge,
\item Type 2: a boundary vertex of $\mathscr{G}$, which is the end points of two l-edges,
\item Type 3: a boundary vertex of $\mathscr{G}$, which is the interior vertices of two l-edges,
\item Type T: an interior vertex of $\mathscr{G}$, which is a  T-junction of $\mathscr{G}$,
\item Type $+$: an interior vertex of $\mathscr{G}$, which is a crossing-vertex of $\mathscr{G}$,
\item Type L: an interior vertex of $\mathscr{G}$, which is the end points of two l-edges.
\end{enumerate}
The six types of vertices have been marked in Figure \ref{cvr}. We use
$V_{\mathscr{G}}^1,V_{\mathscr{G}}^2,V_{\mathscr{G}}^3,V_{\mathscr{G}}^{\mathrm{T}},V_{\mathscr{G}}^{+},V_{\mathscr{G}}^{\mathrm{L}}$
to denote the numbers of the six types of vertices, respectively. 

\begin{lemma}\label{+4}
$V_{\mathscr{G}}^{2}=V_{\mathscr{G}}^{3}+4$.
\end{lemma}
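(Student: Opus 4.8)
The plan is to reinterpret the three kinds of boundary vertices as the three kinds of corners of the rectilinear region $\Omega_{\mathscr{G}}$ bounded by $\mathscr{G}$, and then to invoke the elementary fact that a simple rectilinear polygon has exactly four more convex corners than reflex corners.

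First I would record the local structure that underlies the six-type classification: every vertex of $\mathscr{G}$ is the intersection of exactly one horizontal and one vertical l-edge, and with respect to each of these two l-edges it is either an endpoint or an interior point. Crossing this binary ``role'' data with the distinction boundary/interior of $\mathscr{G}$ reproduces exactly the six types, and in particular the boundary vertices of $\mathscr{G}$ are precisely those of Type $1$, $2$, and $3$. Hence it suffices to account for these three types along the boundary curve $\partial\Omega_{\mathscr{G}}$.

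Next I would classify each boundary vertex $v$ by the occupancy of the four cells (NE, NW, SW, SE) surrounding it, reading off which of the four arms of $\mathscr{G}$ at $v$ are present. A Type $2$ vertex is an endpoint of both its l-edges, so only two perpendicular arms are present and exactly one surrounding cell lies in $\Omega_{\mathscr{G}}$; this is a convex ($90^\circ$) corner. A Type $3$ vertex is interior to both l-edges, so all four arms are present; since $v$ is a boundary vertex but no arm may dangle outside $\Omega_{\mathscr{G}}$, exactly three of the four cells lie inside and one outside, which is a reflex ($270^\circ$) corner. A Type $1$ vertex is interior to one l-edge and an endpoint of the other, giving three arms in a ``T'': the boundary runs straight through $v$ while an interior edge spurs inward, i.e. a straight ($180^\circ$) point that is not a turning corner. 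Thus Type $2\leftrightarrow$ convex, Type $3\leftrightarrow$ reflex, Type $1\leftrightarrow$ straight.

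Finally I would traverse $\partial\Omega_{\mathscr{G}}$ once counterclockwise: the total turning of a simple closed rectilinear curve is $360^\circ$, each Type $2$ vertex turning $+90^\circ$, each Type $3$ vertex turning $-90^\circ$, and each Type $1$ vertex contributing $0$. This gives $90^\circ\,(V_{\mathscr{G}}^{2}-V_{\mathscr{G}}^{3})=360^\circ$, which is the claimed identity $V_{\mathscr{G}}^{2}=V_{\mathscr{G}}^{3}+4$.

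The main obstacle is the geometric input that $\Omega_{\mathscr{G}}$ is a single, simply connected (hole-free) rectilinear region, since only then does the traversal yield total turning exactly $360^\circ$; a region with $h$ holes would give $4-4h$ instead. I would justify hole-freeness from the connectedness of $\mathscr{G}$ guaranteed by the hypothesis $N(t)\geqslant 2$ together with the nested level structure of the hierarchical T-mesh, and I would also note the mild non-degeneracy (no pinch point where $\Omega_{\mathscr{G}}$ meets itself at a single vertex) needed to exclude the ambiguous Type $3$ configuration with two opposite cells absent.
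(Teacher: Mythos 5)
Your proof is essentially the same as the paper's: both arguments traverse the boundary of the region bounded by $\mathscr{G}$ exactly once, identify Type $2$ vertices as $90^\circ$ turns in one direction and Type $3$ vertices as $90^\circ$ turns in the other (Type $1$ contributing nothing), and conclude from a total turning of $360^\circ$ that $90\,(V_{\mathscr{G}}^{2}-V_{\mathscr{G}}^{3})=360$. Your added care about simple-connectedness (excluding holes and pinch points) addresses a point the paper glosses over by merely asserting that $\mathscr{G}$ is connected, but it is a refinement of the same traversal argument rather than a different route.
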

\begin{proof}
The CVR graph $\mathscr{G}$ is connected.
We choose a vertex $v$ as the starting point on the boundary of $\mathscr{G}$, and run through the boundary of $\mathscr{G}$ in the clockwise direction.
When we meet a vertex of type $2$, the direction rotates $90^\circ$ clockwise; and when we meet a vertex of type $3$, the direction rotates
$90^\circ$ anti-clockwise. Finally, we return to $v$, and the direction rotates $360^\circ$ clockwise. Therefore,
$$
90V_{\mathscr{G}}^{2}-90V_{\mathscr{G}}^{3}=360.
$$
That is $V_{\mathscr{G}}^{2}=V_{\mathscr{G}}^{3}+4.$
\end{proof}

\begin{theorem}
Suppose $\mathscr{T}$ is a hierarchical T-mesh with $N(t)\geqslant 2$ for any l-edge $t$, and its CVR graph
is $\mathscr{G}$. There are $\delta_{4i}$ connected components as in Figure \ref{well}.a
in $T_i$. $\delta_4=\sum_{i=1}^n\delta_{4i}$.  Then
$$
\dim \overline{\mathbf{S}}^3(\mathscr{T})= V_{\mathscr{G}}^{+}-V_{\mathscr{G}}^{L}+\delta_4.
$$
\end{theorem}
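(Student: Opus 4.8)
The plan is to reduce the statement to a single combinatorial identity and then feed that identity the two facts already in hand: the dimension formula of Theorem~\ref{thm6.4} and the corner count of Lemma~\ref{+4}. Since Theorem~\ref{thm6.4} already gives $\dim \overline{\mathbf{S}}^3(\mathscr{T}) = V^+ - 2E + 4 + \delta_4$ under exactly the present hypotheses ($N(t)\geqslant 2$ for every l-edge of $T$, and $\mathscr{T}^0$ large enough), it suffices to prove the purely topological identity
\[
V^+ - 2E + 4 = V_{\mathscr{G}}^{+} - V_{\mathscr{G}}^{\mathrm{L}},
\]
after which the two $\delta_4$ terms match and the theorem follows. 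So I would isolate this identity as the only thing left to establish.

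First I would rewrite the left-hand side using $V_{\mathscr{G}} = V^+$ together with the partition $V_{\mathscr{G}} = V_{\mathscr{G}}^1 + V_{\mathscr{G}}^2 + V_{\mathscr{G}}^3 + V_{\mathscr{G}}^{\mathrm{T}} + V_{\mathscr{G}}^{+} + V_{\mathscr{G}}^{\mathrm{L}}$ into the six vertex types. This converts $V^+$ into a sum over types and leaves me needing to express $2E$ in the same vocabulary. The bridge I would use is $E = L_{\mathscr{G}}$, where $L_{\mathscr{G}}$ denotes the number of l-edges of $\mathscr{G}$: I claim that each interior l-edge of $\mathscr{T}$ contributes exactly one l-edge to $\mathscr{G}$ and every l-edge of $\mathscr{G}$ arises this way, so the counts coincide. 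Granting this, I would count l-edge endpoints of $\mathscr{G}$ vertex by vertex — a Type~1 or Type~$\mathrm{T}$ vertex is the endpoint of exactly one l-edge, a Type~2 or Type~$\mathrm{L}$ vertex is the endpoint of two, and a Type~3 or Type~$+$ vertex is interior to both l-edges through it and is the endpoint of none — and summing gives $2L_{\mathscr{G}} = V_{\mathscr{G}}^1 + 2V_{\mathscr{G}}^2 + V_{\mathscr{G}}^{\mathrm{T}} + 2V_{\mathscr{G}}^{\mathrm{L}}$, i.e. the needed expression for $2E$.

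The hard part will be justifying $E = L_{\mathscr{G}}$, and more precisely that every vertex of $\mathscr{G}$ really is the transverse intersection of two genuine l-edges. The danger is an interior T-junction of $\mathscr{T}$ lying strictly between two crossing-vertices on a common l-edge: such a T-junction is deleted in $\mathscr{G}$, severing the run of crossing-vertices and fragmenting one l-edge of $\mathscr{T}$ into several l-edges of $\mathscr{G}$ (which would also create the dangling or pass-through vertices that break the six-type partition and invalidate the turning argument behind Lemma~\ref{+4}). I would therefore use the hierarchical structure together with $N(t)\geqslant 2$ to rule this out, arguing that the thickness forced on refinements by $N(t)\geqslant 2$ keeps the crossing-vertices along each interior l-edge in a single unbroken run, with T-junctions occurring only at the extremities. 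This niceness claim is the delicate point on which the whole argument rests.

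Finally I would assemble the pieces. Substituting the endpoint identity and the six-type partition into the target, the $V_{\mathscr{G}}^1$ and $V_{\mathscr{G}}^{\mathrm{T}}$ terms cancel and what remains is $(V_{\mathscr{G}}^3 - V_{\mathscr{G}}^2 + 4) + V_{\mathscr{G}}^{+} - V_{\mathscr{G}}^{\mathrm{L}}$. Lemma~\ref{+4} gives $V_{\mathscr{G}}^2 = V_{\mathscr{G}}^3 + 4$, so the parenthesized term vanishes and I am left with exactly $V_{\mathscr{G}}^{+} - V_{\mathscr{G}}^{\mathrm{L}}$; restoring the common $\delta_4$ then yields $\dim \overline{\mathbf{S}}^3(\mathscr{T}) = V_{\mathscr{G}}^{+} - V_{\mathscr{G}}^{\mathrm{L}} + \delta_4$, as claimed.
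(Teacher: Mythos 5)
Your proposal follows essentially the same route as the paper's own proof: it reduces the statement via Theorem \ref{thm6.4} to the identity $V^+-2E+4=V_{\mathscr{G}}^{+}-V_{\mathscr{G}}^{\mathrm{L}}$, and then derives that identity from the six-type partition of $V_{\mathscr{G}}=V^+$, the endpoint count $2E=V_{\mathscr{G}}^1+2V_{\mathscr{G}}^2+V_{\mathscr{G}}^{\mathrm{T}}+2V_{\mathscr{G}}^{\mathrm{L}}$ over the l-edges of $\mathscr{G}$, and the cancellation supplied by Lemma \ref{+4}. The only difference is one of explicitness: the identification of the interior l-edges of $\mathscr{T}$ with the l-edges of $\mathscr{G}$ (your claim $E=L_{\mathscr{G}}$, i.e.\ that no interior T-junction fragments a run of crossing-vertices), which you single out as the delicate point still needing justification, is exactly what the paper's proof assumes silently when it writes $2E$ for the endpoint sum.
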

\begin{proof}
The end points of every l-edge in $\mathscr{G}$ could be the types of $1,2,T,L$. Running through all of the l-edges of the CVR graph, we get $2E=V_{\mathscr{G}}^1+2V_{\mathscr{G}}^2+V_{\mathscr{G}}^T+2V_{\mathscr{G}}^L$. Then,
\begin{align*}
V^+-2E+4 &=V_{\mathscr{G}}-V_{\mathscr{G}}^1-2V_{\mathscr{G}}^2-V_{\mathscr{G}}^T-2V_{\mathscr{G}}^L  \\
         &=V_{\mathscr{G}}^{+}-V_{\mathscr{G}}^{L}+V_{\mathscr{G}}^{3}-V_{\mathscr{G}}^{2}+4.
\end{align*}
According to Lemma \ref{+4}, we know the theorem holds.
\end{proof}

Now, we prove Conjecture \ref{conjecture} holds for $\overline{\mathbf{S}}^3(\mathscr{T})$ over a hierarchical T-mesh with
$N(t)\geqslant 2$ for any l-edge $t$.
\begin{theorem}\label{cvr=dim}
Suppose $\mathscr{T}$ is a hierarchical T-mesh with $N(t)\geqslant 2$ for any l-edge $t$. If the CVR graph of $\mathscr{T}$ is $\mathscr{G}$, we have
$$ \dim \overline{\mathbf{S}}^3(\mathscr{T}) = \dim
\overline{\mathbf{S}}^1(\mathscr{G}).$$
\end{theorem}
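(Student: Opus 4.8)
The plan is to compute $\dim \overline{\mathbf{S}}^1(\mathscr{G})$ directly and match it against the value $V_{\mathscr{G}}^{+}-V_{\mathscr{G}}^{L}+\delta_4$ that the previous theorem already assigns to $\dim \overline{\mathbf{S}}^3(\mathscr{T})$. Because the previous theorem and Theorem \ref{thm6.4} are two expressions for the same number, and because $V_{\mathscr{G}}=V^{+}$ while (using $N(t)\geqslant 2$, so that each interior l-edge of $\mathscr{T}$ becomes exactly one l-edge of $\mathscr{G}$) $E_{\mathscr{G}}=E$, it is equivalent to prove $\dim \overline{\mathbf{S}}^1(\mathscr{G}) = V_{\mathscr{G}} - 2E_{\mathscr{G}} + 4 + \delta_4$. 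First I would invoke the isomorphism $\overline{\mathbf{S}}^1(\mathscr{G}) \cong W[\mathscr{G}]$ from Section \ref{exthbc}, so that $\dim \overline{\mathbf{S}}^1(\mathscr{G}) = V_{\mathscr{G}} - \rank M_{\mathscr{G}}$, where $M_{\mathscr{G}}$ is the $d=1$ conformality matrix of $\mathscr{G}$; it carries $2E_{\mathscr{G}}$ rows, two per l-edge since $d+1=2$. The whole problem thus reduces to the single rank identity $\rank M_{\mathscr{G}} = 2E_{\mathscr{G}} - 4 - \delta_4$.

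To establish this identity I would separate a global deficiency of $4$ from a local deficiency of $\delta_4$. For the global part, I would apply Lemma \ref{tensor} with $d=1$ to $\mathscr{G}$ (all the Section 3--4 lemmas hold for an arbitrary T-mesh, not only hierarchical ones): extract a tensor-product submesh $\mathscr{G}_{\bigotimes}$ from the boundary and some cross-cuts, which is possible because $\mathscr{G}$ is connected. A tensor-product mesh has $\dim \overline{\mathbf{S}}^1 = (\text{horizontal}-2)(\text{vertical}-2)$, and expanding this product produces exactly the base $-4$ deficiency. For the local part, I would analyze the remaining l-edges $L$ via a reasonable order (Corollary \ref{general}): for $d=1$ a reasonable order requires each successive l-edge to contribute at least $d+1=2$ new vertices, and this holds everywhere except at the connected components that divide only the $2\times2$ neighbour cells, i.e.\ precisely the $\delta_4$ configurations of Lemma \ref{dimT_i3}. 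Each such exceptional component fails the reasonable order in exactly one slot and hence contributes one extra dimension, a rank drop of $1$, exactly paralleling the computations in Lemma \ref{dimT_i3} and Lemma \ref{++}; summing over them gives the $\delta_4$.

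Finally I would reassemble the pieces as in the previous theorem's proof, combining $\rank M_{\mathscr{G}} = 2E_{\mathscr{G}} - 4 - \delta_4$ with the incidence count $2E_{\mathscr{G}} = V_{\mathscr{G}}^{1} + 2V_{\mathscr{G}}^{2} + V_{\mathscr{G}}^{T} + 2V_{\mathscr{G}}^{L}$ and Lemma \ref{+4} ($V_{\mathscr{G}}^{2} = V_{\mathscr{G}}^{3} + 4$) to rewrite $V_{\mathscr{G}} - 2E_{\mathscr{G}} + 4 + \delta_4$ as $V_{\mathscr{G}}^{+} - V_{\mathscr{G}}^{L} + \delta_4$, which is the value the previous theorem assigns to $\dim \overline{\mathbf{S}}^3(\mathscr{T})$; the two dimensions therefore agree. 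The main obstacle is the rank identity $\rank M_{\mathscr{G}} = 2E_{\mathscr{G}} - 4 - \delta_4$ itself: one must verify that the only dependencies among the $2E_{\mathscr{G}}$ conformality equations are the four global ones carried by the tensor-product skeleton together with one per exceptional $2\times2$ component, and that no further accidental dependencies occur. Controlling this over the non-hierarchical graph $\mathscr{G}$ — in particular matching the exceptional components of $\mathscr{G}$ with the $\delta_4$ components of $\mathscr{T}$ — is the delicate step, and it is exactly where the hypothesis $N(t)\geqslant 2$, which guarantees that $\mathscr{G}$ is connected and that the l-edge correspondence is a bijection, does the real work.
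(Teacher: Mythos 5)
Your opening reduction is fine: with $V_{\mathscr{G}}=V^{+}$ and $E_{\mathscr{G}}=E$ (both legitimate under $N(t)\geqslant 2$), Theorem \ref{thm6.4} makes the theorem equivalent to $\dim\overline{\mathbf{S}}^1(\mathscr{G})=V_{\mathscr{G}}-2E_{\mathscr{G}}+4+\delta_4$, i.e.\ to your rank identity $\rank M_{\mathscr{G}}=2E_{\mathscr{G}}-4-\delta_4$; your final paragraph's detour through the incidence count and Lemma \ref{+4} is then redundant, since it merely re-derives the unlabeled theorem preceding this one. The genuine gaps are in the two steps you offer toward the rank identity. First, Lemma \ref{tensor} cannot be applied to $\mathscr{G}$ as you describe: it requires a regular T-mesh and builds $\mathscr{T}_{\bigotimes}$ from \emph{all} of the boundary l-edges, whereas the CVR graph of a hierarchical T-mesh generally has a staircase, non-rectangular boundary (already visible in Figure \ref{cvr}); connectedness, which is what you invoke, is not the relevant hypothesis. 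The paper has to confront exactly this point: its proof observes that ``the bound of $\mathscr{G}$ may be not a rectangle'' and repairs this with a separate argument in the style of Lemma \ref{surject2}, that is, a genuine generalization of Lemma \ref{tensor} rather than an application of it.

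Second, and more seriously, the local half of your rank identity is asserted rather than proved. That an exceptional component ``fails the reasonable order in exactly one slot and hence contributes one extra dimension, a rank drop of 1'' is not an inference any quoted lemma licenses: when a reasonable order fails, Corollary \ref{general} gives no information, and the inequalities \eqref{equ2.2} and \eqref{lowbound} are one-sided, so pinning the contribution at exactly $1$ requires the two-sided sandwich arguments with auxiliary meshes that the paper carries out in Lemmas \ref{+}, \ref{++} and \ref{+++}. Moreover, treating $\mathscr{G}$ as a flat, ``non-hierarchical'' mesh (your own words) makes even the bookkeeping ill-posed: in the full graph $\mathscr{G}$, the l-edges coming from a level-$i$ exceptional component may be crossed by deeper-level l-edges and thereby acquire extra vertices, so ``the exceptional configurations of $\mathscr{G}$'' cannot be matched with the $\delta_4$ components of $\mathscr{T}$ until the levels have been decoupled. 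That decoupling is the actual content of the paper's proof, whose first sentence is the key observation your proposal lacks: $\mathscr{G}$ inherits the natural level structure of $\mathscr{T}$, so the machinery of Sections \ref{2211} and \ref{3322} can be rerun on $\mathscr{G}$ level by level: (i) $\dim W[G_i]=\dim W[T_i]$, because $N(t)\geqslant 2$ forces every l-edge of $T_i$ to have at least five vertices and its trace in $G_i$ to lose exactly the two T-junction endpoints, which precisely offsets the change of $d+1$ from $4$ to $2$; (ii) $\dim W[G]=\sum_i\dim W[G_i]$, proved via Lemma \ref{surject2} plus an explicit computation for the $2\times2$ configuration (Figure \ref{cvrlevel}); (iii) $\dim\overline{\mathbf{S}}^1(\mathscr{G})=\dim\overline{\mathbf{S}}^1(\mathscr{G}^0)+\dim W[G]$, with $\dim\overline{\mathbf{S}}^1(\mathscr{G}^0)=(m-4)(n-4)=\dim\overline{\mathbf{S}}^3(\mathscr{T}^0)$. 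So the hypothesis $N(t)\geqslant 2$ does its real work in the vertex counts of (i), not merely in guaranteeing connectivity and the l-edge bijection as you suggest.
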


\begin{proof}
$\mathscr{G}$ has a natural hierarchical structure, and we use the same notations in Section \ref{hierarchy}.

First, we have $\dim W[G_i]=\dim W[T_i]$. Suppose all of the l-edges of $T_i$ are $t_1,t_2,\ldots,t_m$. Every vertex of $t_j,j=1,2,\ldots,m$, excluding
the two end points,
is a crossing-vertex. Because $N(t_j)\geqslant 2$, $t_j$ has at least $5$ vertices. Therefore, $G_i$ also has $m$ l-edges $t'_1,t'_2,\ldots,
t'_m$, where $t'_j$ is derived from $t_j$ and has two less vertices than $t_j$. $t'_l$ intersects with $t'_k$ when $t_l$ intersects $t_k$,
where $k \neq l,k,l\in \{1,2,\ldots,m\}$. Using the method similar to the computation of $\dim W[T_i]$, we can get $\dim W[G_i]$ easily,
and we have $\dim W[G_i]=\dim W[T_i]$.

Second, we have $\dim W[G]=\sum_{i=1}^n \dim W[G_i]$, where $G=G_1^o\cup G_2^o\cup\cdots \cup G_n^o$. We can suppose that $n=2$. When $T_1$
only divides the $2\times 2$ neighbor cells in $\mathscr{T}^0$, according to Lemma \ref{surject2}, we should only prove this conclusion
when all of the cells of level $1$ are divided (see Figure \ref{cvrlevel}.a).
Figure \ref{cvrlevel}.b is the corresponding CVR graph, in which $G_2$ consists of the thick lines and $G$ consists of
the thick lines and the dashed lines. From the first part, we know $\dim W[G_1]=1,\dim W[G_2]=24$. To compute $\dim W[G]$, we consider the mesh $\mathscr{G}'$
of Figure \ref{cvrlevel}.b. It is easy to determine that $\dim \overline{\mathbf{S}}^1(\mathscr{G}')=25$. We have $\dim W[G]=\dim \overline{\mathbf{S}}^1(\mathscr{G}')=25$. Therefore, $\dim W[G]= \dim W[G_1]+\dim W[G_2]$. When $T_1$ divides more cells, $G_1$ has a reasonable order,
we also have $\dim W[G]= \dim W[G_1]+\dim W[G_2]$.

Third, we have $\dim \overline{\mathbf{S}}^1(\mathscr{G})=\dim \overline{\mathbf{S}}^1(\mathscr{G}^0)+\dim W[G]$. This is a generalization of Lemma \ref{tensor}.
If any boundary cell of $\mathscr{T}^0$ is divided, the bound of $\mathscr{G}$ may be not a rectangle. In this situation, we can
use the same method as in the second part to change $\mathscr{G}$ into a regular mesh. We omit the proof process here.

Combining the three parts, we have proved this theorem.
\end{proof}

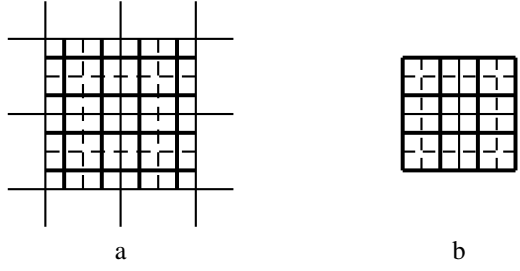
\begin{figure}[!ht]
\begin{center}
\begin{tabular}{c@{\hspace*{1.5cm}}c}
\begin{pspicture}(0,0)(3,3)
\psline(0,0.5)(3,0.5)\psline[linestyle=dashed](0.5,1)(2.5,1)\psline(0,1.5)(3,1.5)\psline[linestyle=dashed](0.5,2)(2.5,2)\psline(0,2.5)(3,2.5)
\psline(0.5,0)(0.5,3)\psline[linestyle=dashed](1,0.5)(1,2.5)\psline(1.5,0)(1.5,3)\psline[linestyle=dashed](2,0.5)(2,2.5)\psline(2.5,0)(2.5,3)
\psline[linewidth=1.5pt](0.5,0.75)(2.5,0.75)\psline[linewidth=1.5pt](0.5,1.25)(2.5,1.25)\psline[linewidth=1.5pt](0.5,1.75)(2.5,1.75)
\psline[linewidth=1.5pt](0.5,2.25)(2.5,2.25)
\psline[linewidth=1.5pt](0.75,0.5)(0.75,2.5)\psline[linewidth=1.5pt](1.25,0.5)(1.25,2.5)\psline[linewidth=1.5pt](1.75,0.5)(1.75,2.5)
\psline[linewidth=1.5pt](2.25,0.5)(2.25,2.5)
\end{pspicture}  &
\begin{pspicture}(0,0)(3,3)
\psline(0.75,1.5)(2.25,1.5)\psline(1.5,0.75)(1.5,2.25)
\psline[linestyle=dashed](0.75,1)(2.25,1)\psline[linestyle=dashed](0.75,2)(2.25,2)
\psline[linestyle=dashed](1,0.75)(1,2.25)\psline[linestyle=dashed](2,0.75)(2,2.25)
\psline[linewidth=1.5pt](0.75,0.75)(2.25,0.75)\psline[linewidth=1.5pt](0.75,1.25)(2.25,1.25)\psline[linewidth=1.5pt](0.75,1.75)(2.25,1.75)
\psline[linewidth=1.5pt](0.75,2.25)(2.25,2.25)
\psline[linewidth=1.5pt](0.75,0.75)(0.75,2.25)\psline[linewidth=1.5pt](1.25,0.75)(1.25,2.25)\psline[linewidth=1.5pt](1.75,0.75)(1.75,2.25)
\psline[linewidth=1.5pt](2.25,0.75)(2.25,2.25)
\end{pspicture} \\
a  & b
\end{tabular}
\caption{Figure for the proof of theorem \ref{cvr=dim}. \label{cvrlevel}}
\end{center}
\end{figure}

\subsection{Hierarchical T-meshes of $3\times 3$ division}\label{3*3}
In this section we consider hierarchical T-meshes of $3\times 3$ division. See Figure~\ref{9} for an example.

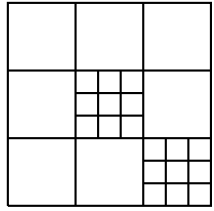
\begin{figure}[!ht]
\begin{center}
\begin{pspicture}(0,0)(3,3)
\psline(0,0)(0,2.7)(2.7,2.7)(2.7,0)(0,0)
\psline(0,0.9)(2.7,0.9)\psline(0,1.8)(2.7,1.8)\psline(0.9,0)(0.9,2.7)\psline(1.8,0)(1.8,2.7)
\psline(1.2,0.9)(1.2,1.8)\psline(1.5,0.9)(1.5,1.8)\psline(0.9,1.2)(1.8,1.2)\psline(0.9,1.5)(1.8,1.5)
\psline(2.1,0)(2.1,0.9)\psline(2.4,0)(2.4,0.9)\psline(1.8,0.3)(2.7,0.3)\psline(1.8,0.6)(2.7,0.6)
\end{pspicture}
\caption{A hierarchical T-mesh of $3\times 3$ division. \label{9}}
\end{center}
\end{figure}

Before discussing the dimension of $W[T_i]$, we first introduce some notations. Suppose $T_i$ only has one connected component,
and all of the l-edges are $t_{11},\ldots,t_{1r},t_{21},\ldots,t_{2s},$ $t_1,\ldots,t_m$, where
$N(t_{1j})=1,N(t_{2k})=2,N(t_l)\geqslant 3,j\in\{1,\ldots,r\},k\in\{1,\dots,s\},$ $l\in\{1,\ldots,m\}$. Then, $T_{i}$ can be
the following several types:
\begin{enumerate}
  \item $m > 0$,
  \item $m = 0, s > 0$. This case includes five possible sub-cases:
  \begin{enumerate}
    \item The connected component refines two cells in the $i-1$-th level, see Figure~\ref{n(t)}.a.
    \item The connected component refines three cells in the $i-1$-th level, see Figure~\ref{n(t)}.b.
    \item The connected component refines four cells in the $i-1$-th level which are the $2\times2$ neighbor cells, see Figure~\ref{n(t)}.c.
    \item The connected component refines four cells in the $i-1$-th level which are not the $2\times2$ neighbor cells, see Figure~\ref{n(t)}.d.
    \item The connected component refines more than four cells in the $i-1$-th level, see Figure~\ref{n(t)}.e.
  \end{enumerate}
  \item $m = s = 0$, see Figure~\ref{n(t)}.f.
\end{enumerate}

\begin{figure}[!ht]
\begin{center}
\begin{tabular}{ccc}
\begin{pspicture}(0,0)(2.4,1.5)
\psline(0,0.3)(2.4,0.3)\psline(0,1.2)(2.4,1.2)\psline(0.3,0)(0.3,1.5)\psline(1.2,0)(1.2,1.5)\psline(2.1,0)(2.1,1.5)
\psset{linestyle=dashed}
\psline(0.6,0.3)(0.6,1.2)\psline(0.9,0.3)(0.9,1.2)\psline(1.5,0.3)(1.5,1.2)\psline(1.8,0.3)(1.8,1.2)
\psline(0.3,0.6)(2.1,0.6)\psline(0.3,0.9)(2.1,0.9)
\end{pspicture} &
\begin{pspicture}(0,0)(2.4,2.4)
\psline(0,0.3)(2.4,0.3)\psline(0,1.2)(2.4,1.2)\psline(0,2.1)(2.4,2.1)\psline(0.3,0)(0.3,2.4)\psline(1.2,0)(1.2,2.4)\psline(2.1,0)(2.1,2.4)
\psset{linestyle=dashed}
\psline(0.6,0.3)(0.6,1.2)\psline(0.9,0.3)(0.9,1.2)\psline(1.5,0.3)(1.5,2.1)\psline(1.8,0.3)(1.8,2.1)
\psline(0.3,0.6)(2.1,0.6)\psline(0.3,0.9)(2.1,0.9)\psline(1.2,1.5)(2.1,1.5)\psline(1.2,1.8)(2.1,1.8)
\end{pspicture} &
\begin{pspicture}(0,0)(2.4,2.4)
\psline(0,0.3)(2.4,0.3)\psline(0,1.2)(2.4,1.2)\psline(0,2.1)(2.4,2.1)\psline(0.3,0)(0.3,2.4)\psline(1.2,0)(1.2,2.4)\psline(2.1,0)(2.1,2.4)
\psset{linestyle=dashed}
\psline(0.6,0.3)(0.6,2.1)\psline(0.9,0.3)(0.9,2.1)\psline(1.5,0.3)(1.5,2.1)\psline(1.8,0.3)(1.8,2.1)
\psline(0.3,0.6)(2.1,0.6)\psline(0.3,0.9)(2.1,0.9)\psline(0.3,1.5)(2.1,1.5)\psline(0.3,1.8)(2.1,1.8)
\end{pspicture} \\
a  & b & c \\
\begin{pspicture}(0,0)(3.3,2.4)
\psline(0,0.3)(3.3,0.3)\psline(0,1.2)(3.3,1.2)\psline(0,2.1)(3.3,2.1)\psline(0.3,0)(0.3,2.4)\psline(1.2,0)(1.2,2.4)\psline(2.1,0)(2.1,2.4)\psline(3,0)(3,2.4)
\psset{linestyle=dashed}
\psline(0.6,0.3)(0.6,1.2)\psline(0.9,0.3)(0.9,1.2)\psline(1.5,0.3)(1.5,2.1)\psline(1.8,0.3)(1.8,2.1)
\psline(0.3,0.6)(2.1,0.6)\psline(0.3,0.9)(2.1,0.9)\psline(1.2,1.5)(3,1.5)\psline(1.2,1.8)(3,1.8)
\psline(2.4,1.2)(2.4,2.1)\psline(2.7,1.2)(2.7,2.1)
\end{pspicture}  &
\begin{pspicture}(0,0)(3.3,3.3)
\psline(0,0.3)(3.3,0.3)\psline(0,1.2)(3.3,1.2)\psline(0,2.1)(3.3,2.1)\psline(0,3)(3.3,3)
\psline(0.3,0)(0.3,3.3)\psline(1.2,0)(1.2,3.3)\psline(2.1,0)(2.1,3.3)\psline(3,0)(3,3.3)
\psset{linestyle=dashed}
\psline(0.6,0.3)(0.6,1.2)\psline(0.9,0.3)(0.9,1.2)\psline(1.5,0.3)(1.5,2.1)\psline(1.8,0.3)(1.8,2.1)
\psline(0.3,0.6)(2.1,0.6)\psline(0.3,0.9)(2.1,0.9)\psline(1.2,1.5)(3,1.5)\psline(1.2,1.8)(3,1.8)
\psline(2.4,1.2)(2.4,3)\psline(2.7,1.2)(2.7,3)\psline(2.1,2.4)(3,2.4)\psline(2.1,2.7)(3,2.7)
\end{pspicture}  &
\begin{pspicture}(0,0)(1.5,1.5)
\psline(0,0.3)(1.5,0.3)\psline(0,1.2)(1.5,1.2)\psline(0.3,0)(0.3,1.5)\psline(1.2,0)(1.2,1.5)
\psset{linestyle=dashed}
\psline(0.6,0.3)(0.6,1.2)\psline(0.9,0.3)(0.9,1.2)
\psline(0.3,0.6)(1.2,0.6)\psline(0.3,0.9)(1.2,0.9)
\end{pspicture} \\
d & e & f
\end{tabular}
\caption{Special cases of $T_i$. \label{n(t)}}
\end{center}
\end{figure}
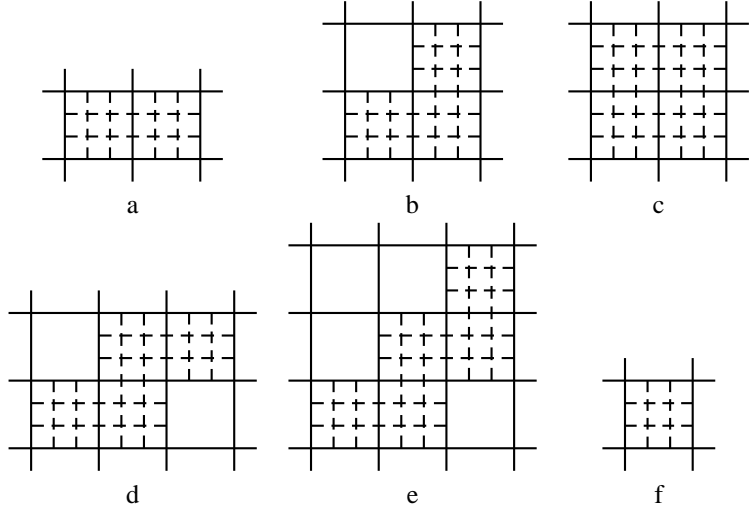

\begin{lemma}\label{dimT_i33}
Suppose $T_i(i\geqslant 1)$ has $\delta_{1i}$ connected components of case 3, $\delta_{2i}$ connected components of case 2.a,
$\delta_{3i}$ connected components of case 2.b and $\delta_{4i}$ connected components of case 2.c. Then $ \dim W[T_i]=V_i^{+}- 2E_i+ 4\delta_{1i} + 2\delta_{2i} + \delta_{3i}+\delta_{4i}$.
\end{lemma}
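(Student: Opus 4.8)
The plan is to mirror the arguments of Lemma \ref{dimT_i} and Lemma \ref{dimT_i3}: decompose $T_i$ into its connected components and evaluate $\dim W$ on each. By Lemma \ref{lemma:1} it suffices to treat a single connected component $T_{i_1}$, for which $V_{i_1}=V_{i_1}^{+}+2E_{i_1}$; the ``generic'' value we are aiming at is therefore $\dim W[T_{i_1}]=V_{i_1}-4E_{i_1}=V_{i_1}^{+}-2E_{i_1}$, and the task is to locate exactly the components where this fails and to measure the excess. First I would record the vertex counts forced by a $3\times3$ division: an l-edge $t$ with $N(t)=1$ carries exactly $d+1=4$ collinear vertices, one with $N(t)=2$ carries $7$, and one with $N(t)\geqslant3$ carries $3N(t)+1\geqslant 10$.

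Next I would peel off the trivial l-edges. Each $t_{1j}$ with $N(t_{1j})=1$ is an l-edge of order $(4-4)_+=0$, so its $4\times4$ Vandermonde system is nonsingular and pins the cofactors of all four of its vertices to zero in every element of $W[T_{i_1}]$. Substituting these zeros eliminates the $t_{1j}$ and their cofactors and leaves the residual homogeneous system carried by the $N(t)\geqslant2$ l-edges alone, so $\dim W[T_{i_1}]$ equals the dimension of this residual system. Whenever a reasonable order exists — in particular when an $N(t)\geqslant3$ l-edge is present (case $1$), since such an l-edge contributes $\geqslant10>4$ fresh vertices and can lead the order ahead of the $N=2$ and $N=1$ l-edges — Corollary \ref{general} gives $\dim W[T_{i_1}]=V_{i_1}^{+}-2E_{i_1}$, i.e.\ no correction. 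The spread-out $N=2$ staircases of cases $2.d$ and $2.e$ do not order directly, but I would reduce them to an orderable configuration by extending their short l-edges via Lemma \ref{surject2} (descending through Lemma \ref{surject1}), again obtaining the defect-free value $V_{i_1}^{+}-2E_{i_1}$.

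It then remains to compute the finitely many compact all-$N=2$ configurations of Figure \ref{n(t)}, namely cases $3$, $2.a$, $2.b$, $2.c$, where no reasonable order exists. For cases $3$ and $2.a$ the elimination above already forces every cofactor to vanish, so $\dim W[T_{i_1}]=0$, which exceeds $V_{i_1}^{+}-2E_{i_1}=-4$ and $-2$ by $4$ and $2$. For cases $2.b$ and $2.c$ I would parametrize the residual system structurally: each surviving $N=2$ l-edge constrains its cofactor vector to lie in the one-dimensional (for $2.b$) or three-dimensional (for $2.c$) kernel of a univariate Vandermonde, and coupling is imposed only at the crossing-vertices shared by a horizontal and a vertical l-edge. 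Solving these coupled univariate relations yields $\dim W[T_{i_1}]=1$ for $2.b$ and $\dim W[T_{i_1}]=9$ (the value $\dim\overline{\mathbf{S}}^3$ of the associated $7\times7$ frame) for $2.c$, i.e.\ corrections $1$ and $1$. Summing the four component types through Lemma \ref{lemma:1} then produces $\dim W[T_i]=V_i^{+}-2E_i+4\delta_{1i}+2\delta_{2i}+\delta_{3i}+\delta_{4i}$.

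The hard part will be these coupled rank computations and, more subtly, the verification that cases $2.d$/$2.e$ really carry no correction. In each of them the dimension is governed by whether certain $2\times2$ determinants built from the subdivision abscissae — minors of the form $\kappa^{\mathrm{low}}_{x}\kappa^{\mathrm{high}}_{x'}-\kappa^{\mathrm{low}}_{x'}\kappa^{\mathrm{high}}_{x}$ in the evaluations of the Vandermonde kernels, together with injectivity of such a kernel under evaluation at the interior abscissae — are nonzero. It is precisely the uniform $3\times3$ subdivision that guarantees these minors do not vanish, so that the rank is exactly as predicted and the count is stable; for an unequal subdivision the rank (hence the dimension) could jump, which is the instability phenomenon noted in the introduction. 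Pinning down this non-degeneracy for each compact configuration is the crux of the proof.
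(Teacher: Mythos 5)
Your overall architecture coincides with the paper's: decomposition into connected components via Lemma \ref{lemma:1}, elimination of the trivial $N=1$ l-edges (your justification of this step, via the nonsingular $4\times4$ Vandermonde systems, is actually cleaner than the paper's bare assertion), a reasonable order in case 1, and the corrections $+4,+2,+1,+1$ for cases 3, 2.a, 2.b, 2.c, whose values you compute correctly. The genuine gap is your treatment of cases 2.d and 2.e, which is exactly where the paper's real work lies. You claim these staircases can be reduced ``to an orderable configuration by extending their short l-edges via Lemma \ref{surject2} (descending through Lemma \ref{surject1})''; this cannot work. After the trivial l-edges are removed, the case 2.d configuration $S_i$ of Figure \ref{comp} consists of six $N=2$ l-edges (four with $5$ vertices, two with $7$) whose intersection pattern contains a $4$-cycle of mutually crossing l-edges; whichever l-edge is placed last in any order retains at most $3$ fresh vertices, so no reasonable order exists. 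Moreover, Lemma \ref{surject1} requires the complementary set to carry a full-rank conformality system, and the hypothesis of Lemma \ref{surject2} is (by Lemma \ref{2.3}) a rank-splitting statement for the \emph{extended} mesh, whose verification requires knowing the dimension of a complement that still contains the $4$-cycle; every attempted extension merely reproduces the problem one step later. In short, the purely combinatorial machinery of Corollary \ref{general} and Lemmas \ref{surject1}, \ref{surject2} can never certify $\dim W[S_i]=2$: the statement is quantitative, not combinatorial.

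The paper settles case 2.d by a two-sided argument that your proposal never carries out. For the lower bound, the projection $W[S_i]\rightarrow W[t]$ onto an order-one l-edge is shown to be surjective (by zeroing the cofactors on $t_1,t_2$ and using that in the case 2.b configuration $S'_i$ every cofactor is nonzero), whence $\dim W[S_i]=1+\dim W[S''_i]\geqslant 2$ by \eqref{lowbound}. For the upper bound, $\{\gamma(v_0),\gamma(v'_0)\}$ is shown to be a determining set: traversing the cycle $v_1,v_2,v_3,v_4$ in $S'''_i$ with Equation \eqref{1-order} yields $\gamma(v_1)=ac\,\gamma(v_1)$ with $0<ac<1$ (here the equal spacings $d,d,d$ produced by the $3\times3$ subdivision enter), so $\dim W[S'''_i]=0$ and $\dim W[S_i]\leqslant 2$. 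Case 2.e is then reduced to case 2.d by peeling order-one l-edges with Lemma \ref{surject1}. Your closing paragraph does identify this non-degeneracy as ``the crux,'' but that concession contradicts your earlier claim that extension arguments give the defect-free value, and the crux itself --- the heart of the paper's proof of Lemma \ref{dimT_i33} --- is left unproved. (A minor additional slip: in case 1 the $N\geqslant3$ l-edges should be placed \emph{last}, not first; Corollary \ref{general} peels from the end of the order, and an $N\geqslant3$ l-edge always retains at least $N+1\geqslant4$ fresh vertices in that position, whereas trivial or $N=2$ l-edges placed after it need not.)
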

\begin{proof}
Suppose $T_i$ only has one connected component. We discuss every case of $T_i$ discussed above.
\begin{enumerate}
  \item $m > 0$:

  First, we should mention that we can remove all of $t_{11},\ldots,t_{1r}$ without changing the dimension. For the other l-edges,
  if $s = 0$, it is obvious that $t_1,\ldots,t_m$ have a reasonable order. If $s>0$, we define the order of the l-edges such that it
  is a reasonable order. First, select any l-edge as $t_{21}$. If there is an l-edge $t$ with $N(t)=2$ intersecting with $t_{21}$, we choose $t$ as $t_{22}$.
  If there is an l-edge $t'$ with $N(t')=2$ intersecting with $t_{22}$, we choose $t'$ as $t_{23}$. Repeat this process until all of
  the l-edges that cross two cells on the upper level have been sorted. In this case, $t_{11},\ldots,t_{1r},t_{21},\ldots,t_{2s},$ $t_1,\ldots,t_m$ is a reasonable order.

  \item $m = 0$, $s > 0$. This case includes five possible sub-cases:
  \begin{enumerate}
    \item The connected component refines two cells in the $i-1$-th level.

    It is evident that $\dim W[T_i]=0=V_i^+-2E_i+2$.

    \item The connected component refines three cells in the $i-1$-th level.

    It is evident that $\dim W[T_i]=1=V_i^+-2E_i+1$.

    \item The connected component refines four cells in the $i-1$-th level that are the $2\times2$ neighbor cells.

    It is evident that $\dim W[T_i]=9=V_i^+-2E_i+1$.

    \item The connected component refines four cells in the $i-1$-th level that are not the $2\times2$ neighbor cells.

    We delete $t_{1r}$ and get the mesh $S_i$ in Figure \ref{comp}.

    First, we prove that the projection mapping $\pi : W[S_i]\rightarrow W[t]$ is surjective. Because $t$ is an l-edge of order one, we only need to verify that the
    co-factors of the vertices on $t$ could be nonzero in $W[S_i]$. Let $W[t_1]=W[t_2]=0$. Then we change $S_i$ to $S'_i$ (see Figure~\ref{comp}) as with the mesh in case 2.b. Because $\dim W[S'_i]=1$, the co-factor of every vertex on $S'_i$ is nonzero. Thus, the mapping $\pi$ is surjective.
    We have $\dim W[S_i]=\dim W[S''_i]+1$, where $S''_i$ is the mesh $S_i\setminus t$ (See Figure~\ref{comp}).
    According to Equation \eqref{lowbound}, we have $\dim W[S''_i]\geqslant \dim W[S'_i]=1$. Therefore, $\dim W[S_i]\geqslant 2$.

    For the upper bound, we let $\gamma(v_0)=\gamma(v'_0)=0$, where $\gamma(v_0),\gamma(v'_0)$ are the co-factors of $v_0$ and $v'_0$, respectively. Then, $S_i$ is changed into $S'''_i$ (see Figure \ref{comp}).
    Suppose the lengths of the four edges of $t_1$ are $l$, $d$, $d$, $d$, respectively. We will prove that $\dim W[S'''_i]=0$.
    Let $\gamma(v_1)=x$; according to Equation \eqref{1-order}, we can obtain that
    $$\gamma(v_2)=ax, \quad  a=-\frac{d+l}{2d+l}, \quad -1<a<0.$$
    Similarly, we have $$\gamma(v_3)=abx,\quad \gamma(v_4)=abcx,\quad b\in \mathbb{R},-1<c<0.$$
    We can get $\gamma(v_1)=acx$ from $\gamma(v_4)$. Thus, we have $x=acx$. Because $0<ac<1$, we get $x=0$, which means that
    $\dim W[S'''_i]=0$. Therefore, $\{\gamma(v_0)$, $\gamma(v'_0)\}$ is the determining set of $W[S_i]$. Then, $\dim W[S_i]\leqslant 2$.

    Combining the former two parts, we have $\dim W[S_i]=2$; and it is easy to check that $\dim W[S_i]=V_i^+-2E_i$.

    \item The connected component refines more than four cells in the $i-1$-th level.\\
    This case is illustrated in Figure~\ref{n(t)}.e. Suppose it refines $p$ connected cells in the $i-1$-th level. As $m = 0$, so
    after removing all of $t_{11},\ldots,t_{1r}$, $T_i$ can be regarded as a mesh that is generated by adding two l-edges of order one to
    the mesh refining $p-1$ connected cells in the $i-1$-th level. For example, Figure~\ref{n(t)}.e is the mesh generated by adding two l-edges of order one
    to the mesh
    $S_i$ in Figure \ref{comp}.
    Continuing this process until the T-mesh follows the case 2.d, and according to Lemma~\ref{surject1}, we know that $\dim W[T_i]=V_i^+-2E_i$.
  \end{enumerate}
  \item $m = s = 0$. \\
  It is evident that $\dim W[T_i]=0=V_i^+-2E_i+4$.
\end{enumerate}
Combining the above discussion, we obtain this lemma.
\end{proof}

\begin{figure}[!ht]
\begin{center}
\begin{tabular}{c@{\hspace{1.3cm}}c}
\begin{pspicture}(0,0)(4.4,3.2)
\psline(0,0.4)(4.4,0.4)\psline(0,1.6)(4.4,1.6)\psline(0,2.8)(4.4,2.8)\psline(0.4,0)(0.4,3.2)\psline(1.6,0)(1.6,3.2)\psline(2.8,0)(2.8,3.2)\psline(4,0)(4,3.2)
\psline{->}(1.8,2.9)(1.8,2.4)\psline{->}(1,1.7)(1,1.2)\psline{->}(1,0.3)(1,0.8)
\psset{linestyle=dashed}
\psline(2,0.4)(2,2.8)\psline(2.4,0.4)(2.4,2.8)
\psline(0.4,0.8)(2.8,0.8)\psline(0.4,1.2)(2.8,1.2)\psline(1.6,2)(4,2)\psline(1.6,2.4)(4,2.4)
\psdots(2,1.2)(2,2)
\rput(1.8,3.05){$t$}\rput(1,1.85){$t_2$}\rput(1,0.15){$t_1$}\rput(1.82,1.02){$v_0$}\rput(1.82,1.82){$v'_0$}
\end{pspicture}  &
\begin{pspicture}(0,0)(4.4,3.2)
\psline(0,0.4)(4.4,0.4)\psline(0,1.6)(4.4,1.6)\psline(0,2.8)(4.4,2.8)\psline(0.4,0)(0.4,3.2)\psline(1.6,0)(1.6,3.2)\psline(2.8,0)(2.8,3.2)\psline(4,0)(4,3.2)
\psline{->}(1.8,2.9)(1.8,2.4)
\psset{linestyle=dashed}
\psline(2,0.4)(2,2.8)\psline(2.4,0.4)(2.4,2.8)
\psline(1.6,2)(4,2)\psline(1.6,2.4)(4,2.4)
\rput(1.8,3.05){$t$}
\end{pspicture} \\
$S_i$ & $S'_i$   \\
\begin{pspicture}(0,0)(4.4,3.2)
\psline(0,0.4)(4.4,0.4)\psline(0,1.6)(4.4,1.6)\psline(0,2.8)(4.4,2.8)\psline(0.4,0)(0.4,3.2)\psline(1.6,0)(1.6,3.2)\psline(2.8,0)(2.8,3.2)\psline(4,0)(4,3.2)
\psline{->}(1,1.7)(1,1.2)\psline{->}(1,0.3)(1,0.8)
\psset{linestyle=dashed}
\psline(2,0.4)(2,2.8)\psline(2.4,0.4)(2.4,2.8)
\psline(0.4,0.8)(2.8,0.8)\psline(0.4,1.2)(2.8,1.2)\psline(1.6,2)(4,2)
\psdots(2,1.2)(2,2)
\rput(1,1.85){$t_2$}\rput(1,0.15){$t_1$}\rput(1.82,1.02){$v_0$}\rput(1.82,1.82){$v'_0$}
\end{pspicture}  &
\begin{pspicture}(0,0)(4.4,3.2)
\psline(0,0.4)(4.4,0.4)\psline(0,1.6)(4.4,1.6)\psline(0,2.8)(4.4,2.8)\psline(0.4,0)(0.4,3.2)\psline(1.6,0)(1.6,3.2)\psline(2.8,0)(2.8,3.2)\psline(4,0)(4,3.2)
\psset{linestyle=dashed}
\psline(2,0.4)(2,2.8)\psline(2.4,0.4)(2.4,2.8)
\psline(0.4,0.8)(2.8,0.8)\psline(1.6,2.4)(4,2.4)
\psdots(2,0.8)(2,2.4)(2.4,0.8)(2.4,2.4)
\rput(1.82,0.98){$v_1$}\rput(2.58,0.98){$v_2$}\rput(2.58,2.22){$v_3$}\rput(1.82,2.22){$v_4$}
\rput(1,0.6){$l$}\rput(1.8,0.6){$d$}\rput(2.2,0.6){$d$}\rput(2.6,0.6){$d$}
\end{pspicture} \\
$S''_i$ & $S'''_i$  \\
\end{tabular}
\caption{Figure for the case 2.d. \label{comp}}
\end{center}
\end{figure}
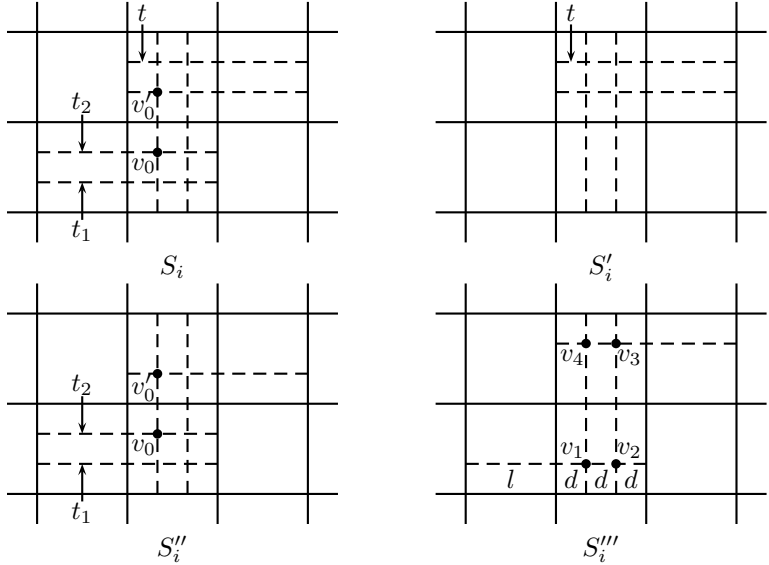

\begin{lemma}\label{+++}
For $\overline{\mathbf{S}}^3(\mathscr{T})$ over a  hierarchical T-mesh of $3\times3$ division, we have
$$
\dim W[T]=\sum_{i=1}^{n}\dim W[T_i].
$$
\end{lemma}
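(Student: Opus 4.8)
The plan is to mirror the strategy of Lemma~\ref{+} and Lemma~\ref{++}: it suffices to establish the one-level splitting $\dim W[T]=\dim W[T_1]+\dim W[T']$ with $T'=T_2^o\cup\cdots\cup T_n^o$, since applying the same argument to the sub-hierarchy of levels $\geqslant 2$ (induction on $\lev(\mathscr{T})$) then gives $\dim W[T']=\sum_{i=2}^n\dim W[T_i]$ and hence the claim. To prove the splitting I would decompose $T_1$ into its connected components and, via Lemma~\ref{lemma:1} and Lemma~\ref{2.3}, work with one connected component $C_1$ of $T_1$ at a time, together with the connected component $C$ of $T$ that contains it and its higher-level part $C'=C\cap T'$. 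It then suffices to prove $\dim W[C]=\dim W[C_1]+\dim W[C']$ for every such $C$ and to sum over the components.

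For the components that admit a reasonable order — the types $m>0$, case 2.d and case 2.e of the classification preceding Lemma~\ref{dimT_i33} — one has $\dim W[C_1]=V_{C_1}^+-2E_{C_1}=V_{C_1}-4E_{C_1}$, so Lemma~\ref{surject1} (with $d=3$ and $L=C'$) applies directly and yields the desired equality. The work lies in the four defective components, namely case~3, case~2.a, case~2.b and case~2.c, for which $\dim W[C_1]$ equals $0$, $0$, $1$ and $9$, respectively. For each of these I would reproduce the sandwich of Lemma~\ref{++}: form the auxiliary mesh $\mathscr{T}'_C$ generated by $C$ together with the boundary edges of the refined level-$(i-1)$ cells, and squeeze
$$\dim W[C_1]+\dim W[C']=\dim\overline{\mathbf{S}}^3(\mathscr{T}'_C)\leqslant\dim W[C]\leqslant\dim W[C']+\dim W[C_1].$$
Here the right-hand inequality is Equation~\eqref{equ2.2}; the left-hand equality follows from Lemma~\ref{tensor} applied to $\mathscr{T}'_C$ with the fully refined block as the tensor-product submesh and $L=C'$, using that the HBC dimension of that block equals $\dim W[C_1]$ (that is, $0,0,1,9$); and the middle inequality follows because the cofactor-restriction map $\overline{\mathbf{S}}^3(\mathscr{T}'_C)\to W[C]$ is injective: any spline killed by it is smooth across every l-edge of $C$, hence reduces to an HBC spline supported on the level-$(i-1)$ block, and that block is at most a $3\times3$ grid of mesh lines, too small to carry a nonzero bidegree-$(3,3)$ HBC spline.

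The main obstacle is the defective case~2.c, the fully refined $2\times2$ block with $\dim W[C_1]=9$: here the sandwich forces the projection $W[C]\to W[C_1]$ to be surjective, so each of the nine independent level-$i$ cofactor patterns must be shown to extend through every finer refinement, and this is where the $3\times3$ subdivision is genuinely needed — for a general $2\times2$-division hierarchy the splitting can already fail, as in Figure~\ref{nolevel}. A secondary technical point is case~2.b, whose refined region is L-shaped rather than rectangular; before invoking Lemma~\ref{tensor} I would first regularize it to a rectangle by extending l-edges and appealing to Lemma~\ref{surject2}, exactly as in the proof of Theorem~\ref{cvr=dim}.
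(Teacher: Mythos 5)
Your proposal is correct and is essentially the paper's own argument: reduce to the one-level splitting $\dim W[T]=\dim W[T_1]+\dim W[T']$, work one connected component at a time, dispatch case 1, case 2.d and case 2.e through Lemma~\ref{surject1}, and squeeze the defective cases 2.a, 2.b, 2.c and 3 between Lemma~\ref{tensor} combined with Equation~\eqref{lowbound} on one side and Equation~\eqref{equ2.2} on the other, exactly as the paper does for its worked case 2.b (your justification of the middle inequality $\dim\overline{\mathbf{S}}^3(\mathscr{T}')\leqslant\dim W[C]$, which the paper only asserts, is the right one). The only deviations are harmless: for the L-shaped case 2.b the paper keeps the L-shape and takes as tensor-product submesh the block boundary, the mid-lines and the four full cross-cuts (HBC dimension $1=\dim W[T_1]$), absorbing the ray-type l-edges of $T_1$ into $L$ via \eqref{lowbound}, whereas you regularize to a rectangle by Lemma~\ref{surject2}; and your phrase ``admit a reasonable order'' is inaccurate for cases 2.d and 2.e (no such order exists there), but the fact actually needed, $\dim W[C_1]=V_{C_1}-4E_{C_1}$ from Lemma~\ref{dimT_i33}, is precisely what you feed into Lemma~\ref{surject1}, so the logic is unaffected.
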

\begin{proof}
Suppose $T_1$ has only one connected component. We will prove this lemma for all of the possible cases of $T_1$.
For case 1, case 2.d, and case 2.e, the conclusion is right according to  Lemma \ref{surject1}.
For case 2.a, case 2.b, case 2.c, and case 3, we can use the same method in the proof of Lemma \ref{dimT_i3}. Here, we only prove
this conclusion for case 2.b. In the same way as in the proof of Lemma \ref{+} and \ref{++}, we construct a new mesh $\mathscr{T}'$
(Figure \ref{last}.a is the mesh excluding $T'$) generated by $T$ and the edges of the $2\times 2$ neighbor cells. We choose the mesh in Figure \ref{last}.b as $\mathscr{T}'_{\bigotimes}$,
and use $L$ to denote the set of all of the l-edges of $T$
not contained in $\mathscr{T}'_{\bigotimes}$. $T'$ is a subset of $L$.
According to Lemma \ref{tensor} and Equation \eqref{lowbound}, we have $$\dim(\mathscr{T}')=1+\dim W[L]
\geqslant 1+\dim W[T'].$$
Conversely, $$\dim(\mathscr{T}')\leqslant \dim W[T]\leqslant \dim W[T_1]+\dim W[T']=1+\dim W[T'].$$
Therefore, $\dim W[T]=1+\dim W[T']
=\dim W[T_1]+\dim W[T']$.
\end{proof}

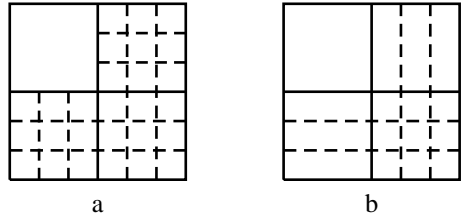
\begin{figure}[!ht]
\begin{center}
\psset{unit=1.3cm,linewidth=1pt}
\begin{tabular}{c@{\hspace{1.3cm}}c}
\begin{pspicture}(0,0)(1.8,1.8)
\psline(0,0)(1.8,0)(1.8,1.8)(0,1.8)(0,0)
\psline(0,0.9)(1.8,0.9)\psline(0.9,0)(0.9,1.8)
\psset{linestyle=dashed}
\psline(0,0.3)(1.8,0.3)\psline(0,0.6)(1.8,0.6)
\psline(1.2,0)(1.2,1.8)\psline(1.5,0)(1.5,1.8)
\psline(0.3,0)(0.3,0.9)\psline(0.6,0)(0.6,0.9)
\psline(0.9,1.2)(1.8,1.2)\psline(0.9,1.5)(1.8,1.5)
\end{pspicture}  &
\begin{pspicture}(0,0)(1.8,1.8)
\psline(0,0)(1.8,0)(1.8,1.8)(0,1.8)(0,0)
\psline(0,0.9)(1.8,0.9)\psline(0.9,0)(0.9,1.8)
\psset{linestyle=dashed}
\psline(0,0.3)(1.8,0.3)\psline(0,0.6)(1.8,0.6)
\psline(1.2,0)(1.2,1.8)\psline(1.5,0)(1.5,1.8)
\end{pspicture} \\
a & b
\end{tabular}
\caption{Figure for the proof of lemma \ref{+++}. \label{last}}
\end{center}
\end{figure}

\begin{theorem}
Suppose $\mathscr{T}$ is a hierarchical T-mesh of $3\times 3$ division with $V^+$ crossing-vertices, $E$ l-edges,
and there are $\delta_{1i},\delta_{2i},\delta_{3i},\delta_{4i}$ connected components of case 2.a, case 2.b, case 2.c, and case 3 in $T_i$, respectively.
$\delta_k=\sum_{i=1}^n \delta_{ki}$, $k=1,2,3,4$. Then, we have
$$
\dim \overline{\mathbf{S}}^3(\mathscr{T})= V^+-2E+4+4\delta_1+2\delta_2+\delta_3+\delta_4.
$$
\end{theorem}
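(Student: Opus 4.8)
The statement is the exact $3\times3$-division counterpart of Theorem~\ref{thm6.4}, so my plan is to reassemble the same four ingredients that yielded the $N(t)\geqslant 2$ formula, but with the refined per-level count of Lemma~\ref{dimT_i33} in place of Lemma~\ref{dimT_i3} and with Lemma~\ref{+++} in place of Lemma~\ref{++}. First I would apply Lemma~\ref{tensor} with $\mathscr{T}_{\bigotimes}=\mathscr{T}^0$ and $L=T$, where $T=T_1^o\cup\cdots\cup T_n^o$. Since a $3\times3$-division hierarchical T-mesh still carries the level-$0$ tensor-product mesh $\mathscr{T}^0$ as its coarsest grid, the size hypothesis needed here (at least $4$ horizontal and $4$ vertical l-edges in $\mathscr{T}^0$, exactly as in Theorem~\ref{thm6.4}) is the one I would assume, and Lemma~\ref{tensor} then gives the splitting
$$
\dim\overline{\mathbf{S}}^3(\mathscr{T})=\dim\overline{\mathbf{S}}^3(\mathscr{T}^0)+\dim W[T].
$$

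Next I would invoke Lemma~\ref{+++} to split the conformality space level by level, $\dim W[T]=\sum_{i=1}^n\dim W[T_i]$, and then substitute the base value $\dim\overline{\mathbf{S}}^3(\mathscr{T}^0)=V_0^{+}-2E_0+4$ from Lemma~\ref{t0_3} together with the per-level count $\dim W[T_i]=V_i^{+}-2E_i+4\delta_{1i}+2\delta_{2i}+\delta_{3i}+\delta_{4i}$ from Lemma~\ref{dimT_i33}. Summing over $i$ and using $V^{+}=\sum_{i=0}^nV_i^{+}$, $E=\sum_{i=0}^nE_i$, and $\delta_k=\sum_{i=1}^n\delta_{ki}$ --- and noting that the level-$0$ tensor mesh hosts no special connected component, so it contributes only the additive constant $4$ --- everything collapses to $V^{+}-2E+4+4\delta_1+2\delta_2+\delta_3+\delta_4$, which is the claimed formula. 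This final arithmetic is identical in spirit to the displayed chain in the proof of Theorem~\ref{thm6.4}.

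At the level of the theorem itself the work is pure bookkeeping; the genuine content has already been deposited in the two cited lemmas, so the only thing I would need to verify is that their hypotheses really hold in this setting. The delicate point is the additivity $\dim W[T]=\sum_i\dim W[T_i]$ supplied by Lemma~\ref{+++}: this is precisely the level-by-level splitting that can fail for general $2\times2$-division meshes, as Figure~\ref{nolevel} illustrates, and it survives for $3\times3$ division only because each connected component of $T_i$ either admits a reasonable order or falls into one of the enumerated exceptional configurations (cases 2.a, 2.b, 2.c, and 3) whose isolated conformality dimensions are the fixed constants that generate the $\delta$-corrections. Hence the main obstacle I anticipate is not any new estimate but rather confirming that every $T_i$ is classified by exactly one of the cases of Lemma~\ref{dimT_i33} and that the coarsest grid is large enough for Lemma~\ref{tensor}; once that is secured, the theorem follows by direct substitution.
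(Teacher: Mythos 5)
Your proposal is correct and follows exactly the route the paper intends: its own proof is just the one-line remark that the theorem ``is directly derived from the above lemmas,'' namely the same assembly of Lemma~\ref{tensor}, Lemma~\ref{+++}, Lemma~\ref{t0_3}, and Lemma~\ref{dimT_i33} that you spell out, mirroring the displayed computation in Theorem~\ref{thm6.4}. Your added caveats (the size hypothesis on $\mathscr{T}^0$ and the completeness of the case classification in Lemma~\ref{dimT_i33}) are exactly the implicit hypotheses the paper leaves unstated.
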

\begin{proof}This is directly derived from the above lemmas.
\end{proof}

Similar to Theorem \ref{nonhbc2}, we have the following theorem.
\begin{theorem}Suppose $\mathscr{T}$ is a hierarchical T-mesh of $3\times3$ division with $V^+$ crossing-vertices, $V^b$ boundary vertices and
$E$ interior l-edges. There are $\delta_{1i},\delta_{2i},\delta_{3i},\delta_{4i}$ connected components of case 2.a, case 2.b, case 2.c, and case 3 in $T^{\varepsilon}_i$,
respectively, where $T^{\varepsilon}_i$ is the i-th level of the extended mesh $\mathscr{T}^{\varepsilon}$. $\delta_k=\sum_{i=1}^n \delta_{ki}$, $k=1,2,3,4$. Then, we have
$$
\dim {\mathbf{S}}^3(\mathscr{T})=3V^b+V^+-2E+4+4\delta_1+2\delta_2+\delta_3+\delta_4.
$$
\end{theorem}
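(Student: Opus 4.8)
The plan is to mirror the proof of Theorem~\ref{nonhbc2}, passing from the space with free boundary to the space with homogeneous boundary conditions on the extended mesh. First I would invoke Theorem~\ref{extended} to replace the target quantity: since $\dim \mathbf{S}^3(\mathscr{T}) = \dim \overline{\mathbf{S}}^3(\mathscr{T}^\varepsilon)$, it suffices to evaluate the homogeneous-boundary dimension on $\mathscr{T}^\varepsilon$, for which the preceding theorem of this subsection already supplies a formula. All that then remains is to express the combinatorial data of $\mathscr{T}^\varepsilon$ in terms of that of $\mathscr{T}$.

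Next I would carry out the bookkeeping for the extended mesh. For $\overline{\mathbf{S}}^3$ the degree is $m=n=3$, so $\mathscr{T}^\varepsilon$ is built by placing three copies of each boundary l-edge outside the domain and extending every l-edge that ends on the boundary. Counting the newly interior l-edges (the four former boundary l-edges together with the interior copies, two on each side) gives $E^\varepsilon = E + 12$. Each boundary vertex of $\mathscr{T}$ acquires $m=3$ crossing-vertices once its perpendicular l-edge is extended through the copied lines, and the mutual intersections of the copied lines around the four corners contribute a fixed constant; a direct count yields $(V^+)^\varepsilon = 3V^b + V^+ + 24$. Note that the values $\delta_{ki}$ appearing in the statement are by definition read off from $T^\varepsilon_i$, so no separate reconciliation of the connected-component counts is needed: they are exactly the quantities fed into the preceding theorem.

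Finally I would substitute. Applying the preceding theorem to $\mathscr{T}^\varepsilon$ gives $\dim \overline{\mathbf{S}}^3(\mathscr{T}^\varepsilon) = (V^+)^\varepsilon - 2E^\varepsilon + 4 + 4\delta_1 + 2\delta_2 + \delta_3 + \delta_4$, and inserting the counts above produces $(3V^b + V^+ + 24) - 2(E+12) + 4 + 4\delta_1 + 2\delta_2 + \delta_3 + \delta_4 = 3V^b + V^+ - 2E + 4 + 4\delta_1 + 2\delta_2 + \delta_3 + \delta_4$, since $24 - 24 = 0$; this is the claimed formula. I expect the main obstacle to be legitimizing the use of the preceding theorem on $\mathscr{T}^\varepsilon$: as already observed for $d=2$, $\mathscr{T}^\varepsilon$ need not itself be a genuine hierarchical T-mesh of $3\times3$ division once a boundary cell of $\mathscr{T}$ has been refined. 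The way around this is the same as before; extension only increases the number of vertices carried by each l-edge, so every reasonable-order and surjectivity argument underlying Lemmas~\ref{dimT_i33} and~\ref{+++} continues to hold, the requirement that $(\mathscr{T}^\varepsilon)^0$ have at least four horizontal and four vertical l-edges is automatic after adding three boundary copies per side, and hence the preceding theorem applies verbatim. Pinning down the crossing-vertex constant $24$ by an explicit corner-by-corner count is the one genuinely fiddly computation, but it is forced by the consistency relation that $(V^+)^\varepsilon - 2E^\varepsilon$ must reproduce $V^+ - 2E$ plus the boundary contribution $3V^b$.
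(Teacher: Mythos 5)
Your proposal is correct and follows essentially the same route as the paper, which proves this theorem exactly as it proves its $d=2$ counterpart (Theorem \ref{nonhbc2}): reduce to $\dim\overline{\mathbf{S}}^3(\mathscr{T}^\varepsilon)$ via Theorem \ref{extended}, count the extended mesh's data, and justify applying the homogeneous-boundary formula even though $\mathscr{T}^\varepsilon$ may fail to be a genuine $3\times3$ hierarchical mesh, since extension only adds vertices to l-edges. Your counts $E^\varepsilon=E+12$ and $(V^+)^\varepsilon=3V^b+V^++24$ (three new crossing-vertices per non-corner boundary vertex, nine per corner) are exactly right, and the constants cancel as you computed.
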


Now, we give an example for the dimension computation.
\begin{example}
In Figure \ref{examplefordim}, $V^+=29$, $V^b=20$, $E=18$, $\delta_{31}=1$. Hence,
$\dim {\mathbf{S}}^3(\mathscr{T})=3\times 20+29-2\times 18+4+1=58$.
\end{example}

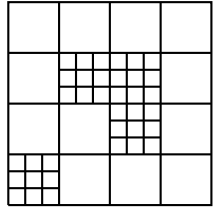
\begin{figure}[!ht]
\begin{center}
\psset{unit=0.75cm,linewidth=0.8pt}
\begin{pspicture}(0,0)(3.6,3.6)
\psline(0,0)(3.6,0)(3.6,3.6)(0,3.6)(0,0)
\psline(0,0.9)(3.6,0.9)\psline(0,1.8)(3.6,1.8)\psline(0,2.7)(3.6,2.7)
\psline(0.9,0)(0.9,3.6)\psline(1.8,0)(1.8,3.6)\psline(2.7,0)(2.7,3.6)
\psline(0.9,2.1)(2.7,2.1)\psline(0.9,2.4)(2.7,2.4)\psline(2.1,0.9)(2.1,2.7)\psline(2.4,0.9)(2.4,2.7)
\psline(1.2,1.8)(1.2,2.7)\psline(1.5,1.8)(1.5,2.7)\psline(1.8,1.2)(2.7,1.2)\psline(1.8,1.5)(2.7,1.5)
\psline(0.3,0)(0.3,0.9)\psline(0.6,0)(0.6,0.9)\psline(0,0.3)(0.9,0.3)\psline(0,0.6)(0.9,0.6)
\end{pspicture}
\caption{A hierarchical T-mesh $\mathscr{T}$. \label{examplefordim}}
\end{center}
\end{figure}
\section{Conclusions and Future Work}

In this paper, the smoothing cofactor-conformality method is explored. We present a new proof of the dimension formula of
$\mathbf{S}(2,2,1,1,\mathscr{T})$ over hierarchical T-meshes and obtain the dimension formula of $\mathbf{S}(3,3,2,2,\mathscr{T})$
over hierarchical T-meshes under the condition that $N(t)\geqslant 2$.
As an application of this method, we
give the dimension formula of $\mathbf{S}(3,3,2,2,\mathscr{T})$ when $\mathscr{T}$ is a hierarchical T-mesh of $3\times 3$ division.

Although most contents in this paper are discussing hierarchical T-meshes, many conclusions also apply to the general T-meshes.
For a general T-mesh, it is not very easy to determine whether all the l-edges have a reasonable order.
Therefore, these conclusions only make sense for some special T-meshes.
For a hierarchical T-mesh, this method is effective when we can compute the dimensions level by level.
Although we only discuss $\mathbf{S}(2,2,1,1,\mathscr{T})$ and $\mathbf{S}(3,3,2,2,\mathscr{T})$,
it is not very difficult to compute $\dim \mathbf{S}(m,n,m-1,n-1,\mathscr{T})$ over hierarchical T-meshes with some restrictions.
However, if we can not compute the dimensions level by level, this method is invalid. We have discussed this phenomenon at the beginning of
Section \ref{3322}.

For the dimension of $\mathbf{S}(3,3,2,2,\mathscr{T})$ over hierarchical T-meshes without any restrictions, the proposed smoothing cofactor-conformality method
 may be invalid. The solution to this problem will be left to future study.

\section*{Acknowledgements}

The authors are supported by a NKBRPC (2011CB302400), the NSF of China (11031007 and 11371341),
and the 111 Project (No. b07033).


\bibliographystyle{elsarticle-num}
\bibliography{survey}

\end{document}